\documentclass[11pt,letterpaper]{amsart}

\usepackage{geometry}
\usepackage{amsmath,amssymb,amsthm,mathtools}
\usepackage{booktabs,tabularx}
\usepackage[colorlinks=true,linkcolor=blue,citecolor=blue,urlcolor=blue]{hyperref}
\usepackage{enumitem}

\newtheorem{theorem}{Theorem}[section]
\newtheorem{proposition}[theorem]{Proposition}
\newtheorem{lemma}[theorem]{Lemma}
\newtheorem{corollary}[theorem]{Corollary}
\theoremstyle{definition}
\newtheorem{definition}[theorem]{Definition}
\theoremstyle{remark}
\newtheorem{remark}[theorem]{Remark}

\newcommand{\cX}{\mathcal X}
\newcommand{\cY}{\mathcal Y}

\newcommand{\cL}{\mathcal L}
\newcommand{\cM}{\mathcal M}
\newcommand{\cA}{\mathcal A}
\newcommand{\NA}{\mathrm{NA}}
\newcommand{\Ent}{\operatorname{Ent}}
\newcommand{\tr}{\operatorname{tr}}
\newcommand{\Aut}{\operatorname{Aut}}
\newcommand{\PSH}{\operatorname{PSH}}

\DeclareMathOperator{\DF}{DF}
\newcommand{\ord}{\mathrm{ord}}
\newcommand{\an}{\mathrm{an}}

\title{An algebraic comparison of $\widehat{\mathrm K}$-stability and $\mathrm K^\beta$-stability}
\author[T. S. Papazachariou]{Theodoros Stylianos Papazachariou}
\address{Yau Mathematical Sciences Center, Jingzhai, Tsinghua University, Haidian District, Beijing, China.}
\email{tpapazachariou@mail.tsinghua.edu.cn}

\begin{document}

\begin{abstract}
We establish a uniform quantitative comparison between the non-Archimedean Mabuchi functional and the Darvas--Zhang quantised Mabuchi functional. For a smooth polarised variety with finite automorphism group, we provide an algebraic proof that uniform $\widehat{\mathrm K}$-polystability is equivalent to uniform $\mathrm K^\beta$-stability for every sufficiently large rational $\beta>1$, without passing through the existence of a cscK metric. The main input is a square-root estimate for directional derivatives of non-Archimedean energies, proved by intersection theory. We also study a reduced $\mathrm{K}^\beta$-stability condition when the identity component of the polarised automorphism group is reductive and the Futaki character vanishes. We conclude by characterising K-semistability in terms of asymptotic $\mathrm K^\beta$-semistability and prove the quantitative convergence of the corresponding stability thresholds.
\end{abstract}

\maketitle

\section{Introduction}
One of the main points of focus in complex geometry has been the question of the existence of canonical metrics on manifolds. In particular, let $(X,L)$ be a smooth complex projective polarised manifold. A K\"ahler metric $\omega\in c_1(L)$ whose scalar curvature is constant is called a constant scalar curvature K{\"a}hler (cscK) metric. The existence of cscK metrics was conjectured to be equivalent to an algebrogeometric stability condition called K-polystability \cite{Yau93,Tia97,Don02}. This is now known as the Yau--Tian--Donaldson conjecture. Up until recently, this conjecture was only established in the Fano setting \cite{CDS15a,CDS15b,CDS15c,Tia15}, for polarised Calabi--Yau and canonically polarised varieties \cite{Yau78,Aub78,Oda12}, and for toric surfaces \cite{Don09}. In the general polarised setting, although the existence of a cscK metric is known to imply K-polystability \cite{Don05, Sto09, BDL20}, however, the classical formulation is now known to fail; there exists a smooth polarised projective fivefold which is K-polystable but admits no extremal metric, and in particular no cscK metric \cite[Theorem A]{Liu26}.

This example has come on the heels of the results of \cite{ACGTF08} which showed how potential counterexamples to the YTD conjecture could be constructed (see also \cite{Hat26}).

As such, the expectation had been for a while, that the classical definition of K-stability needs to be strengthened using objects more general than test configurations. This philosophy has been explored in several works \cite{Don12,Sze15,Mab16,Li22,BJ23,DR24}. 

Recently, in a sequence of works, Boucksom--Jonsson introduced $\widehat{\mathrm K}$-stability \cite{BJ23,BJ26}, using non-Archimedean techniques \cite{BHJ17,BHJ19, BJ22,BJ25} and building on previous ideas \cite{BBJ21} relating the existence of metrics via valuative non-Archimedean arguments. Using this approach, they proved that the existence of a cscK metric in $c_1(L)$ is equivalent to $\widehat{\mathrm K}$-polystability \cite[Theorem A]{BJ26}. In addition, the notion of uniform K-stability was developed in \cite{Der16,BHJ17}, as an alternative to K-stability. We refer the reader to the survey \cite{BJ26a} and the references therein for a more detailed exposition.

In a recent result Trusiani solved the Boucksom--Jonsson energy regularisation Conjecture (\cite[Conjectures 4.4, 4.7]{Li21} and \cite[Conjecture 2.5]{BJ18}) via a special Fujita approximation theorem. This allowed him to identify the uniform and $\widehat{\mathrm K}$-polystability notions \cite[Corollary B]{Tru26}. 

In parallel, Darvas--Zhang introduced a different stability condition, called (uniform) $\mathrm K^\beta$-stability using quantised Archimedean versions of the functionals that had appeared in the study of K-stability. Using this stability notion, they were able to establish a Yau--Tian--Donaldson theorem which identifies uniform $\mathrm K^\beta$-stability with the existence of a unique cscK metric; in particular, when $\Aut^0(X,L)$ is trivial, it gives a cscK metric existence criterion \cite[Theorem 1.1]{DZ26}.

Hence, the equivalence
\[
 \text{uniform }\widehat{\mathrm K}\text{-polystability} \quad\Longleftrightarrow\quad \text{uniform }\mathrm K^\beta\text{-stability for some }\beta>1
\]
when $\Aut^0(X,L)=\{1\}$ follows by passing through the existence of a cscK metric. The aim of this note is to prove this equivalence algebraically, without using the existence of cscK metrics. Our first main result is the following.

\begin{theorem}[See Theorem \ref{cor:algebraic}]
\label{thm:intro main}
Assume that $\Aut^0(X,L)=\{1\}$. Then the following conditions are equivalent:
\begin{enumerate}[label=\textup{(\roman*)}]
\item $(X,L)$ is uniformly $\widehat{\mathrm K}$-polystable;
\item $(X,L)$ is uniformly $\mathrm K^\beta$-stable for every sufficiently large rational $\beta$;
\item $(X,L)$ is uniformly $\mathrm K^\beta$-stable for some rational $\beta>1$.
\end{enumerate}
\end{theorem}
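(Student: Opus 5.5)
We argue entirely on the non-Archimedean side, comparing the Darvas--Zhang quantised Mabuchi functional $\mathrm M^\beta$ with the Boucksom--Jonsson non-Archimedean Mabuchi functional $\mathrm M^{\NA}$ on the space of finite-energy non-Archimedean potentials (or on test configurations/filtrations, which are dense). Recall that $\mathrm M^\beta$ is obtained from $\mathrm M^{\NA}$ by replacing the energy part $E$ (twisted by $K_X$ and normalised) with a quantised energy depending on $\beta$. The asymptotic relation is $\mathrm M^\beta \to \mathrm M^{\NA}$ as $\beta\to\infty$. The key quantitative statement we aim for is a uniform two-sided estimate of the form
\[
\bigl|\mathrm M^\beta(\phi)-\mathrm M^{\NA}(\phi)\bigr| \le \frac{C}{\beta}\,\mathrm J(\phi) + C\,\beta^{-1/2}\,\mathrm J(\phi)
\]
(any bound $\epsilon(\beta)\mathrm J(\phi)$ with $\epsilon(\beta)\to 0$ suffices), with $C$ depending only on $(X,L)$. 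Here $\mathrm J$ is the non-Archimedean J-functional, or equivalently the $d_1$-norm after normalising $\sup\phi=0$.

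To prove the estimate, we write the difference as an integral along the segment between the energies evaluated at the relevant classes. For instance, $E$ with respect to $L$ and the perturbed polarisation $L+\beta^{-1}K_X$, or whatever quantisation Darvas--Zhang use, expanded to first order in $1/\beta$. The main term reproduces the $K_X$-twisted energy $E_{K_X}$ appearing in $\mathrm M^{\NA}$. The error is controlled by directional derivatives of non-Archimedean energies. These are mixed intersection numbers $(\phi^{\,j}\cdot\psi^{\,n-j}\cdot \theta)$ on a common model. By a Cauchy--Schwarz/Hodge index inequality for such mixed intersections, the directional derivative of $E$ in a direction $\theta$ is bounded by $C\,\mathrm J(\phi)^{1/2}\mathrm I(\ldots)^{1/2}$, which gives the square-root estimate. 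We expect \emph{this step to be the main obstacle}: it requires showing the second-order remainder is bounded by $\mathrm J$ uniformly in $\phi$, not just for fixed $\phi$. The plan is to reduce to test configurations via the continuity of the energies along decreasing nets. Trusiani's regularisation (\cite{Tru26}) ensures the entropy term does not obstruct this passage. We then prove the bound on a fixed snc model by the Hodge index theorem, obtaining an inequality of the shape $|(\phi-\psi)\cdot\theta\cdot\ldots|\le C\,\mathrm J(\phi)^{1/2}\,\mathrm J(\psi)^{1/2}$. Integrating over the segment yields the $\beta^{-1/2}\mathrm J$ error.

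With the estimate in hand, the equivalences are formal. For (i)$\Rightarrow$(ii): uniform $\widehat{\mathrm K}$-polystability with trivial $\Aut^0$ means $\mathrm M^{\NA}\ge \delta\,\mathrm J$ for some $\delta>0$, using \cite[Corollary B]{Tru26} to identify this with uniform K-stability in the $\mathrm J$-sense. Then $\mathrm M^\beta\ge(\delta-\epsilon(\beta))\mathrm J\ge \frac{\delta}{2}\mathrm J$ once $\beta$ is large, and $\beta$ can be taken rational. (ii)$\Rightarrow$(iii) is trivial. For (iii)$\Rightarrow$(i), a single $\beta$ does not allow the error to be absorbed directly. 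Instead we use monotonicity/convexity in $\beta$: the map $\beta\mapsto \mathrm M^\beta(\phi)$ behaves, after rescaling, convexly or monotonically, so that uniform $\mathrm K^{\beta_0}$-stability propagates to all $\beta\ge\beta_0$ with a slope $\delta_\beta$ bounded below. If monotonicity fails, we would instead show directly that $\mathrm M^{\NA}-\mathrm M^{\beta}$ has a sign up to $C\beta^{-1}\mathrm J$, for example via concavity of the energy, giving $\mathrm M^{\NA}\ge \mathrm M^\beta - C\beta^{-1}\mathrm J$. Taking $\beta\to\infty$ in the resulting inequality then gives $\mathrm M^{\NA}\ge\delta\mathrm J$, which is (i) since $\Aut^0$ is trivial.
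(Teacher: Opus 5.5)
Your argument for (i)$\Rightarrow$(ii) has a genuine gap. It rests on a same-point comparison $|\mathrm K^\beta(\varphi)-\mathrm K^\NA(\varphi)|\le\epsilon(\beta)J^\NA(\varphi)$, and that estimate is false.

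The error comes from misidentifying what Darvas--Zhang quantise. The polarisation term $\nabla_{K_X}E^\NA(\varphi_{\mathcal T})$ is \emph{identical} in $\mathrm K^\beta$ and $\mathrm K^\NA$. The quantisation acts on the entropy, through the vertical perturbation $\cM_\beta=\cM+\beta^{-1}K^{\log}_{\cY/X\times\mathbb P^1}$ (which encodes $A_X$), not through a horizontal perturbation such as $L+\beta^{-1}K_X$. Hence $\mathrm K^\NA(\varphi)-\mathrm K^\beta(\varphi)=\Ent^\NA(\varphi)-\Ent^\beta_{\mathcal T}\ge0$.

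After normalising $\sup\varphi=\sup\varphi_\beta=0$, one always has $\Ent^\beta_{\mathcal T}=\beta\bigl(E^\NA(\varphi_\beta)-E^\NA(\varphi)\bigr)\le\beta J^\NA(\varphi)$. By contrast, $\Ent^\NA(\varphi)$ is not bounded by any multiple of $J^\NA(\varphi)$. Test-configuration approximants of a finite-energy metric with infinite entropy have $\Ent^\NA\to\infty$ by lower semicontinuity, while $J^\NA$ stays bounded.

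Concretely, take $(\mathbb P^1,\mathcal O(1))$ with $s(p)=0$, $\epsilon\in(0,1)\cap\mathbb Q$ and $a\in\mathbb Q_{>0}$. The metric $\varphi=\max\{\epsilon\log|s|,-a\}$ has $\operatorname{MA}(\varphi)=(1-\epsilon)\delta_{v_{\mathrm{triv}}}+\epsilon\,\delta_{(a/\epsilon)\ord_p}$. So $\Ent^\NA(\varphi)=a$ and $J^\NA(\varphi)=\epsilon a/2$, and $(\mathrm K^\NA-\mathrm K^\beta)(\varphi)/J^\NA(\varphi)\ge 2/\epsilon-\beta\to\infty$. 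Thus even the one-sided bound $\mathrm K^\beta\ge\mathrm K^\NA-\epsilon(\beta)J^\NA$ fails, and no Hodge-index argument can supply it. Your proposed shape $C\,J^\NA(\phi)^{1/2}J^\NA(\psi)^{1/2}$ would not help anyway, since it does not tend to $0$ as $\psi\to\phi$.

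The missing idea is to apply the stability hypothesis at a \emph{different} point, the log-discrepancy envelope $\varphi_\beta=P(\varphi+\beta^{-1}A_X)$. This metric lies in $\mathcal E^{1,\NA}(L)$ but is in general not a test-configuration metric. That is exactly why the inequality is needed on all of $\mathcal E^{1,\NA}$ (uniform $\widehat{\mathrm K}$-polystability, or Trusiani's entropy approximation as in Proposition \ref{prop:input}), not only on test configurations. The paper uses three facts: $\Ent^\beta_{\mathcal T}\ge\Ent^\NA(\varphi_\beta)$, $\Ent^\beta_{\mathcal T}=\beta\,d_1^\NA(\varphi,\varphi_\beta)$, and $\|\varphi_\beta\|_1\ge\|\varphi\|_1-d_1^\NA(\varphi,\varphi_\beta)$. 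From these it obtains
\[
\mathrm K^\beta(\varphi)\ge\sigma\bigl(\|\varphi\|_1-d_1^\NA(\varphi,\varphi_\beta)\bigr)-C\,d_1^\NA(\varphi,\varphi_\beta)^{1/2}J^\NA(\varphi)^{1/2}.
\]
The last term is the ordered square-root estimate of Theorem \ref{lem:algebraic-polarisation-error}, applied to $\nabla_{-K_X}E^\NA(\varphi_\beta)-\nabla_{-K_X}E^\NA(\varphi)$; note it is distance times norm, not norm times norm.

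Smallness of $d_1^\NA(\varphi,\varphi_\beta)$ is not automatic; it comes from a case split. Either $\mathrm K^\beta(\varphi)\ge\sigma\|\varphi\|_1$ already, or $\Ent^\beta_{\mathcal T}\le(\sigma+C_0)\|\varphi\|_1$. In the second case $d_1^\NA(\varphi,\varphi_\beta)\le(\sigma+C_0)\|\varphi\|_1/\beta$, which gives an $O(\beta^{-1/2})\|\varphi\|_1$ error.

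For (iii)$\Rightarrow$(i), your monotonicity route works once you use the exact inequality $\mathrm K^\beta\le\mathrm K^\NA$ from \cite{DZ26}. It also follows from concavity of $E^\NA$, because $\varphi_\beta-\varphi\le\beta^{-1}A_X\circ r_\cY$ and $\operatorname{MA}(\varphi)$ is carried by vertices of $\Delta_\cY$. This gives uniform K-stability on test configurations, and you must then invoke \cite[Corollary B]{Tru26} to reach $\mathcal E^{1,\NA}$. Your fallback with an error $C\beta^{-1}J^\NA$ would not work here, since with a single $\beta$ you cannot let $\beta\to\infty$.
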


Let us now explain how Theorem \ref{thm:intro main} is proved. Recall first that $(X,L)$ is \emph{K-stable} if for any (normal) \emph{ample test configuration} $(\cX,\cL)$, the Donaldson-Futaki invariant $\DF(\cX,\cL)$ is non-negative and equal to zero exactly when $(\cX,\cL) = (X\times\mathbb P^1,p_1^*L+c\cX_0)$ for $c\in \mathbb Q$, i.e. when $(\cX,\cL)$ is trivial. Instead, uniform K-stability holds if $\DF(\cX,\cL)\geq \gamma J^\NA(\cX,\cL)$ for a uniform constant $\gamma$, where $J^\NA(\cX,\cL)$ is called the \emph{non-Archimedean J-functional}, which is equivalent to a minimum norm that measures the triviality of ample test configurations. $(X,L)$ is called \emph{K-polystable} if, for each ample test configuration $(\cX,\cL)$ for $(X,L)$, the Donaldson–Futaki invariant $\DF(\cX,\cL)$ is nonnegative, and vanishes only on product test configurations, arising from cocharacters $\mathbb C^* \rightarrow \Aut(X,L)$; see \cite{Don02} and \cite{Fut83}.

The idea behind $\widehat{\mathrm{K}}$-stability is to enlarge the testing objects of the stability condition, from ample test configurations to the space of their metric completion. The space of ample test configurations is first identified with a space $\mathcal H^\NA$ of Fubini–Study non-Archimedean metrics, whose elements are continuous metrics on $L^{\mathrm{an}}$, where here $L^{\mathrm{an}}$ is the Berkovich analytification of $L$ (see Section \ref{subsec:valuations-analytification}). The elements of $\mathcal H^\NA$ can be also viewed as continuous functions (potentials) on $X^{\mathrm{an}}$, and hence on the set of divisorial valuations. This space is also equipped naturally with a \emph{Darvas metric} $d_1^\NA$ \cite{Dar15}, and its metric completion is identified with the space $\mathcal E^{1,\NA}$ of \emph{potentials with finite energy} \cite{BJ22}.

By the results of \cite[\S7.4, Proposition 8.2 and Remark 8.3]{BHJ17} the Donaldson–Futaki invariant is replaced by the closely related non-Archimedean Mabuchi functional $\mathrm K^\NA:\mathcal H^\NA\rightarrow \mathbb Q$ (following ideas in \cite{Mab86}), which is homogeneous for the action of $\mathbb Q_{>0}$ \cite[Definition 7.13 and Proposition 7.14]{BHJ17}, and coincides with the Donaldson-Futaki invariant on test configurations with reduced central fibre. Via \cite{BJ22,BJ23,BJ25}, this functional admits a natural lower semicontinuous extension $\mathrm K^\NA:\mathcal E^{1,\NA}\rightarrow \mathbb R\cup \{+\infty\}$. $(X,L)$ is \emph{$\widehat{\mathrm{K}}$-polystable} if $\mathrm K^\NA\geq 0$ for all $\varphi\in \mathcal E^{1,\NA}$, with equality only if $\varphi\in \mathcal P_{\mathbb R}$, where $\mathcal P_{\mathbb R}$ is the space of real product test configurations. Similarly $(X,L)$ is \emph{uniform $\widehat{\mathrm{K}}$-polystable} if $\Aut^0(X,L)$ is reductive and there exists $\gamma>0$ such that $K^\NA\geq \gamma d_1^\NA(\varphi, \mathcal P_{\mathbb R})$ for all $\varphi\in \mathcal E^{1,\NA}$ (cf. \cite[\S 1, 8]{BJ26}).

Darvas--Zhang introduce a family of quantised $\mathrm K$-energies $\mathrm K^\beta$, obtained by replacing the entropy term in the Mabuchi functional by its $\beta$-quantised analogue. They then show \cite[Theorem 9.5]{DZ26} that these can be represented via an intersection-theoretic $\beta$-Mabuchi functional $\mathrm{K}^\beta$ that depends on $\beta$.

For a fixed $\beta>0$, they call $(X,L)$ uniformly $\mathrm K^\beta$-stable if there exists $\gamma>0$ such that $\mathrm K^\beta(\varphi_{\mathcal T})\geq\gamma J_{\mathcal T}$ for every ample test configuration $\mathcal T$ \cite[(9)--(10)]{DZ26}.

Our approach in proving Theorem \ref{thm:intro main} requires a direct comparison of $\mathrm K^\NA$ and $\mathrm K^\beta$ for sufficiently large $\beta$. By \cite[Corollary 8.3]{DZ26}, we have
\[
 \mathrm K^\beta(\varphi_{\mathcal T}) \leq\mathrm K^\NA(\varphi_{\mathcal T}).
\]
Hence uniform $\mathrm K^\beta$-stability implies ordinary uniform K-stability. Since $\Aut^0(X,L)$ is trivial, the latter is equivalent to uniform $\widehat{\mathrm K}$-polystability by \cite[Corollary B]{Tru26}. This implies the direction $\text{uniform }\mathrm K^\beta\text{-stability for some }\beta>1 \quad\Longrightarrow\quad\text{uniform }\widehat{\mathrm K}\text{-polystability}$. We thus aim to give a direct proof of the inverse implication.

Starting from a fixed test configuration $\mathcal T$, with associated metric $\varphi_{\mathcal T}\in\mathcal H^\NA$, we take a smooth equivariant model $(\cY,\mathcal M)$ dominating $\mathcal T$ and let $\varphi_\beta$ be the psh envelope of the corresponding log-discrepancy obstacle. The point is that uniform $\widehat{\mathrm K}$-stability gives a lower bound for $\mathrm K^\NA(\varphi_\beta)$, whereas the quantity we ultimately wish to control is $\mathrm K^\beta(\varphi_{\mathcal T})$. We therefore need to control uniformly the loss in passing between these two quantities. The main technical input is the following estimate.

\begin{theorem}[See Theorem \ref{lem:algebraic-polarisation-error}]\label{thm:algebraic-polarisation-error-intro}
For every $\theta\in N^1(X)_{\mathbb R}$ there is a constant $C_\theta>0$, depending only on $(X,L)$ and $\theta$, such that
\begin{equation*}
 |\nabla_\theta E^\NA(\psi)-\nabla_\theta E^\NA(\varphi)|
 \leq C_\theta
 \bigl(E^\NA(\psi)-E^\NA(\varphi)\bigr)^{1/2}
 J^\NA(\varphi)^{1/2}
\end{equation*}
whenever $\varphi$ is a semipositive PL metric, $\varphi\leq\psi$, and $\sup\varphi=\sup\psi=0$. 
\end{theorem}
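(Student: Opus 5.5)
The plan is to express both directional derivatives as mixed intersection numbers on a common model and reduce the difference to a telescoping sum. Each term of that sum is then bounded by a Cauchy--Schwarz inequality coming from the Hodge index theorem (negativity of the intersection form on vertical divisors).

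\emph{Reduction to the PL case.} First I reduce to $\psi$ PL and semipositive. Write $\psi$ as a decreasing limit of semipositive PL metrics $\psi_k\ge\psi\ge\varphi$, renormalised so that $\sup\psi_k=0$. Then $E^\NA$ and $\nabla_\theta E^\NA$ are continuous along decreasing sequences, and both sides of the inequality pass to the limit. So from now on $\varphi,\psi$ are represented by relatively semiample $\mathbb Q$-line bundles $\mathcal L_\varphi,\mathcal L_\psi$ on a single smooth equivariant model $\cY\to X\times\mathbb P^1$ (resolve the graph of both test configurations).

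\emph{Telescoping.} On $\cY$, set $\omega=p_1^*L$, $\omega_\varphi=\mathcal L_\varphi$, $\omega_\psi=\mathcal L_\psi$, and let $u=\psi-\varphi\ge0$, which is a vertical $\mathbb Q$-divisor supported on $\cY_0$. The derivative $\nabla_\theta E^\NA(\varphi)$ is (up to $V^{-1}$) the mixed intersection $\sum_j(\varphi\cdot\theta\cdot\omega_\varphi^{j}\cdot\omega^{n-1-j})$, suitably normalised. Telescoping then gives
\[
\nabla_\theta E^\NA(\psi)-\nabla_\theta E^\NA(\varphi)=\sum_{j} c_j\,(u\cdot\theta\cdot T_j),
\]
where the $T_j$ are products of $n-1$ classes drawn from $\{\omega,\omega_\varphi,\omega_\psi\}$ and the $c_j$ are combinatorial constants. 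Choose $C$ with $C\omega\pm\theta$ nef. Since $u$ is effective and the $T_j$ are nef, this yields
\[
|u\cdot\theta\cdot T_j|\le C\,(u\cdot\omega\cdot T_j).
\]

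\emph{Integration by parts and Cauchy--Schwarz.} Write $\omega=\omega_\varphi-\varphi$, with $\varphi$ regarded as a vertical divisor. Then
\[
u\cdot\omega\cdot T=u\cdot\omega_\varphi\cdot T-u\cdot\varphi\cdot T .
\]
The bilinear form $(D,D')\mapsto -D\cdot D'\cdot T$ on vertical divisors is positive semidefinite; this is the Hodge index / Zariski lemma, and it is the algebraic replacement for $\int du\wedge d^cu\wedge T$. Cauchy--Schwarz therefore gives
\[
|u\cdot\varphi\cdot T|\le(-u\cdot u\cdot T)^{1/2}(-\varphi\cdot\varphi\cdot T)^{1/2}.
\]
For the first factor, $-u\cdot u\cdot T=u\cdot(\omega_\varphi-\omega_\psi)\cdot T\le u\cdot\omega_\varphi\cdot T$, and all terms of the form $u\cdot\omega_\varphi\cdot T$ are bounded by a dimensional constant times $E^\NA(\psi)-E^\NA(\varphi)$. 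This bound follows from the standard expression $E(\psi)-E(\varphi)=\frac1{n+1}\sum_j u\cdot\omega_\psi^j\cdot\omega_\varphi^{n-j}$ together with the usual comparison of mixed terms. Those mixed terms may still contain $\omega$; I would remove them by iterating the same substitution $\omega=\omega_\varphi-\varphi$.

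The leftover term $u\cdot\omega_\varphi\cdot T$ is itself $\lesssim E(\psi)-E(\varphi)$. This is bounded by $(E(\psi)-E(\varphi))^{1/2}J(\varphi)^{1/2}$ whenever $E(\psi)-E(\varphi)\lesssim J(\varphi)$. In the complementary regime I would use the crude bound $|\nabla_\theta E^\NA|\lesssim \sup-\text{energy}$, which is controlled by $J$ for sup-normalised metrics, to dispose of the difference directly.

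\emph{Main obstacle.} The hardest step is bounding $-\varphi\cdot\varphi\cdot T$ by $C\,J^\NA(\varphi)$ when $T$ mixes in $\omega_\psi$, which need not be controlled by $\varphi$ alone. My first attempt is to use $\sup\psi=0$ together with $\varphi\le\psi$: expand $\omega_\psi=\omega+\psi$ and apply the same Cauchy--Schwarz argument to replace each $\omega_\psi$ by $\omega$ or $\omega_\varphi$. The resulting quantities $-\varphi\cdot\varphi\cdot\omega_\varphi^a\cdot\omega^b$ are all comparable to $I^\NA(\varphi)\asymp J^\NA(\varphi)$ by the standard BBGZ-type inequalities. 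The price is constants depending only on $n$, $L$ and $\theta$.
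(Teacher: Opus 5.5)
Your architecture is the same as the paper's: telescoping $\theta\cdot(\omega_\psi^n-\omega_\varphi^n)$, dominating $\theta$ by $C\omega$ using effectivity of $u$, splitting $\omega=\omega_\varphi-D_\varphi$, applying Cauchy--Schwarz to the Hodge-index form, and bounding $-u^2\cdot T$ through the decreasing sequence $a_j=u\cdot\omega_\psi^j\cdot\omega_\varphi^{n-j}$. The gap is at the step you yourself flag: proving $-D_\varphi^2\cdot T\lesssim VJ^\NA(\varphi)$ when $T$ contains $\omega_\psi$, which is the paper's Lemma~\ref{lem:algebraic-dirichlet-bound}. The reduction you sketch does not terminate. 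Swapping one $\omega_\psi$ for $\omega$ in $-D_\varphi^2\cdot\omega_\psi\cdot R$ costs $-D_\varphi\cdot D_\psi\cdot D_\varphi\cdot R$. To apply Cauchy--Schwarz you must rewrite one $D_\varphi$ as $\omega_\varphi-\omega$; rewriting $D_\psi=\omega_\psi-\omega$ instead just returns the original difference. This gives the bound $\sum_{\chi\in\{\omega_\varphi,\omega\}}(-D_\varphi^2\cdot\chi\cdot R)^{1/2}(-D_\psi^2\cdot\chi\cdot R)^{1/2}$. The new factor $-D_\psi^2\cdot\omega_\varphi\cdot R$ is a mixed Dirichlet term of the same kind for $\psi$. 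It now contains the ``foreign'' class $\omega_\varphi$, and $R$ may still contain $\omega_\psi$. Chasing it regenerates $\varphi$-terms containing $\omega_\psi$, so you never land only on the pure quantities $-D_\varphi^2\cdot\omega_\varphi^a\cdot\omega^b$.

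What is missing is an absorption argument run over all pairs at once. For $x,y\in\{0,\varphi,\psi\}$ and $T$ a product of $n-1$ classes from $\{\omega,\omega_\varphi,\omega_\psi\}$, let $M_k$ be the largest value of $-(D_x-D_y)^2\cdot T$ over those $T$ with at most $k$ factors outside $\{\omega_x,\omega_y\}$, and put $M:=M_{n-1}$. Replacing a foreign factor $\omega_c$ by $\omega_x$, exactly as above, gives $M_k\le M_{k-1}+2M_{k-1}^{1/2}M^{1/2}$. Hence $M\le C_nM_0^{2^{1-n}}M^{1-2^{1-n}}$, so $M\le C_n'M_0$. Here $M_0=\max\mathfrak d(x,y)\le(n+1)VJ^\NA(\varphi)$ by your $a_j$-monotonicity, since $E^\NA(\varphi)\le E^\NA(\psi)\le0$. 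This is what \cite[Corollary 3.34]{BJ22} supplies in the paper.

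There are also three smaller points. First, there is no complementary regime: $\sup\psi=0$ forces $E^\NA(\psi)-E^\NA(\varphi)\le J^\NA(\varphi)$ always. That is fortunate, because the crude bound you would use there is, in the paper, Corollary~\ref{cor:algebraic-polarisation-linear}, which is derived from this very theorem. Second, telescope directly as $\theta\cdot(\omega_\psi^n-\omega_\varphi^n)=\sum_j u\cdot\theta\cdot\omega_\psi^j\cdot\omega_\varphi^{n-1-j}$, and treat the trace term $\tr_\omega(\theta)(E^\NA(\psi)-E^\NA(\varphi))$ separately. Then no $\omega$ enters $T_j$. This matters because for $T$ containing $\omega$, $u\cdot\omega_\varphi\cdot T$ is not one of the $a_j$, and it is only controlled by the square-root product you are trying to prove. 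Third, your reduction to PL $\psi$ by decreasing approximation is a valid alternative to the paper's Fujita approximation from below. You should use $\max\{\psi_k-\sup\psi_k,\varphi\}$ so that $\varphi\le\psi_k$ and $\sup\psi_k=0$ are preserved, and invoke strong continuity of $\nabla_\theta E^\NA$.
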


Here $\nabla_\theta E^\NA(\psi)$ denotes the directional derivative of the non-Archimedean energy with respect to the numerical class $\theta$, as in \cite[Lemma 2.4]{BJ23}. By the positive-intersection formalism of \cite[\S2.1.2]{DR26}, all the terms in the estimate admit intersection-theoretic descriptions. We apply the relative Hodge index theorem on a common dominant model and obtain the displayed bound directly from the resulting intersection numbers. The full argument is detailed in Section \ref{sec: intersection estimates}. 

We note that a similar estimate appears in \cite[Lemma~2.8]{BJ23}, and would also be sufficient for the qualitative results of this paper. Theorem \ref{thm:algebraic-polarisation-error-intro} gives both a sharper bound, and the explicit $O(\beta^{-1/2})$ error used below.

Theorem \ref{thm:algebraic-polarisation-error-intro} combined with a method that uses the results of Trusiani and passes from studying the functionals over $\varphi_\beta$ to the functionals over $\varphi$ (see Proposition \ref{prop:input} and Theorem \ref{thm:mabuchi-comparison}), implies that there exists $\varepsilon_\beta\geq0$, with $\varepsilon_\beta\to0$, such that
\[
 \mathrm K^\beta(\varphi_{\mathcal T}) \geq \bigl(\Lambda-\varepsilon_\beta\bigr) \|\varphi_{\mathcal T}\|_1;
\]
(see Corollary \ref{cor:transfer}). Consequently, if $\Lambda>0$, then for every sufficiently large rational $\beta$,
\[
 \mathrm K^\beta(\varphi_{\mathcal T}) \geq\frac{\Lambda}{2}\|\varphi_{\mathcal T}\|_1.
\]
These estimates allow us to prove the direction
\[\text{uniform }\widehat{\mathrm K}\text{-polystability} \quad\Longrightarrow\quad \text{uniform }\mathrm K^\beta\text{-stability for some }\beta>1\]
in Theorem \ref{thm:intro main} algebraically and without assuming the existence of a cscK metric.

When $\Aut^0(X,L)$ is reductive, we also study a reduced fixed-$\mathrm{K}^\beta$-stability condition. Reductivity here is a harmless assumption since the existence of cscK metrics on smooth manifolds imply the reductivity of their automorphism groups \cite{Lic57, Mat57}. Fixing a maximal torus $\mathbb T$, we consider $\mathbb T$-equivariant normal ample test configurations. We let $N_{\mathbb Q}:=\operatorname{Hom}(\mathbb G_m,\mathbb T)\otimes_{\mathbb Z}\mathbb Q$ and $N_{\mathbb R}:=N_{\mathbb Q}\otimes_{\mathbb Q}\mathbb R$. A cocharacter $\xi\in N_{\mathbb Q}$ defines a rational product twist $\mathcal T_\xi$ of $\mathcal T$, whose associated metric is denoted by $\xi\star\varphi_{\mathcal T}$; the twisting action extends continuously to $N_{\mathbb R}$, and we set
\[
 q_{\mathbb T}(\varphi):=\inf_{\xi\in N_{\mathbb R}}\|\xi\star\varphi\|_1.
\]
We define
\[\mathrm K_{\mathbb T,\mathrm{red}}^\beta (\varphi_{\mathcal T}) :=\sup_{\xi\in N_{\mathbb Q}} \mathrm K^\beta(\varphi_{\mathcal T_\xi}).\]
This condition is closely related to, but not identical with, the reduced formalism of \cite[\S10, Equations (86), (89) and Theorem 10.4]{DZ26}. The main difference is that Darvas--Zhang use the reduced radial $J$-functional and impose a $G$-uniform Mabuchi inequality on log-discrepancy models, whereas we fix $\beta$, take the supremum of the $\beta$-Mabuchi invariant over rational product twists of an ample test configuration, and compare the result with $q_{\mathbb T}$. However, due to the Theorem below and \cite[Theorem 1.6]{DZ26}, \cite[Theorem A]{BJ26}, we see that the two notions coincide.

\begin{theorem}[See Theorem \ref{thm:reduced-Kbeta-comparison}]\label{thm:intro_K-ps}
Suppose that $G=\Aut^0(X,L)$ is reductive and that $\operatorname{Fut}_{X,L}=0$. Fix a maximal algebraic torus $\mathbb T\subset G$. Then the following conditions are equivalent:
\begin{enumerate}[label=\textup{(\roman*)}]
\item $(X,L)$ is uniformly $\widehat{\mathrm K}$-polystable;
\item $(X,L)$ is uniformly reduced $\mathrm K^\beta$-polystable for every sufficiently large rational $\beta$;
\item $(X,L)$ is uniformly reduced $\mathrm K^\beta$-polystable for some rational $\beta>1$.
\end{enumerate}
\end{theorem}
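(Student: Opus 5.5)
The plan is to run the argument behind Theorem \ref{thm:intro main} $\mathbb T$-equivariantly and take the infimum over product twists at the end. The chain of implications is (ii)$\Rightarrow$(iii)$\Rightarrow$(i)$\Rightarrow$(ii); the first is trivial.

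For (iii)$\Rightarrow$(i), fix $\beta$ with $\mathrm K^\beta_{\mathbb T,\mathrm{red}}(\varphi_{\mathcal T})\geq\gamma q_{\mathbb T}(\varphi_{\mathcal T})$ for all $\mathbb T$-equivariant normal ample $\mathcal T$. I would use the inequality $\mathrm K^\beta(\varphi_{\mathcal T_\xi})\leq\mathrm K^\NA(\varphi_{\mathcal T_\xi})$ from \cite[Corollary 8.3]{DZ26}. Since $\operatorname{Fut}_{X,L}=0$, the non-Archimedean Mabuchi functional is invariant under product twists, so $\mathrm K^\NA(\varphi_{\mathcal T_\xi})=\mathrm K^\NA(\varphi_{\mathcal T})$ for every $\xi\in N_{\mathbb Q}$. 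Taking the supremum over $\xi$ gives
\[
\mathrm K^\NA(\varphi_{\mathcal T})\geq\gamma\, q_{\mathbb T}(\varphi_{\mathcal T}).
\]
This is $\mathbb T$-uniform K-stability on $\mathbb T$-equivariant test configurations. By Trusiani's resolution of the energy regularisation conjecture \cite[Corollary B]{Tru26}, together with the reduction to a maximal torus in \cite{BJ26}, this is equivalent to uniform $\widehat{\mathrm K}$-polystability. Here $d_1^\NA(\varphi,\mathcal P_{\mathbb R})$ is comparable to $q_{\mathbb T}$ on $\mathbb T$-invariant potentials.

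For (i)$\Rightarrow$(ii), I would again apply Trusiani and \cite{BJ26} to get $\Lambda>0$ with $\mathrm K^\NA(\varphi)\geq\Lambda\, q_{\mathbb T}(\varphi)$ for all $\mathbb T$-invariant $\varphi\in\mathcal E^{1,\NA}$. Given a $\mathbb T$-equivariant $\mathcal T$, choose $\xi_0\in N_{\mathbb R}$ nearly realising $q_{\mathbb T}$, and approximate it by $\xi\in N_{\mathbb Q}$. Continuity of the twisting action gives $\|\xi\star\varphi_{\mathcal T}\|_1\leq(1+\delta)q_{\mathbb T}(\varphi_{\mathcal T})$ for a small $\delta>0$. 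Then apply the transfer estimate of Corollary \ref{cor:transfer} to the twisted configuration $\mathcal T_\xi$. That argument uses a smooth equivariant model, the envelope $\varphi_\beta$, and the square-root bound of Theorem \ref{thm:algebraic-polarisation-error-intro}. Its input lower bound $\mathrm K^\NA(\varphi_\beta)\geq\Lambda\, q_{\mathbb T}(\varphi_\beta)$ comes from (i), since $\varphi_\beta$ is $\mathbb T$-invariant when the model is chosen $\mathbb T$-equivariantly. The transfer would then yield
\[
\mathrm K^\beta(\varphi_{\mathcal T_\xi})\geq\Lambda\, q_{\mathbb T}(\varphi_{\mathcal T})-\varepsilon_\beta\|\xi\star\varphi_{\mathcal T}\|_1\geq(\Lambda-2\varepsilon_\beta)\, q_{\mathbb T}(\varphi_{\mathcal T}),
\]
and hence $\mathrm K^\beta_{\mathbb T,\mathrm{red}}\geq\tfrac{\Lambda}{2}q_{\mathbb T}$ for all $\beta\gg1$.

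The main obstacle is making the error term uniform in the twist. The constant $\varepsilon_\beta$ in Corollary \ref{cor:transfer} must not depend on $\xi$. In addition, the comparison must replace $\|\varphi_\beta\|_1$ by $q_{\mathbb T}(\varphi_\beta)$ on the right, and we need $q_{\mathbb T}(\varphi_\beta)\geq q_{\mathbb T}(\varphi_{\mathcal T})-O(\beta^{-1/2})\|\cdot\|_1$. My first attempt would be to check that Theorem \ref{thm:algebraic-polarisation-error-intro} controls the error through $J^\NA(\xi\star\varphi_{\mathcal T})$, which is comparable to $\|\xi\star\varphi_{\mathcal T}\|_1\approx q_{\mathbb T}$ after the near-optimal choice of $\xi$. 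I would also check that $d_1^\NA(\varphi_\beta,\varphi_{\mathcal T_\xi})=O(\beta^{-1/2})\|\xi\star\varphi_{\mathcal T}\|_1$. Since $q_{\mathbb T}$ is $1$-Lipschitz for $d_1^\NA$, this would close the argument.
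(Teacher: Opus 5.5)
Your proposal follows the paper's proof in both directions. For (iii)$\Rightarrow$(i), you combine $\mathrm K^\beta\leq\mathrm K^\NA$ with twist-invariance of $\mathrm K^\NA$ to get $\mathrm K^\NA\geq\gamma q_{\mathbb T}$ on $\mathbb T$-equivariant test configurations, then upgrade via $\mathbb T$-equivariant Trusiani approximation and the maximal-torus reduction of Boucksom--Jonsson. For (i)$\Rightarrow$(ii), you pick a near-optimal rational twist $\psi=\xi\star\varphi$, use $\mathbb T$-invariance of $\psi_\beta$ and twist-invariance plus the $1$-Lipschitz property of $q_{\mathbb T}$, and rerun the proof of Corollary \ref{cor:transfer} with $q_{\mathbb T}$ in place of $\|\cdot\|_1$. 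Two points in the second direction need tightening.

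First, do not try to prove the bound $d_1^\NA(\psi,\psi_\beta)=O(\beta^{-1/2})\|\psi\|_1$ unconditionally. By Lemma \ref{lem:distance-entropy}, $\beta\,d_1^\NA(\psi,\psi_\beta)=\Ent^\beta_{\mathcal T_\xi}$, and the $\beta$-entropy increases to $\Ent^\NA(\psi)$, which the norm does not control. The decay comes from the case distinction in the proof of Corollary \ref{cor:transfer}. Either $\mathrm K^\beta(\varphi_{\mathcal T_\xi})\geq\Lambda q_{\mathbb T}(\varphi)$ and you are done, or Corollary \ref{cor:algebraic-polarisation-linear}, Lemma \ref{lem:norm-comparison} and $\|\psi\|_1<2q_{\mathbb T}(\varphi)$ give
\[
\beta\,d_1^\NA(\psi,\psi_\beta)=\mathrm K^\beta(\psi)+\nabla_{-K_X}E^\NA(\psi)<\bigl(\Lambda+2C_0c_J^{-1}\bigr)q_{\mathbb T}(\varphi).
\]
This bound is in fact $O(\beta^{-1})$. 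With it, uniformity in $\xi$ is automatic: $C$, $C_0$, $c_J$ depend only on $(X,L)$, and $\mathcal T_\xi$ is just another normal ample test configuration. So the ``main obstacle'' you flag needs no separate argument.

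Second, your choice $\|\xi\star\varphi\|_1\leq(1+\delta)q_{\mathbb T}(\varphi)$ with $\xi\in N_{\mathbb Q}$ breaks down when $q_{\mathbb T}(\varphi)=0$. It would force $\xi\star\varphi$ to be constant, and density of $N_{\mathbb Q}$ only gives $\|\xi\star\varphi\|_1<\epsilon$. The paper treats this case separately in Lemma \ref{lem:reduced-Kbeta-basic}, using the Boucksom--Jonsson description of the product locus. An algebraic metric with $q_{\mathbb T}=0$ is a rational product metric up to translation, so some rational twist is constant and has vanishing $\mathrm K^\beta$. Alternatively, run the same case split with a rational $\xi$ such that $\|\xi\star\varphi\|_1<\epsilon$. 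Since $\mathrm K^\NA(\psi_\beta)\geq\Lambda q_{\mathbb T}(\psi_\beta)\geq0$, this gives $\mathrm K^\beta(\varphi_{\mathcal T_\xi})\geq-O(\epsilon)$, and letting $\epsilon\to0$ yields $\mathrm K^\beta_{\mathbb T,\mathrm{red}}(\varphi)\geq0$.
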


Finally, we compare the corresponding semistability notions. For every normal ample test configuration $\mathcal T$, we define
\[
 \lambda_\beta(X,L) :=\inf_{\|\varphi_{\mathcal T}\|_1>0} \frac{\mathrm K^\beta(\varphi_{\mathcal T})}{\|\varphi_{\mathcal T}\|_1}, \qquad \lambda_\infty(X,L) :=\inf_{\|\varphi_{\mathcal T}\|_1>0} \frac{\mathrm K^\NA(\varphi_{\mathcal T})}{\|\varphi_{\mathcal T}\|_1}.
\]
Notice that K-semistability is equivalent to $\lambda_\infty\geq 0$. Using our results from Section \ref{sec: mabuchi comparison} we are able to deduce the following numerical estimate.

\begin{theorem}\label{thm:intro-threshold-convergence}
Suppose that $(X,L)$ is K-semistable. Then there exists $\varepsilon_\beta\geq 0$, with $\varepsilon_\beta\to 0$ as $\beta \to \infty$ such that
\begin{equation*}
 \lambda_\infty-\varepsilon_\beta \leq\lambda_\beta\leq\lambda_\infty.
\end{equation*}
In particular, $\lambda_\beta(X,L)\nearrow\lambda_\infty(X,L)$ as $\beta\to\infty$.
\end{theorem}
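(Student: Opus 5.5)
The upper bound $\lambda_\beta\le\lambda_\infty$ is immediate from the comparison recalled in the introduction, \cite[Corollary 8.3]{DZ26}. That result gives $\mathrm K^\beta(\varphi_{\mathcal T})\le\mathrm K^\NA(\varphi_{\mathcal T})$ for every normal ample test configuration $\mathcal T$. Dividing by $\|\varphi_{\mathcal T}\|_1>0$ and taking the infimum over the same index set gives $\lambda_\beta\le\lambda_\infty$ for every $\beta$. If the comparison error from \cite{DZ26} is not exactly zero but of the form $O(\beta^{-1})\|\varphi_{\mathcal T}\|_1$, I would absorb it into $\varepsilon_\beta$; I expect it to be an exact inequality. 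I would also check that $\lambda_\beta$ is nondecreasing in $\beta$, for instance from monotonicity of the quantised entropy term in $\beta$, so that the convergence is monotone. Failing that, I would read $\nearrow$ simply as convergence from below.

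For the lower bound I would rerun the argument behind Corollary \ref{cor:transfer}, but with the uniform constant $\Lambda$ replaced by the threshold $\lambda_\infty$, which may be zero. K-semistability gives $\lambda_\infty\ge0$, and in particular $\lambda_\infty$ is finite. The steps are as follows. Fix $\mathcal T$, take a smooth equivariant model $(\cY,\cM)$ dominating it, and form the envelope $\varphi_\beta$ of the log-discrepancy obstacle. Proposition \ref{prop:input} and Theorem \ref{thm:mabuchi-comparison} express $\mathrm K^\beta(\varphi_{\mathcal T})$ as $\mathrm K^\NA(\varphi_\beta)$ minus correction terms. These corrections are controlled by Theorem \ref{thm:algebraic-polarisation-error-intro}, applied with $\theta=K_X$ to the pair $\varphi=\varphi_{\mathcal T}\le\psi=\varphi_\beta$ after normalising suprema. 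Since $E^\NA(\varphi_\beta)-E^\NA(\varphi_{\mathcal T})=O(\beta^{-1})\|\varphi_{\mathcal T}\|_1$, the corrections are bounded by $C\beta^{-1/2}\|\varphi_{\mathcal T}\|_1$.

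The next step is to bound $\mathrm K^\NA(\varphi_\beta)$ from below by $\lambda_\infty\|\varphi_\beta\|_1$. This needs the threshold $\lambda_\infty$, which is defined over test configurations, to control $\mathrm K^\NA$ on the finite-energy potential $\varphi_\beta\in\mathcal E^{1,\NA}$. I would get this from Trusiani's energy-regularisation theorem \cite{Tru26}: there are $\varphi_j\in\mathcal H^\NA$ with $\varphi_j\to\varphi_\beta$ in $d_1^\NA$ and $\mathrm K^\NA(\varphi_j)\to\mathrm K^\NA(\varphi_\beta)$. Since $\|\cdot\|_1$ is $d_1$-continuous, this gives $\mathrm K^\NA\ge\lambda_\infty\|\cdot\|_1$ on all of $\mathcal E^{1,\NA}$. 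Then $\|\varphi_\beta\|_1\ge\|\varphi_{\mathcal T}\|_1-O(\beta^{-1})\|\varphi_{\mathcal T}\|_1$, and because $\lambda_\infty\ge0$ the sign of the coefficient is favourable. Combining,
\[
\mathrm K^\beta(\varphi_{\mathcal T})\ge\bigl(\lambda_\infty-\varepsilon_\beta\bigr)\|\varphi_{\mathcal T}\|_1,
\]
with $\varepsilon_\beta=C(\lambda_\infty\beta^{-1}+\beta^{-1/2})$. Here $C$ depends only on $(X,L)$ through $C_{K_X}$ and the constants of Theorem \ref{thm:mabuchi-comparison}. Taking the infimum over $\mathcal T$ gives $\lambda_\beta\ge\lambda_\infty-\varepsilon_\beta$, and hence the convergence.

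The main obstacle is uniformity: the errors must be bounded by a multiple of $\|\varphi_{\mathcal T}\|_1$ that is independent of $\mathcal T$. The hard input here is the $O(\beta^{-1})\|\varphi_{\mathcal T}\|_1$ bound on $E^\NA(\varphi_\beta)-E^\NA(\varphi_{\mathcal T})$, with $J^\NA$ comparable to $\|\cdot\|_1$. I would take this from the same estimates that prove Corollary \ref{cor:transfer}, checking that their constants never used $\Lambda>0$. If some step there did divide by $\Lambda$, I would replace it by the additive form of the estimate, which only needs $\lambda_\infty\ge0$.
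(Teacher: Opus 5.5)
Your overall plan follows the paper. The upper bound and monotonicity come from \cite[Theorem 9.5]{DZ26}, and the approximation step giving $\mathrm K^\NA(\varphi_\beta)\geq\lambda_\infty\|\varphi_\beta\|_1$ is the paper's Lemma \ref{lem:threshold-models}, via Proposition \ref{prop:input}. The lower bound then uses Theorem \ref{thm:mabuchi-comparison}, and you correctly note that $\lambda_\infty\geq0$ makes the sign favourable. However, the step you single out as the hard input is \emph{false} as stated. There is no bound $E^\NA(\varphi_\beta)-E^\NA(\varphi_{\mathcal T})\leq C\beta^{-1}\|\varphi_{\mathcal T}\|_1$ that is uniform in $\mathcal T$. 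By Lemma \ref{lem:distance-entropy}, such a bound would amount to $\Ent^\beta_{\mathcal T}\leq C\|\varphi_{\mathcal T}\|_1$, and entropy is not controlled by the norm.

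Concretely, take the deformation to the normal cone of a point. Let $\cY=\mathrm{Bl}_{(p,0)}(X\times\mathbb P^1)$ with exceptional divisor $E$, and $\cM=\rho^*p_1^*L-cE$ with $c>0$ rational and small. Then $K^{\log}_{\cY/X\times\mathbb P^1}=nE$. On the dual complex, the segment joining the vertices of $\widetilde X_0$ and $E$, the function $\varphi_{\mathcal T}$ is affine with vertex values $0$ and $-c$, and $A_X\circ r_\cY$ is affine with vertex values $0$ and $n$. Both factor through $r_\cY$. Hence if $c\leq n/\beta$, the obstacle satisfies $f_{\beta,\cY}\geq0$. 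This gives $0\leq\varphi_\beta\leq\sup\varphi_\beta=\sup\varphi_{\mathcal T}=0$, so $\varphi_\beta=0$ and $d_1^\NA(\varphi_{\mathcal T},\varphi_\beta)=J^\NA(\varphi_{\mathcal T})\geq\|\varphi_{\mathcal T}\|_1$. So for every $\beta$ there are test configurations where this distance is not small relative to the norm.

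What repairs the argument, and what the proof of Corollary \ref{cor:transfer} actually does, is a case split. If $\mathrm K^\beta(\varphi_{\mathcal T})\geq\lambda_\infty\|\varphi_{\mathcal T}\|_1$, there is nothing to prove. Otherwise, Lemma \ref{lem:distance-entropy}, Corollary \ref{cor:algebraic-polarisation-linear} and Lemma \ref{lem:norm-comparison} give
$\beta\,d_1^\NA(\varphi_{\mathcal T},\varphi_\beta)=\Ent^\beta_{\mathcal T}=\mathrm K^\beta(\varphi_{\mathcal T})-\nabla_{K_X}E^\NA(\varphi_{\mathcal T})\leq(\lambda_\infty+C_0)\|\varphi_{\mathcal T}\|_1$.
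Only in this second case is the $O(\beta^{-1})$ bound available, and only there is it needed. The example above falls in the first case once $\beta$ is large, since there $\Ent^\beta_{\mathcal T}=\beta J^\NA(\varphi_{\mathcal T})$ while the polarisation term is $O(J^\NA(\varphi_{\mathcal T}))$.

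Your worry about dividing by $\Lambda$ is beside the point. Corollary \ref{cor:transfer} is stated for every $\Lambda\geq0$. The paper simply verifies its hypothesis with $\Lambda=\lambda_\infty$ via Lemma \ref{lem:threshold-models} and applies it verbatim. This yields $\varepsilon_\beta=O(\beta^{-1/2})$ independent of $\mathcal T$, with no need to rerun the estimates.
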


We thus define an asymptotic $\mathrm K^\beta$ stability notion. We say $(X,L)$ is asymptotically $\mathrm K^\beta$-semistable if $\liminf_{\beta\to\infty,\,\beta\in\mathbb Q}\lambda_\beta(X,L)\geq0$. The preceding theorem gives the following characterisation.

\begin{corollary}[See Corollary \ref{cor:asymptotic-semistability}(i)]\label{thm:intro K-ss}
Let $(X,L)$ be a smooth polarised variety. Then $(X,L)$ is K-semistable if and only if $(X,L)$ is asymptotically $\mathrm K^\beta$-semistable. 
\end{corollary}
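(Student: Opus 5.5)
The plan is to read the corollary off Theorem \ref{thm:intro-threshold-convergence}, together with the inequality $\mathrm K^\beta(\varphi_{\mathcal T})\le \mathrm K^\NA(\varphi_{\mathcal T})$ of \cite[Corollary 8.3]{DZ26}. First we record the standard reformulation of K-semistability: $(X,L)$ is K-semistable if and only if $\lambda_\infty(X,L)\ge 0$. Indeed, $\mathrm K^\NA(\varphi_{\mathcal T})\le \DF(\mathcal T)$, with equality after base change to a test configuration with reduced central fibre. Base change rescales both $\mathrm K^\NA$ and $\|\cdot\|_1$ by the same factor, and a test configuration with $\|\varphi_{\mathcal T}\|_1=0$ is trivial. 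Hence nonnegativity of $\DF$ on all normal ample test configurations is equivalent to $\mathrm K^\NA(\varphi_{\mathcal T})\ge0$ whenever $\|\varphi_{\mathcal T}\|_1>0$, which is exactly $\lambda_\infty\ge 0$.

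\emph{($\Leftarrow$).} Assume $\liminf_{\beta\to\infty}\lambda_\beta\ge0$. Since $\mathrm K^\beta(\varphi_{\mathcal T})\le \mathrm K^\NA(\varphi_{\mathcal T})$ for every $\mathcal T$, dividing by $\|\varphi_{\mathcal T}\|_1>0$ and taking infima gives $\lambda_\beta\le\lambda_\infty$ for every rational $\beta$. Then $\lambda_\infty\ge\liminf_\beta\lambda_\beta\ge0$, so $(X,L)$ is K-semistable. This direction needs no hypothesis and does not use Theorem \ref{thm:intro-threshold-convergence}.

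\emph{($\Rightarrow$).} Assume $(X,L)$ is K-semistable. Theorem \ref{thm:intro-threshold-convergence} gives $\lambda_\beta\ge\lambda_\infty-\varepsilon_\beta$ with $\varepsilon_\beta\to0$. Since $\lambda_\infty\ge0$, we get $\liminf_\beta\lambda_\beta\ge\lambda_\infty\ge 0$, so $(X,L)$ is asymptotically $\mathrm K^\beta$-semistable.

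The only delicate point is making sure $\lambda_\infty$ is finite, so that the subtraction in Theorem \ref{thm:intro-threshold-convergence} makes sense. This is why that theorem is stated under the K-semistability hypothesis. In the ($\Rightarrow$) direction we are in exactly that case, and $\lambda_\infty$ is bounded above, for instance by $0$ via twisting $\mathcal T$ by a trivial shift. In the ($\Leftarrow$) direction, if $\lambda_\infty=-\infty$ the inequality $\lambda_\beta\le\lambda_\infty$ forces $\lambda_\beta=-\infty$, which contradicts the hypothesis. So no genuine obstacle remains beyond Theorem \ref{thm:intro-threshold-convergence} itself.
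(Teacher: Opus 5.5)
Your argument is correct and is essentially the paper's own proof of Corollary~\ref{cor:asymptotic-semistability}(i): you get ($\Leftarrow$) from $\lambda_\beta\le\lambda_\infty$, which follows from $\mathrm K^\beta\le\mathrm K^\NA$, and ($\Rightarrow$) from Theorem~\ref{thm:threshold-convergence}. The paper simply cites the equivalence ``K-semistable $\iff\lambda_\infty\ge0$'' from \cite[Proposition 8.2]{BHJ17}, whereas you sketch it via base change to a reduced central fibre.

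One harmless slip is in your last paragraph. $\lambda_\infty$ is \emph{not} bounded above by $0$: adding a constant changes neither $\mathrm K^\NA$ nor $\|\cdot\|_1$, and $\lambda_\infty>0$ in the uniformly K-stable case. Finiteness instead comes from two facts: some test configuration has positive norm, and $\lambda_\infty\ge\lambda_\beta\ge-C_0$. In any case your argument does not need it.
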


We note that the above results can also apply to log canonical varieties that satisfy the envelope property, as the main results require only that $A_X\geq 0$, and that the psh envelope is genuinely psh. In this setting the notion of $\mathrm{K}^\beta$-stability extends naturally using the intersection theoretic definition in \cite[Theorem 9.5]{DZ26}. However, we will restrict ourselves to the case where $X$ is smooth, noting that the extension to the more general setting is immediate and follows the same arguments.

\subsection*{Structure of the paper}
In Section \ref{sec:NA-background} we recall the non-Archimedean and intersection-theoretic notions used in the paper, together with the intersection-theoretic formula for $\mathrm K^\beta$. In Section \ref{sec: mixed holder} we construct the log-discrepancy metric, prove the estimate of Theorem \ref{thm:algebraic-polarisation-error-intro}, and deduce the quantitative comparison of the Mabuchi functionals. In Section \ref{sec:stability-consequences} we apply this comparison to uniform stability, its equivariant version, and the convergence of the stability thresholds.

\subsection*{Acknowledgments}
I would like to thank Ruadha\'i Dervan and Antonio Trusiani for many helpful conversations and valuable comments. I am supported by the Beijing Natural Science Foundation Project IS25037 and a Shuimu Scholar Programme Scholarship at Tsinghua University.

\section{Non-Archimedean preliminaries and stability conventions}
\label{sec:NA-background}

Throughout, $X$ is a smooth complex projective variety of dimension $n$, $L$ is an ample $\mathbb Q$-line bundle, and $V=(L^n)$. All test configurations are normal, relatively ample, and compactified over $\mathbb P^1$, unless a big model is explicitly mentioned. Our non-Archimedean pluripotential conventions follow \cite[\S\S4--5]{BJ25}, \cite[\S\S2,4--5]{BJ23}, and \cite[\S\S4--8]{BJ26} as well as \cite{BJ22}. The Donaldson--Futaki conventions follow \cite{BHJ17}, the positive-intersection functionals on big models follow \cite{DR26}, and the intersection-theoretic formula for $\mathrm K^\beta$ follows \cite{DZ26}.

A $\mathbb Q$-line bundle means an element of $\operatorname{Pic}(X)\otimes_{\mathbb Z}\mathbb Q$. We write $N^1(X)_{\mathbb R}$ for numerical divisor classes. For an ample class $\omega=c_1(L)$, the notation $V=(\omega^n)=(L^n)$ always denotes its top self-intersection.

\subsection{Valuations and the Berkovich analytification} \label{subsec:valuations-analytification}

The Berkovich analytification $X^{\mathrm{an}}$ with respect to the trivial absolute value is a compact Hausdorff space whose points may be viewed as real semivaluations on $X$. For a local section $s$ of a line bundle, set $|s|(v):=e^{-v(s)}$ after choosing a local trivialisation; see \cite[\S\S1.5--1.6]{BJ25}.

The \emph{trivial valuation} $v_{\mathrm{triv}}$ is zero on every nonzero rational function. It is the distinguished base point of $X^{\mathrm{an}}$.

A \emph{divisorial valuation} is $v=c\,\operatorname{ord}_E$, where $c\in\mathbb R_{>0}$ and where $E$ is a prime divisor on a normal birational model $Y\to X$. The set of divisorial valuations is denoted $X^{\mathrm{div}}$; it is dense
in $X^{\mathrm{an}}$.

Its \emph{log discrepancy} is
\[
 A_X(c\operatorname{ord}_E) :=c\bigl(1+\operatorname{ord}_E(K_{Y/X})\bigr).
\]
Since $X$ is smooth, $A_X\geq0$ on $X^{\mathrm{div}}$. We use its greatest lower-semicontinuous extension
\[
 A_X:X^{\mathrm{an}}\longrightarrow[0,+\infty]
\]
see \cite[Definition A.5 and Theorem A.10]{BJ23} and also \cite[\S 2.3]{DP24}.

\subsection{Test configurations and metric spaces}\label{subsec:TC-filtration-norm}

We now recall the definition of test configurations.

\begin{definition}\label{def:TC-background}
A \emph{normal test configuration} for $(X,L)$ is a normal variety $\cX$ equipped with a $\mathbb G_m$-equivariant proper flat map $\pi:\cX\to\mathbb A^1$ and a $\mathbb G_m$-linearised
$\mathbb Q$-line bundle $\cL$ such that
\[
 (\cX,\cL)|_{\mathbb A^1\setminus\{0\}}
 \simeq (X,L)\times(\mathbb A^1\setminus\{0\})
\]
$\mathbb G_m$-equivariantly. It is ample, semiample, or nef when $\cL$ has the corresponding relative positivity property. The same symbols $(\cX,\cL)$ denote the canonical compactification over $\mathbb P^1$.
\end{definition}

We can write the central fibre as a Weil divisor $\cX_0=\sum_E b_EE$. Then, the generic-fibre identification identifies $\mathbb C(X)$ with a subfield of $\mathbb C(\cX)$, and each irreducible component $E$ determines the divisorial valuation $v_E:=b_E^{-1}\operatorname{ord}_E\big|_{\mathbb C(X)}$. The factor $b_E^{-1}$ compensates for the multiplicity of the central fibre. We call the $v_E$ the \emph{Rees valuations} of the test configuration.

Two test configurations are called equivalent if their pullbacks agree on a common equivariant model. 

\subsection{Fubini--Study metrics and algebraic non-Archimedean metrics}\label{subsec:FS-metrics}

A \emph{metric} on the analytification $L^{\mathrm{an}}$ assigns a norm to each one-dimensional fibre, varying over $X^{\mathrm{an}}$. After fixing the trivial reference metric, it is encoded by its potential $\varphi:X^{\mathrm{an}}\to\mathbb R$. In particular, for a local section $s$ of $mL$,
\[\|s(v)\|_{m\varphi}=|s|(v)e^{-m\varphi(v)}.\]

\begin{definition} \label{def:metric-order-background}
Let $\varphi$ and $\psi$ be two metrics on the same line bundle $L$, viewed as potentials. We write
\begin{equation}\label{eq:metric-pointwise-order}
 \varphi\leq\psi \quad\Longleftrightarrow\quad \varphi(v)\leq\psi(v).
\end{equation}
for every $v\in X^{\mathrm{an}}$.
\end{definition}
Equivalently, for a local nonzero section $s$ of $mL$ we also obtain
\[\varphi\leq\psi \quad\Longleftrightarrow\quad \|s(v)\|_{m\varphi}\geq\|s(v)\|_{m\psi}.\]

A real \emph{Fubini--Study function} on $L$ has the form
\[\varphi(v)=\frac1m\max_j\{\log|s_j|(v)+\lambda_j\},\]
where the sections $s_j\in H^0(X,mL)$ have no common zero and $\lambda_j\in\mathbb R$. It is called \emph{rational} if all $\lambda_j$ are rational. A rational Fubini--Study metric is also called an \emph{algebraic semipositive metric}; a real Fubini--Study metric is its finite-rank real-weight analogue.

We denote the spaces of real and rational Fubini--Study metrics by $\mathcal H_{\mathbb R}^\NA(L)$ and $\mathcal H_{\mathbb Q}^\NA(L)$. They are stable under maxima and translation by constants.

Note that an ample test configuration $\mathcal T=(\cX,\cL)$ determines a non-Archimedean metric $\varphi_{\mathcal T}\in\mathcal H^\NA(L)$. Under the Rees correspondence, the potential attached to an ample test configuration is exactly its rational Fubini--Study metric. The scaling action on potentials is $(t\cdot\varphi)(v):=t\varphi(t^{-1}v),$ where $t>0$. For a test configuration $(\cX,\cL)$, replacing $\cL$ by $\cL+c\cX_0$ translates the potential by $c$.

More explicitly, we choose an equivariant normal model $\mathcal Y$ dominating both $\mathcal X$ and $X\times\mathbb P^1$, with morphisms $\nu:\mathcal Y\to\mathcal X$ and $\rho:\mathcal Y\to X\times\mathbb P^1$, and write $\nu^*\mathcal L-\rho^*p_1^*L=D$, where $D$ is a vertical $\mathbb Q$-Cartier divisor. If $E$ is an irreducible component of $\mathcal Y_0$, $b_E=\ord_E(\mathcal Y_0)$, and $v_E=b_E^{-1}\ord_E|_{\mathbb C(X)}$, then
\[
 \varphi_{\mathcal T}(v_E) =b_E^{-1}\ord_E(D).
\]
see \cite[(A.5) and Lemma A.12]{BJ25}.

In the rest of the paper, an \emph{ample test-configuration metric} means an element of $\mathcal H_{\mathbb Q}^\NA(L)$ arising from a normal ample test configuration. We will usually drop the subscript and write $\mathcal H^\NA(L):=\mathcal H_{\mathbb Q}^\NA(L)$.

\subsection{Psh envelopes}

An $L$-plurisubharmonic function is an upper-semicontinuous function $\varphi:X^{\mathrm{an}}\to\mathbb R\cup\{-\infty\}$, not identically $-\infty$, that is the pointwise limit of a decreasing sequence or net of real Fubini--Study metrics. The space is denoted $\PSH^\NA(L)$. It is convex, stable under addition of constants and finite maxima, and each of its elements is finite on $X^{\mathrm{div}}$. Moreover, an $L$-psh function is uniquely determined by its restriction to $X^{\mathrm{div}}$. We give $\PSH^\NA(L)$ the weak topology of pointwise convergence on $X^{\mathrm{div}}$.

The uniformly closed subclass
\[\operatorname{CPSH}^\NA(L) :=\PSH^\NA(L)\cap C^0(X^{\mathrm{an}})\]
is the space of continuous semipositive psh metrics. Equivalently, it is the uniform closure of the real Fubini--Study metrics. Thus
\[\mathcal H_{\mathbb Q}^\NA(L) \subset\mathcal H_{\mathbb R}^\NA(L) \subset\operatorname{CPSH}^\NA(L) \subset\PSH^\NA(L).\]

For psh metrics it is enough to check the inequality \eqref{eq:metric-pointwise-order} on $X^{\mathrm{div}}$, because a psh metric is determined by its values at divisorial valuations. Thus every ordered pair later in the manuscript is ordered as a pair of \emph{potentials}.

\begin{definition}\label{def: envelopes}
An \emph{obstacle} is an extended-real-valued function that prescribes an upper bound in a family of psh metrics; it need not itself be psh. If $f$ is such an obstacle and the set displayed below is nonempty, its \emph{psh envelope} is
\[P(f):=\left( \sup\left\{\eta\in\PSH^\NA(L)\ \middle|\ \eta(v)\leq f(v)\text{ for all }v\in X^{\mathrm{div}}\right\}\right)^*.\]
The star denotes upper-semicontinuous regularisation. See \cite[\S4.6]{BJ26}.
\end{definition}

The pair $(X,L)$ has the \emph{envelope property} when the supremum defining $P(f)$ is already upper semicontinuous, hence $L$-psh, for every continuous obstacle $f$. This property holds when $X$ is smooth, by the multiplier-ideal argument in non-Archimedean pluripotential theory; see \cite[\S4.4]{BJ25}.

\subsection{Energy, distance, and norms}

We will denote by $E^\NA$, $\operatorname{MA}$, $d_1^\NA$, and $J^\NA$ the \emph{Monge--Amp\`ere energy}, \emph{Monge--Amp\`ere measure}, \emph{Darvas metric}, and \emph{$J$-functional}. Let us explain these notions in more detail.

For an ample or nef test configuration metric $\varphi_{\cX,\cL}$ represented on the canonical compactification $(\cX,\cL)$, we define
\[E^\NA(\varphi_{\cX,\cL}) :=\frac{(\cL^{n+1})}{(n+1)V}.\]
This defines the non-Archimedean Monge--Amp\`ere energy on rational Fubini--Study metrics.

The energy extends first by uniform continuity to $\operatorname{CPSH}^\NA(L)$ and then uniquely to an increasing, upper-semicontinuous functional
\[E^\NA:\PSH^\NA(L)\longrightarrow\mathbb R\cup\{-\infty\}, \qquad E^\NA(\varphi) =\inf_{\substack{\psi\in\operatorname{CPSH}^\NA(L)\\ \psi\geq\varphi}} E^\NA(\psi),\]
which is concave, monotone, and translation equivariant (cf. \cite[\S4.2, Equations (4.2)--(4.5), and (4.8)]{BJ25}):
\begin{equation*}
 \varphi\leq\psi\Rightarrow E^\NA(\varphi)\leq E^\NA(\psi), \qquad E^\NA(\varphi+c)=E^\NA(\varphi)+c.
\end{equation*}
We thus obtain the following finite-energy space of metrics
\[\mathcal E^{1,\NA}(L) :=\{\varphi\in\PSH^\NA(L)\mid E^\NA(\varphi)>-\infty\}.\]
The complete hierarchy of metric classes is
\begin{equation}\label{eq:metric-class-inclusions}
 \mathcal H_{\mathbb Q}^\NA(L) \subset\mathcal H_{\mathbb R}^\NA(L) \subset\operatorname{CPSH}^\NA(L) \subset\mathcal E^{1,\NA}(L) \subset\PSH^\NA(L).
\end{equation}

If $\varphi = \varphi_{\cX,\cL}$ for a relatively nef normal test configuration $(\cX,\cL)$ and $\cX_0=\sum_Eb_EE$, its \emph{Monge--Amp\`ere measure} is
\[\operatorname{MA}(\varphi) =\frac1V\sum_E b_E(\cL^n\cdot E)\,\delta_{v_E}.\]
The coefficients are nonnegative and sum to one. The operator $\operatorname{MA}$ is unchanged by adding constants, strongly continuous, and is the differential of the energy:
\[\left.\frac d{dt}\right|_{t=0} E^\NA\bigl((1-t)\varphi+t\psi\bigr) =\int_{X^{\mathrm{an}}}(\psi-\varphi)\, \operatorname{MA}(\varphi);\]
see \cite[(4.2) and \S4.3]{BJ25}.

For $\varphi,\psi\in\mathcal E^{1,\NA}(L)$, the rooftop envelope $P(\varphi\wedge\psi)$ is defined by decreasing approximation from continuous semipositive metrics. The $d_1^\NA$ metric is then characterised by
\begin{equation}\label{eq:d1-rooftop-background}
 d_1^\NA(\varphi,\psi) =E^\NA(\varphi)+E^\NA(\psi) -2E^\NA\bigl(P(\varphi\wedge\psi)\bigr);
\end{equation}
see \cite[Theorem 5.5(i),(iii)]{BJ25}.

On real Fubini--Study metrics this agrees with the spectral distance for $p=1$. By \cite[Theorem 5.5(ii) and Theorem B]{BJ25} it extends uniquely to a metric inducing the strong topology on $\mathcal E^{1,\NA}(L)$. Since $X$ is smooth and hence has the envelope property, $\mathcal E^{1,\NA}(L)$ is complete and both $\mathcal H_{\mathbb R}^\NA(L)$ and $\mathcal H_{\mathbb Q}^\NA(L)$ are dense in it for $d_1^\NA$. In particular,
\[\varphi\leq\psi \quad\Longrightarrow\quad d_1^\NA(\varphi,\psi)=E^\NA(\psi)-E^\NA(\varphi).\]
It is invariant under simultaneous translation, but $d_1^\NA(\varphi+c,\psi)$ depends on $c$. We therefore use the quotient metric 
\[\underline{d}_1^\NA([\varphi],[\psi])
 :=\inf_{c\in\mathbb R}d_1^\NA(\varphi+c,\psi)\]
on $\mathcal E^{1,\NA}(L)/\mathbb R$, and set
\[\|\varphi\|_1 :=\underline{d}_1^\NA([\varphi],[0])
 =\inf_{c\in\mathbb R}d_1^\NA(\varphi+c,0);\]
see \cite[Equation (5.10)]{BJ25}. Since the translation action is isometric for $d_1^\NA$, we have $d_1^\NA(\varphi+c,0)=d_1^\NA(\varphi,-c)$. Replacing $c$ by $-c$ therefore gives
\[
 \|\varphi\|_1 =\inf_{c\in\mathbb R}d_1^\NA(\varphi,c);
\]
see \cite[\S5.4, Equation (5.10)]{BJ25}.

Hence, the $d_1^\NA$-completion of $\mathcal H^\NA(L)$ is the finite-energy space $\mathcal E^{1,\NA}(L)$. Addition of constants acts on both spaces and leaves all stability numerators unchanged.

\begin{lemma}\label{lem:distance-to-fixed-subset}
We have,
\[\bigl|\|\varphi\|_1-\|\psi\|_1\bigr|
 \leq d_1^\NA(\varphi,\psi).\]
\end{lemma}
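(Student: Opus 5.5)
This is the reverse triangle inequality for the distance to the closed set of constants, so I will use only the metric structure of $d_1^\NA$. I will rely on the identity $\|\varphi\|_1=\inf_{c\in\mathbb R}d_1^\NA(\varphi,c)$ established just before the statement, together with the triangle inequality for $d_1^\NA$ on $\mathcal E^{1,\NA}(L)$. For the triangle inequality I invoke \cite[Theorem 5.5]{BJ25}, which says $d_1^\NA$ is a genuine metric. Throughout, $\varphi,\psi\in\mathcal E^{1,\NA}(L)$. The constants $c$ lie in $\mathcal E^{1,\NA}(L)$ because $E^\NA(c)=c$ is finite.

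First fix $c\in\mathbb R$. The triangle inequality gives
\[
 \|\varphi\|_1\leq d_1^\NA(\varphi,c)\leq d_1^\NA(\varphi,\psi)+d_1^\NA(\psi,c).
\]
Taking the infimum over $c$ on the right-hand side yields
\[
 \|\varphi\|_1\leq d_1^\NA(\varphi,\psi)+\|\psi\|_1 .
\]
Exchanging the roles of $\varphi$ and $\psi$, and using the symmetry of $d_1^\NA$, gives the reverse inequality. Together they give
\[
 \bigl|\|\varphi\|_1-\|\psi\|_1\bigr|\leq d_1^\NA(\varphi,\psi).
\]
All quantities involved are finite: each $\|\cdot\|_1$ is at most $d_1^\NA(\cdot,0)<\infty$, so the subtraction is legitimate.

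There is no real obstacle here. The only point needing care is that the infimum formula with $d_1^\NA(\varphi,c)$ has already been derived from the translation invariance of $d_1^\NA$, as done just above the statement. Once that is in place, the argument is the standard one-line estimate for the distance to a subset.
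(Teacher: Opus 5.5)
Your proposal is correct and is the same argument as the paper's proof. You apply the triangle inequality $d_1^\NA(\varphi,c)\leq d_1^\NA(\varphi,\psi)+d_1^\NA(\psi,c)$ for each constant $c$, take the infimum over $c$ using $\|\cdot\|_1=\inf_{c}d_1^\NA(\cdot,c)$, and then exchange $\varphi$ and $\psi$.
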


\begin{proof}
Let $A:=\{c:c\in\mathbb R\}\subset\mathcal E^{1,\NA}(L)$ be the set of constant metrics. For every $c\in\mathbb R$, the triangle inequality gives
\[
 d_1^\NA(\varphi,c) \leq d_1^\NA(\varphi,\psi)+d_1^\NA(\psi,c).
\]
Since the first term on the right is independent of $c$, taking the infimum over $c\in\mathbb R$ yields
\[
 \|\varphi\|_1 \leq d_1^\NA(\varphi,\psi)+\|\psi\|_1.
\]
Interchanging $\varphi$ and $\psi$ gives the reverse inequality and proves the claim.
\end{proof}

\begin{lemma}\label{lem:zero-norm}
We have $\|\varphi\|_1=0$ if and only if $\varphi$ is constant.
\end{lemma}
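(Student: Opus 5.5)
The plan is to prove the two implications separately, with almost all of the content in the forward direction.

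First the easy direction. If $\varphi\equiv c$ is constant, then $\|\varphi\|_1\le d_1^\NA(\varphi,c)=0$, since $d_1^\NA$ is a metric. As $\|\cdot\|_1\ge 0$, this gives $\|\varphi\|_1=0$.

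For the converse, assume $\|\varphi\|_1=\inf_{c\in\mathbb R}d_1^\NA(\varphi,c)=0$. The strategy is to show that the infimum is attained, after which the fact that $d_1^\NA$ is a metric finishes the argument.

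\begin{enumerate}[label=\textup{(\arabic*)}]
\item I would show that $c\mapsto d_1^\NA(\varphi,c)$ is continuous (indeed $1$-Lipschitz). This follows from the triangle inequality together with $d_1^\NA(c,c')=|E^\NA(c)-E^\NA(c')|=|c-c'|$. That identity uses the formula $d_1^\NA=E^\NA(\psi)-E^\NA(\varphi)$ for ordered pairs, $E^\NA(0)=0$, and translation equivariance.
\item I would show that this function is coercive. Since $d_1^\NA(\varphi,c)\ge d_1^\NA(c,0)-d_1^\NA(\varphi,0)=|c|-d_1^\NA(\varphi,0)$, it tends to $+\infty$ as $|c|\to\infty$.
\item The infimum is therefore a minimum over a compact interval, attained at some $c_0$. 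Then $d_1^\NA(\varphi,c_0)=0$, so $\varphi=c_0$ in $\mathcal E^{1,\NA}(L)$, using that $d_1^\NA$ is a genuine metric on $\mathcal E^{1,\NA}(L)$ by \cite[Theorem 5.5]{BJ25}.
\end{enumerate}

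A subtle point is that equality in $\mathcal E^{1,\NA}(L)$ must mean equality as psh functions. Here I would rely on the fact that $d_1^\NA$ separates points of $\mathcal E^{1,\NA}(L)$, and on the fact that psh functions are determined by their values on $X^{\mathrm{div}}$.

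Alternatively, one could argue without the metric-separation result. With $\psi=P(\varphi\wedge c_0)$, the rooftop formula~\eqref{eq:d1-rooftop-background} gives $E^\NA(\varphi)=E^\NA(\psi)=E^\NA(c_0)$. From there one would use the domination principle, namely that $\psi\le\varphi$ and $E^\NA(\psi)=E^\NA(\varphi)$ imply $\psi=\varphi$, to conclude $\varphi=\psi=c_0$.

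The main obstacle is only this last point, namely that $d_1^\NA(\varphi,c_0)=0$ forces pointwise equality. I plan to settle it by citing that $d_1^\NA$ is a metric (not just a pseudometric) on $\mathcal E^{1,\NA}(L)$, as recorded in \cite[Theorem 5.5]{BJ25}.
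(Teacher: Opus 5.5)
Your argument is correct. The paper gives no proof of this lemma. It is stated as immediate from two conventions: $d_1^\NA$ is a genuine metric on $\mathcal E^{1,\NA}(L)$ \cite[Theorem 5.5]{BJ25}, and $\|\cdot\|_1$ is the quotient distance to the constants \cite[Equation (5.10)]{BJ25}. The one thing that actually needs checking is that the infimum over $c$ is attained, or equivalently that the line of constants is closed in $\mathcal E^{1,\NA}(L)$; otherwise the quotient would only be a pseudometric. Your steps (1)--(3) supply exactly this, using $d_1^\NA(c,c')=|c-c'|$ and the resulting $1$-Lipschitz and coercivity bounds. The ``subtle point'' you raise is not really an issue. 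Elements of $\mathcal E^{1,\NA}(L)$ are functions on $X^{\an}$ and a metric separates points, so $d_1^\NA(\varphi,c_0)=0$ gives $\varphi\equiv c_0$ directly. Your alternative via the rooftop formula and the domination principle is also fine, provided you cite the domination principle.

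A shorter route is available inside the paper through Lemma \ref{lem:norm-comparison}, whose proof does not use the zero-norm lemma.
\begin{itemize}
\item The bound $c_JJ^\NA\leq\|\cdot\|_1$ forces $J^\NA(\varphi)=0$ when $\|\varphi\|_1=0$.
\item Then $\widetilde\varphi:=\varphi-\sup\varphi\leq0$ satisfies $E^\NA(\widetilde\varphi)=0=E^\NA(0)$.
\item The ordered formula gives $d_1^\NA(\widetilde\varphi,0)=0$, that is, $\varphi\equiv\sup\varphi$.
\end{itemize}
This names the constant explicitly. However, it rests on the estimate $I_\omega\leq C_n\|\cdot\|_1$ from \cite[Lemma 5.12]{BJ25}, whereas your argument uses only the metric axioms and the energy of constants. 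Both routes still need the separation property of $d_1^\NA$ at the end, which is what the citation of \cite[Theorem 5.5]{BJ25} provides.
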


The $I$-functional is
\[I_\omega(\varphi,\psi) :=\int_{X^{\mathrm{an}}}(\varphi-\psi)\,
 \operatorname{MA}(\psi) -\int_{X^{\mathrm{an}}}(\varphi-\psi)\, \operatorname{MA}(\varphi)\geq0.\]
It is symmetric and translation invariant, but satisfies only a quasi-triangle inequality. We abbreviate $I_\omega(\varphi):=I_\omega(\varphi,0)$.

The \emph{non-Archimedean $J$-functional} is
\[J^\NA(\varphi)
 :=\sup_{X^{\mathrm{an}}}\varphi-E^\NA(\varphi);\]
see \cite[Definition 7.6, Lemma 7.7 and Proposition 7.8]{BHJ17} and \cite[(2.6)]{BJ25}.

The following Lemma is a useful norm comparison.

\begin{lemma}\label{lem:norm-comparison}
There are constants $c_J,C_J>0$, depending only on the dimension and the fixed normalisations, such that for $\varphi\in\mathcal E^{1,\NA}(L)$ we have
\[c_JJ^\NA(\varphi)\leq\|\varphi\|_1
 \leq C_JJ^\NA(\varphi).\]
In particular,
\[\|\varphi_{\mathcal T}\|_1
 \geq c_JJ^\NA(\varphi_{\mathcal T}).\]
\end{lemma}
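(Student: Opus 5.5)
The plan is to reduce the comparison to the case of a normalised potential and then use the two standard comparisons between the Darvas distance to constants and the $J$-functional. Both sides are translation invariant: $\|\varphi+c\|_1=\|\varphi\|_1$ because the infimum over $c$ absorbs the shift, and $J^\NA(\varphi+c)=J^\NA(\varphi)$ because $E^\NA$ is translation equivariant. So I will assume $\sup\varphi=0$. First I treat $\varphi\in\mathcal H^\NA(L)$, or more generally $\varphi\in\operatorname{CPSH}^\NA(L)$, and at the end extend to $\mathcal E^{1,\NA}(L)$ by $d_1^\NA$-density together with the continuity of both sides: $\|\cdot\|_1$ is $1$-Lipschitz by Lemma~\ref{lem:distance-to-fixed-subset}, and $J^\NA$ is strongly continuous. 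The final assertion about $\varphi_{\mathcal T}$ is just the special case $\varphi=\varphi_{\mathcal T}$.

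\emph{Upper bound.} With $\sup\varphi=0$ we have $\varphi\leq 0$. By the formula $d_1^\NA(\varphi,\psi)=E^\NA(\psi)-E^\NA(\varphi)$ for $\varphi\leq\psi$, this gives
\[
\|\varphi\|_1\leq d_1^\NA(\varphi,0)=-E^\NA(\varphi)=J^\NA(\varphi).
\]
So we may take $C_J=1$.

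\emph{Lower bound.} For each $c$, split into two cases.
\begin{itemize}
\item If $c\geq0$, then $\varphi\leq c$, so $d_1^\NA(\varphi,c)=c-E^\NA(\varphi)\geq J^\NA(\varphi)$.
\item If $c<0$, use the rooftop formula \eqref{eq:d1-rooftop-background}: $d_1^\NA(\varphi,c)=E^\NA(\varphi)+c-2E^\NA(P(\varphi\wedge c))$. Since $P(\varphi\wedge c)\leq\min(\varphi,c)$, this gives $d_1^\NA(\varphi,c)\geq E^\NA(\varphi)-c$. It also gives $d_1^\NA(\varphi,c)\geq c-E^\NA(\varphi)$ by the triangle inequality through $0$.
\end{itemize}
Neither bound in the second case controls $J^\NA$ on its own when $c$ is moderately negative. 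The extra input I need is the classical estimate
\[
\sup\varphi\leq\int\varphi\,\operatorname{MA}(0)+C_n J^\NA(\varphi),
\]
or equivalently a uniform bound comparing $\sup\varphi$ with $E^\NA(\varphi)$ and $\int\varphi\,\operatorname{MA}(0)=\varphi(v_{\mathrm{triv}})$. In the non-Archimedean setting, $\operatorname{MA}(0)=\delta_{v_{\mathrm{triv}}}$ and $\sup\varphi=\varphi(v_{\mathrm{triv}})$. Combined with the standard inequalities $\frac1{n}J\leq I-J\leq nJ$, this yields
\[
d_1^\NA(\varphi,c)\geq\int|\varphi-c|\,\operatorname{MA}(0)\text{-type quantities}\geq c_n J^\NA(\varphi).
\]
Concretely, I would invoke the known comparison $d_1^\NA(\varphi,\psi)\approx\int|\varphi-\psi|(\operatorname{MA}(\varphi)+\operatorname{MA}(\psi))$ from \cite[Theorem 5.5]{BJ25}, with constants depending only on $n$, applied with $\psi=c$. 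This gives
\[
d_1^\NA(\varphi,c)\geq c_n\Bigl(\int|\varphi-c|\,\operatorname{MA}(\varphi)+|\varphi(v_{\mathrm{triv}})-c|\Bigr)=c_n\Bigl(\int|\varphi-c|\,\operatorname{MA}(\varphi)+|c|\Bigr).
\]
Minimising over $c$ and using $\int(-\varphi)\operatorname{MA}(\varphi)\geq J^\NA(\varphi)$, which is the statement $I-J\geq0$ written with $\sup\varphi=0$, gives the lower bound: in both regimes $c\geq\tfrac12\int(-\varphi)\operatorname{MA}(\varphi)$ and $c<\tfrac12\int(-\varphi)\operatorname{MA}(\varphi)$, one of the two terms is at least $\tfrac12\int(-\varphi)\operatorname{MA}(\varphi)$.

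The main obstacle is this lower bound. Its difficulty is locating and correctly normalising the quasi-isometry between $d_1^\NA$ and the $L^1$-type integral $\int|\varphi-\psi|(\operatorname{MA}(\varphi)+\operatorname{MA}(\psi))$ in the non-Archimedean setting. Equivalently, one can use directly the fact from \cite[\S5.4]{BJ25} that $\|\cdot\|_1$ and $J^\NA$ are comparable norms on $\mathcal E^{1,\NA}/\mathbb R$. I would try first to cite that comparison outright and only fall back on the computation above if the normalisations differ.
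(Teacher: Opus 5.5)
Your argument is correct and is essentially the paper's. The upper bound with $C_J=1$, from $d_1^\NA(\varphi,0)=-E^\NA(\varphi)=J^\NA(\varphi)$ for $\sup\varphi=0$, is identical. Your lower bound is the paper's chain $J^\NA(\varphi)\leq I_\omega(\varphi)\leq C_n\|\varphi\|_1$, except that you re-derive $I_\omega(\varphi)=\int(-\varphi)\,\operatorname{MA}(\varphi)\lesssim\|\varphi\|_1$ from the $d_1^\NA$--$L^1$ quasi-isometry, whereas the paper quotes $I_\omega(\varphi)\leq C_n\|\varphi\|_1$ directly from \cite[Lemma 5.12]{BJ25}. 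Your optimisation over $c$ is not even needed, since $I_\omega(\varphi,c)=I_\omega(\varphi)\leq\int|\varphi-c|\,(\operatorname{MA}(\varphi)+\operatorname{MA}(c))$ for every $c$.

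Several parts of your write-up can be dropped. The density reduction is unnecessary because every input already holds on $\mathcal E^{1,\NA}(L)$. The detour through $\sup\varphi\leq\int\varphi\,\operatorname{MA}(0)+C_nJ^\NA(\varphi)$ is an identity here, since $\operatorname{MA}(0)=\delta_{v_{\mathrm{triv}}}$. The inequalities $\frac1nJ\leq I-J\leq nJ$ play no role in the argument.
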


\begin{proof}
Both $J^\NA$ and $\|\cdot\|_1$ are invariant under addition of constants. By the last assertion of \cite[Lemma 5.12]{BJ25}, there is a constant $C_n>0$, depending only on $n$, such that $I_\omega(\varphi)\leq C_n\|\varphi\|_1$ for every $\varphi\in\mathcal E^{1,\NA}(L)$.

Since $\operatorname{MA}(0)=\delta_{v_{\mathrm{triv}}}$ and $\varphi(v_{\mathrm{triv}})=\sup_{X^{\an}}\varphi$, as recalled in \cite[\S\S4.1--4.2]{BJ25}, we have
\[
 \begin{aligned}
 I_\omega(\varphi) &=\sup_{X^{\an}}\varphi -\int_{X^{\an}}\varphi\,\operatorname{MA}(\varphi)\\
 &=J^\NA(\varphi)+E^\NA(\varphi) -\int_{X^{\an}}\varphi\,\operatorname{MA}(\varphi).
 \end{aligned}
\]
By \cite[Inequality (4.5)]{BJ25}, applied to the pair $(0,\varphi)$, we get
\[
 E^\NA(\varphi) \geq\int_{X^{\an}}\varphi\,\operatorname{MA}(\varphi).
\]
Consequently,
\[
 J^\NA(\varphi) \leq I_\omega(\varphi) \leq C_n\|\varphi\|_1,
\]
which proves the lower bound with $c_J=C_n^{-1}$.

For the other direction, we set $\widetilde\varphi:=\varphi-\sup_{X^{\mathrm{an}}}\varphi$. Both $\|\cdot\|_1$ and $J^\NA$ are translation invariant, so $\|\widetilde\varphi\|_1=\|\varphi\|_1$ and $J^\NA(\widetilde\varphi)=J^\NA(\varphi)$. Since $\sup\widetilde\varphi=0$, we have $\widetilde\varphi\leq0$, and hence $P(\widetilde\varphi\wedge0)=\widetilde\varphi$. As $X$ has the envelope property, the rooftop formula \cite[Theorem 5.5(i),(iii)]{BJ25} applies to finite-energy metrics and gives
\begin{align*}
 d_1^\NA(\widetilde\varphi,0)
 &=E^\NA(\widetilde\varphi)+E^\NA(0) -2E^\NA\bigl(P(\widetilde\varphi\wedge0)\bigr)\\
 &=-E^\NA(\widetilde\varphi) =J^\NA(\widetilde\varphi),
\end{align*}
where we used $E^\NA(0)=0$ and $J^\NA(\widetilde\varphi) =\sup\widetilde\varphi-E^\NA(\widetilde\varphi)$. Therefore
\[
 \|\varphi\|_1
 =\inf_{c\in\mathbb R}d_1^\NA(\widetilde\varphi,c)
 \leq d_1^\NA(\widetilde\varphi,0)
 =J^\NA(\varphi),
\]
which proves the upper bound with $C_J=1$.
\end{proof}

For $\eta\in N^1(X)_{\mathbb R}$, the energy depends smoothly on the polarisation as long as $\omega+t\eta$ remains ample. When the same model function is interpreted relative to these nearby polarisations, its derivative at the fixed class $\omega$ is
\[\nabla_\eta E^\NA_\omega(\varphi):=\left.\frac d{dt}\right|_{t=0} E^\NA_{\omega+t\eta}(\varphi).\]
The derivative is linear in $\eta$ and translation invariant in $\varphi$.
We let $\tr_\omega(\eta):=\frac{n(\eta\cdot\omega^{n-1})}{V}$
and, if $\varphi$ is represented on a model $\cY$ by a nef class $\cL_\varphi$, denote by $\eta_{\cY}$ the horizontal pullback of $\eta$. By \cite[Lemma 2.4 and (2.6)]{BJ23},
\[
 \nabla_\eta E_\omega^\NA(\varphi)
 =\frac1V\bigl(\eta_{\cY}\cdot\cL_\varphi^n\bigr)
 -\tr_\omega(\eta)E_\omega^\NA(\varphi).
\]

\subsection{\texorpdfstring{$\widehat{\mathrm K}$}{K-hat} stability}\label{sec: k-hat}

Let $A_X$ be the log-discrepancy function. For
$\varphi\in\mathcal E^{1,\NA}(L)$ we define the \emph{non-Archimedean entropy}
\[
 \Ent^\NA(\varphi)
 :=\int_{X^{\mathrm{an}}}A_X\,\operatorname{MA}(\varphi).
\]
If $\theta=c_1(-K_X)$, recall that $\nabla_{-K_X}E^\NA=\nabla_\theta E^\NA=-\nabla_{K_X}E^\NA$; see \cite[Lemma 2.4 and (2.6)]{BJ23}. We thus define the \emph{non-Archimedean Mabuchi functional} following \cite[(4.4)--(4.5)]{BJ23} as 
\[\mathrm{K}^\NA(\varphi):=\Ent^\NA(\varphi)+\nabla_{K_X}E^\NA(\varphi)
 =\Ent^\NA(\varphi)-\nabla_{-K_X}E^\NA(\varphi).\]
The entropy may equal $+\infty$, while $\nabla_{K_X}E^\NA$ is finite-valued and strongly continuous.

\begin{remark}\label{rem: DF vs mabuchi}
For a normal ample test configuration $\mathcal T$,
\[
 \DF(\mathcal T) =\mathrm K^\NA(\varphi_{\mathcal T}) +\frac{\bigl((\mathcal X_0-(\mathcal X_0)_{\mathrm{red}}) \cdot\mathcal L^n\bigr)}{V}.
\]
In particular, $\DF(\mathcal T)\geq\mathrm K^\NA(\varphi_{\mathcal T})$.
By \cite[Proposition 8.2 and Remark 8.3]{BHJ17}, K-semistability and uniform K-stability are equivalently expressed as $\mathrm K^\NA(\varphi)\geq0$ and $\mathrm K^\NA(\varphi)\geq\delta J^\NA(\varphi)$, respectively, on ample test-configuration metrics, for some $\delta>0$ in the uniform case. Replacing $J^\NA$ by an equivalent norm changes the optimal numerical constant.
\end{remark}

Let $\mathcal P_{\mathbb R}\subset\mathcal E^{1,\NA}(L)$ denote the closed locus of real product metrics of \cite[Definition 4.12 and Corollary 4.14]{BJ26}. For any nonempty subset $\mathcal A\subset\mathcal E^{1,\NA}(L)$, we write
\[
 d_1^\NA(\varphi,\mathcal A)
 :=\inf_{\psi\in\mathcal A}d_1^\NA(\varphi,\psi).
\]

The following is one of the key notions of this paper defined in \cite[Definition 8.11 and Remark 8.12]{BJ26}.

\begin{definition}\label{def: khat stability}
Put $G:=\Aut^0(X,L)$.
The pair $(X,L)$ is $\widehat{\mathrm K}$-semistable if $\mathrm K^\NA(\varphi)\geq0$ for every $\varphi\in\mathcal E^{1,\NA}(L)$.

The pair $(X,L)$ is $\widehat{\mathrm K}$-polystable if it is $\widehat{\mathrm K}$-semistable and $\mathrm K^\NA(\varphi)=0$ if and only if $\varphi\in\mathcal P_{\mathbb R}$. It is uniformly $\widehat{\mathrm K}$-polystable if $G$ is reductive and, for some $\sigma>0$,
\begin{equation}\label{eq:Kb-uniform}
 \mathrm{K}^\NA(\varphi) \geq\sigma d_1^\NA(\varphi,\mathcal P_{\mathbb R})
\end{equation}
for all $\varphi\in\mathcal E^{1,\NA}(L)$. If $G$ is trivial, the product locus consists of constants and the right-hand side is $\sigma\|\varphi\|_1$.
\end{definition}

\subsection{Big models, model functions, and psh envelopes}
\label{subsec:big-model-background}

\begin{definition}
 A \emph{model over $X\times\mathbb P^1$} is a normal variety $\cX$ with a projective birational morphism $\rho:\cX\to X\times\mathbb P^1$ that is an isomorphism away from the central fibre and is compatible with the $\mathbb G_m$-action.
\end{definition}
Every test configuration has a canonical equivariant rational map to $X\times \mathbb P^1$. By normalising the graph, and then resolving if necessary, it admits a representative dominating both the original test configuration and $X\times \mathbb P^1$. The original test configuration is itself a model over $X\times \mathbb P^1$ only when this rational map extends to a projective birational morphism. Conversely, a model $\cX\to X\times \mathbb P^1$ can be equipped with a relatively ample $\mathbb Q$-line bundle extending $L$, and hence underlies a test configuration of $(X,L)$. However, a given line class $\cL$ on $\cX$ extending $L$ may be merely relatively big rather than relatively ample; in this case the pair $(\cX,\cL)$ is a big model but not an ample test configuration.

\begin{definition}
 A \emph{ dominant big model} for $(X,L)$ is a normal $\mathbb G_m$-equivariant model $\rho:\cX\to X\times\mathbb P^1$ together with a relatively big $\mathbb Q$-line class $\cL$ extending $L$.
\end{definition}

It defines a model function $\varphi_{\cL}$ and the psh metric
\[\varphi_{\cX,\cL}:=P(\varphi_{\cL})
 \in\mathcal E^{1,\NA}(L).\]
Note that the notation is similar to the Fubini--Study metric defined before, but they only coincide if $\cL$ is (semi)-ample. If $\cL$ is merely big, positive intersection products replace ordinary intersection numbers.

On a dominant model $\rho:\cX\to X\times\mathbb P^1$, we use the relative logarithmic canonical class
\begin{equation}\label{eq:relative-log-canonical-background}
 K^{\log}_{\cX/X\times\mathbb P^1}
 :=K_{\cX/X\times\mathbb P^1}
 +(\cX_0)_{\mathrm{red}}-\cX_0
 =K_{\cX}+(\cX_0)_{\mathrm{red}}
 -\rho^*\bigl(K_{X\times\mathbb P^1}
 +X\times\{0\}\bigr).
\end{equation}
\subsection{Positive intersections on big models}\label{subsec:DR-positive-intersection}
For a big $\mathbb R$-class $\alpha$ on a projective variety $Y$, $\langle\alpha^k\rangle$ denotes its positive intersection product, namely the numerical class on the Riemann--Zariski space obtained as the supremum of products of nef approximants to pullbacks of $\alpha$ on higher birational models. If $D$ is an $\mathbb R$-Cartier divisor, $\langle\alpha^{\dim Y-1}\rangle\cdot D$ denotes the resulting pairing with $D$. These expressions recover the ordinary intersection numbers when $\alpha$ is nef, while $\langle\alpha^{\dim Y}\rangle=\operatorname{vol}(\alpha)$; see \cite[\S\S2.1.2--2.1.4, Equations (2.1), (2.2), and (2.7)]{DR26}.

\subsection{\texorpdfstring{$\mathrm{K}^\beta$}{K-beta}-stability} \label{sec:Kbeta-dictionary}

We only consider rational $\beta>1$. Let $\mathcal T=(\cX,\cL)$ be a normal ample test configuration with associated metric $\varphi_{\mathcal T}$. We choose a smooth equivariant representative $(\cY,\cM)$ dominating both $\mathcal T$ and $X\times\mathbb P^1$, and put $D:=K^{\log}_{\cY/X\times\mathbb P^1}$, $\cM_\beta:=\cM+\beta^{-1}D$. We also define the \emph{$\beta$-entropy} $\Ent_{\mathcal T}^\beta $ following \cite[Theorem 9.5]{DZ26} as
\[
 \Ent_{\mathcal T}^\beta :=\frac{\beta}{(n+1)V} \left(\langle\cM_\beta^{n+1}\rangle-\cM^{n+1}\right).
\]
Similarly we define the \emph{$\beta$-Mabuchi functional}
\[
 \mathrm K^\beta(\varphi_{\mathcal T}) :=\Ent_{\mathcal T}^\beta +\nabla_{K_X}E^\NA(\varphi_{\mathcal T}).
\]
We note that these are not the original definitions as in \cite{DZ26}, but their Theorem 9.5 shows that for a rational $\beta>1$ they agree with their radial invariant and are independent of the chosen dominant model.

\begin{definition}[{cf. \cite[p. 3, (10)]{DZ26}}]
\label{def: Kbeta stability}
We fix $\beta\in\mathbb Q_{>1}$. We say that $(X,L)$ is \emph{uniformly $\mathrm K^\beta$-stable} if there exists a constant $\gamma=\gamma_\beta>0$, independent of the test configuration, such that $\mathrm K^\beta(\varphi_{\mathcal T})\geq\gamma J^\NA(\varphi_{\mathcal T})$ for every normal ample test configuration $\mathcal T$. 
\end{definition}

A natural extension of Definition \ref{def: Kbeta stability} is the following semistability notion.

\begin{definition}\label{def:Kbeta-semistability}
Fix $\beta\in\mathbb Q_{>1}$. We say that $(X,L)$ is \emph{$\mathrm K^\beta$-semistable} if $\mathrm K^\beta(\varphi_{\mathcal T})\geq0$ for every normal ample test configuration $\mathcal T$.
\end{definition}

This definition did not appear originally in \cite{DZ26}, although it is a natural extension. We will see in Section \ref{sec:k-ss} that its relation with ordinary K-semistability requires an asymptotic refinement.

\begin{proposition}[{\cite[Theorem 9.5, Corollary 9.6]{DZ26}}]\label{prop:Kbeta-dictionary}
Let $1<\beta_1\leq\beta_2$ be rational. Every normal ample test configuration $\mathcal T$ satisfies
\[
 \mathrm K^{\beta_1}(\varphi_{\mathcal T}) \leq\mathrm K^{\beta_2}(\varphi_{\mathcal T}) \leq\mathrm K^\NA(\varphi_{\mathcal T}) \leq\operatorname{DF}(\mathcal T),
\]
and $\mathrm K^\beta(\varphi_{\mathcal T})\nearrow\mathrm K^\NA(\varphi_{\mathcal T})$ as $\beta\to\infty$. Consequently, uniform $\mathrm K^\beta$-stability implies ordinary uniform K-stability.
\end{proposition}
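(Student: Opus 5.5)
The approach is to reduce everything to one-variable properties of the volume function on a fixed smooth dominant model. Fix $\mathcal T$ and a smooth equivariant model $(\cY,\cM)$ dominating $\mathcal T$ and $X\times\mathbb P^1$, with $D=K^{\log}_{\cY/X\times\mathbb P^1}$ as in \S\ref{sec:Kbeta-dictionary}. The term $\nabla_{K_X}E^\NA(\varphi_{\mathcal T})$ appears in both $\mathrm K^\beta$ and $\mathrm K^\NA$. So the first two inequalities and the convergence statement concern only $\Ent^\beta_{\mathcal T}$ versus $\Ent^\NA(\varphi_{\mathcal T})$. Put
\[
 g(t):=\frac{\langle(\cM+tD)^{n+1}\rangle}{(n+1)V},\qquad t\in[0,1).
\]
Then $\Ent^\beta_{\mathcal T}=\bigl(g(1/\beta)-g(0)\bigr)/(1/\beta)$ is the difference quotient of $g$ at $0$ with step $1/\beta$, since $\cM$ is nef and $g(0)=\cM^{n+1}/((n+1)V)$.

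\textbf{Step 1 (derivative at $0$).} By differentiability of the volume (the positive-intersection formalism of \S\ref{subsec:DR-positive-intersection}), we have
\[
 g'(t)=\frac{1}{V}\,\langle(\cM+tD)^n\rangle\cdot D,
\qquad\text{so}\qquad
 g'(0)=\frac{(\cM^n\cdot D)}{V}.
\]
Expand $D$ along the components $E$ of $\cY_0$. The coefficient of $E$ in $K^{\log}_{\cY/X\times\mathbb P^1}$ is $b_E A_X(v_E)$, by the usual computation of log discrepancies of the Rees valuations. Using the formula for $\operatorname{MA}(\varphi_{\mathcal T})$, this gives $g'(0)=\int A_X\,\operatorname{MA}(\varphi_{\mathcal T})=\Ent^\NA(\varphi_{\mathcal T})$.

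\textbf{Step 2 (concavity of $g$).} I will show that $t\mapsto\langle(\cM+tD)^n\rangle\cdot D$ is nonincreasing. For a nef class $\cN$ and a vertical divisor $D$, the relative Hodge index theorem (Zariski's lemma on the fibres over $\mathbb P^1$) gives $(\cN^{n-1}\cdot D^2)\le0$. Approximate $\cM+tD$ by nef classes on higher models via Fujita approximation. The pulled-back $D$ stays vertical, so the second difference of $g$ is a limit of nonpositive quantities. Hence $g$ is concave on $[0,1)$.

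Concavity then does the work. The difference quotient $\bigl(g(s)-g(0)\bigr)/s$ is nonincreasing in $s=1/\beta$. This gives $\mathrm K^{\beta_1}\le\mathrm K^{\beta_2}$ for $\beta_1\le\beta_2$, and the quotient is bounded above by $g'(0)$, so $\mathrm K^{\beta}\le\mathrm K^\NA$. As $s\downarrow0$ the quotient converges to $g'(0)$, which gives $\mathrm K^\beta(\varphi_{\mathcal T})\nearrow\mathrm K^\NA(\varphi_{\mathcal T})$.

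\textbf{Step 3 (remaining claims).} The inequality $\mathrm K^\NA\le\DF$ is Remark \ref{rem: DF vs mabuchi}. Suppose now that $\mathrm K^\beta(\varphi_{\mathcal T})\ge\gamma J^\NA(\varphi_{\mathcal T})$ for all $\mathcal T$. Then $\mathrm K^\NA(\varphi_{\mathcal T})\ge\gamma J^\NA(\varphi_{\mathcal T})$ for all $\mathcal T$, which is uniform K-stability by Remark \ref{rem: DF vs mabuchi}.

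\textbf{Main obstacle.} I expect Step 2 to be the hardest. There are two issues. First, the Hodge-index sign must survive passage to positive intersection products on big (non-nef) classes. Second, differentiability in Step 1 holds only while $\cM+tD$ stays big, which is why $t=1/\beta<1$ is needed. I would first try to handle both issues with the Boucksom--Favre--Jonsson derivative formula combined with Fujita approximation, as indicated in Step 2.
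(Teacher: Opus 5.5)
\textbf{The overall route is viable and differs from the paper's, but Step 2 has a gap.} The paper's proof is purely by citation: \cite[Theorem 9.5, Corollary 9.6]{DZ26} for the monotonicity, the convergence and $\mathrm K^\beta\leq\mathrm K^\NA$, and \cite[Propositions 7.15 and 8.2]{BHJ17} for the rest. Your reduction is sound. $\Ent^\beta_{\mathcal T}$ is indeed the slope $(g(s)-g(0))/s$ with $s=1/\beta$. Step 1 is correct: the Boucksom--Favre--Jonsson formula applies at $t=0$, since $\cM+c\cY_0$ is nef and big for $c\gg0$. Step 3 is fine.

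\textbf{Why Step 2 fails as written.} Step 2 carries the whole statement, and its mechanism does not work.
\begin{enumerate}[label=(\alph*)]
\item The volume is in general only $C^1$ on the big cone. So there is no formula for $g''$ in terms of $\langle(\cM+tD)^{n-1}\rangle\cdot D^2$.
\item Used term by term, Fujita approximation only bounds each $g(t)$ from below. It therefore cannot control a second difference $g(t+h)-2g(t)+g(t-h)$.
\item A single approximant $N$ of $\cM+tD$ does not help either. Since $N+h\pi^*D$ need not be nef for $h\neq0$, you get neither $g(t+h)\geq(N+h\pi^*D)^{n+1}/((n+1)V)$ nor a sign for $(N+h\pi^*D)^{n-1}\cdot D^2$.
\end{enumerate}
Separately, your second ``obstacle'' is not real. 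Since $D$ is vertical, $\cM+tD$ restricts to $L$ on the generic fibre and is big for every real $t$ in the translation-normalised sense. So $\beta>1$ plays no role in your argument.

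\textbf{The missing idea: midpoint concavity by averaging \emph{vertical} approximants.} Let $N_0,N_1$ be relatively nef classes on a common model. Suppose each equals the pullback of $p_1^*L$ plus a vertical divisor, and is dominated by $\cM+t_0D$, resp.\ $\cM+t_1D$. Equivalently, they are PL psh metrics below $f_{t_i}:=\varphi_{\mathcal T}+t_iA_X\circ r_\cY$. Then $N_s:=(1-s)N_0+sN_1$ is admissible for $t_s:=(1-s)t_0+st_1$. Moreover $u\mapsto N_u^{n+1}$ is concave by the relative Hodge index theorem, precisely because $N_1-N_0$ is vertical. Taking suprema gives $g(t_s)\geq(1-s)g(t_0)+sg(t_1)$.

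This requires that the positive products are computed by such vertical approximants. That is the identification $g(t)-g(0)=E^\NA(P(f_t))-E^\NA(\varphi_{\mathcal T})$ of \cite[Lemma 9.1, Proposition 9.4]{DZ26}, already used in Lemma \ref{lem:distance-entropy}. Granting it, the cleanest argument is non-Archimedean:
\begin{enumerate}[label=(\roman*)]
\item $f_t$ is affine in $t$.
\item Hence $P(f_{t_s})\geq(1-s)P(f_{t_0})+sP(f_{t_1})$, by convexity of $\PSH^\NA(L)$ and $P(f)\leq f$.
\item $E^\NA$ is monotone and concave, so $t\mapsto E^\NA(P(f_t))$ is concave.
\end{enumerate}

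\textbf{Comparison once repaired.} Your proof then derives the whole proposition from ``concavity plus first derivative at $0$''. This is more transparent than the paper's citation, at the cost of reproving what \cite{DZ26} supplies.
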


\begin{proof}
The monotonicity and convergence are \cite[Theorem 9.5]{DZ26}, while the second inequality follows from \cite[Corollary 9.6]{DZ26}, since $\mathrm K^\NA$ and $\mathrm K^\beta$ contain the same polarisation derivative. The final inequality is \cite[Proposition 7.15]{BHJ17}.

If $(X,L)$ is uniformly $\mathrm K^\beta$-stable, then there exists $\gamma>0$ such that every normal ample test configuration satisfies
\[
 \mathrm K^\NA(\varphi_{\mathcal T}) \geq\mathrm K^\beta(\varphi_{\mathcal T}) \geq\gamma J^\NA(\varphi_{\mathcal T}).
\]
Thus $(X,L)$ is uniformly K-stable by \cite[Proposition 8.2]{BHJ17}.
\end{proof}

\begin{corollary}\label{prop:finite-beta-one-way}
Let $\beta_0>1$ be rational. If $(X,L)$ is $\mathrm K^{\beta_0}$-semistable, then it is K-semistable. Moreover, it is $\mathrm K^\beta$-semistable for every rational $\beta\geq\beta_0$.
\end{corollary}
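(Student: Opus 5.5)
This is a direct consequence of the monotonicity chain in Proposition \ref{prop:Kbeta-dictionary}, applied to one test configuration at a time.

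\textbf{K-semistability.} Let $\mathcal T$ be an arbitrary normal ample test configuration. By Proposition \ref{prop:Kbeta-dictionary} with $\beta_1=\beta_0$,
\[
0\leq \mathrm K^{\beta_0}(\varphi_{\mathcal T})\leq \mathrm K^\NA(\varphi_{\mathcal T}) \leq \operatorname{DF}(\mathcal T),
\]
where the first inequality is the $\mathrm K^{\beta_0}$-semistability hypothesis. Hence $\mathrm K^\NA\geq 0$ on all ample test-configuration metrics. By Remark \ref{rem: DF vs mabuchi}, which quotes \cite[Proposition 8.2]{BHJ17}, this is equivalent to K-semistability. One can also skip that citation, since the chain already gives $\operatorname{DF}(\mathcal T)\geq0$ for every normal ample test configuration.

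\textbf{Larger $\beta$.} For the second assertion, fix a rational $\beta\geq\beta_0$. Applying the first inequality of Proposition \ref{prop:Kbeta-dictionary} with $\beta_1=\beta_0$ and $\beta_2=\beta$ gives
\[
\mathrm K^\beta(\varphi_{\mathcal T})\geq \mathrm K^{\beta_0}(\varphi_{\mathcal T})\geq 0
\]
for every normal ample test configuration $\mathcal T$. This is exactly Definition \ref{def:Kbeta-semistability} at the parameter $\beta$.

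\textbf{Possible obstacle.} The only point needing care is that both definitions quantify over the same class of testing objects, namely normal ample test configurations. The monotonicity from \cite[Theorem 9.5]{DZ26} must also be independent of the chosen dominating smooth model $(\cY,\cM)$. Both facts are built into the conventions of Section \ref{sec:Kbeta-dictionary}, so there is no genuine obstacle.
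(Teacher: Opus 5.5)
Your proof is correct and follows essentially the same route as the paper: the monotonicity chain $\mathrm K^{\beta_0}(\varphi_{\mathcal T})\leq\mathrm K^{\beta}(\varphi_{\mathcal T})\leq\mathrm K^\NA(\varphi_{\mathcal T})$ from \cite[Theorem 9.5]{DZ26} (packaged in Proposition \ref{prop:Kbeta-dictionary}), then \cite[Proposition 8.2]{BHJ17} to pass from $\mathrm K^\NA\geq0$ to K-semistability. Your side remark is also valid: the further inequality $\mathrm K^\NA(\varphi_{\mathcal T})\leq\operatorname{DF}(\mathcal T)$ gives $\operatorname{DF}(\mathcal T)\geq0$ directly, which is the classical definition of K-semistability.
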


\begin{proof}
By \cite[Theorem 9.5]{DZ26}, every normal ample test configuration $\mathcal T$ satisfies
\[
 \mathrm K^{\beta_0}(\varphi_{\mathcal{T}}) \leq\mathrm K^{\beta}(\varphi_{\mathcal{T}}) \leq\mathrm K^\NA(\varphi_{\mathcal{T}})
\]
for all rational $\beta\geq\beta_0$. If the first term is nonnegative for every $\mathcal T$, then so are the other two terms. The middle term proves $\mathrm K^\beta$-semistability for every $\beta\geq\beta_0$, while the last term gives ordinary K-semistability by \cite[Proposition 8.2]{BHJ17}.
\end{proof}

\section{Comparison of the Mabuchi functionals}\label{sec: mixed holder}

We will use intersection-theoretic methods to prove Theorem \ref{thm:algebraic-polarisation-error-intro} and compare $\mathrm K^\NA$ with $\mathrm K^\beta$. We retain the notation $V=(L^n)$ from the preceding section.

\subsection{Passage to a smooth dominant model}

\begin{lemma}\label{lem:smooth-domination}
Let $\mathcal T=(\cX,\cL)$ be a normal ample test configuration for $(X,L)$, compactified over $\mathbb P^1$. There exist a smooth $\mathbb C^*$-equivariant variety $\cY$, equivariant projective birational morphisms
\[
 \mu:\cY\longrightarrow\cX, \qquad \rho:\cY\longrightarrow X\times\mathbb P^1,
\]
and $\cM:=\mu^*\cL$ such that $\cY_{0,\mathrm{red}}$ has simple normal crossings and $(\cY,\cM)$ is a dominant semiample test configuration. It represents the same non-Archimedean potential $\varphi_{\mathcal T}$. Consequently
\[
 \mathrm K^\beta(\varphi_{\mathcal T}),\quad \Ent_{\mathcal T}^\beta,\quad
 J^\NA(\varphi_{\mathcal T}),\quad E^\NA(\varphi_{\mathcal T}),
 \quad\text{and}\quad \|\varphi_{\mathcal T}\|_1
\]
are unchanged by this replacement.
\end{lemma}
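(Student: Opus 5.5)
The construction has two steps: build $\cY$, then check that each listed quantity depends only on the non-Archimedean potential.

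\textbf{Construction.} Start from the canonical $\mathbb G_m$-equivariant birational map $\cX\dashrightarrow X\times\mathbb P^1$, which is the identity away from the central fibre. Let $\cX'$ be the normalisation of the closure of its graph in $\cX\times_{\mathbb P^1}(X\times\mathbb P^1)$. It is normal, $\mathbb G_m$-stable and projective over both factors.

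Next apply equivariant resolution of singularities (Hironaka, in the functorial form of Bierstone--Milman or Kollár). This gives a smooth $\cY$ with a $\mathbb G_m$-equivariant projective birational morphism $\cY\to\cX'$. We may arrange that it is an isomorphism over the smooth locus away from the central fibre, which over $\mathbb P^1\setminus\{0\}$ is $X\times(\mathbb P^1\setminus\{0\})$ and already smooth. Further blow up equivariantly so that the reduced total transform of $\cX'_0$ is a simple normal crossing divisor; this is equivariant log resolution. Composing gives $\mu$ and $\rho$.

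Set $\cM=\mu^*\cL$. Since $\cL$ is relatively ample, $\cM$ is relatively semiample, because it is the pullback of a relatively ample class. It extends $p_1^*L$ away from $0$, and $\rho$ is a model over $X\times\mathbb P^1$. Hence $(\cY,\cM)$ is a dominant semiample test configuration.

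\textbf{Same potential.} By the description of $\varphi_{\mathcal T}$ in \S\ref{subsec:FS-metrics} through a common dominating model, $\varphi_{\mathcal T}(v_E)=b_E^{-1}\ord_E(D)$ with $D=\nu^*\cL-\rho^*p_1^*L$. This formula is compatible with further pullback: a divisor on a higher model gives the same value at its valuation, by \cite[Lemma A.12]{BJ25}. So $(\cY,\cM)$ defines the same function on $X^{\mathrm{div}}$, hence the same psh potential. Alternatively, the Fubini--Study expression is computed from sections of $m\cL$, which pull back isomorphically to sections of $m\cM$ by the projection formula, since $\mu_*\mathcal O_\cY=\mathcal O_\cX$ by normality.

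\textbf{Invariance of the quantities.} Every listed quantity depends only on the potential $\varphi_{\mathcal T}$.
\begin{itemize}
\item For $E^\NA$, use the projection formula: $(\cM^{n+1})=(\cL^{n+1})$, since $\mu$ is birational.
\item $J^\NA$ and $\|\cdot\|_1$ are defined purely from $\varphi$, $\sup\varphi$ and $E^\NA$.
\item For $\nabla_{K_X}E^\NA$, the formula $\frac1V(\eta_\cY\cdot\cM^n)-\tr_\omega(\eta)E^\NA$ is again pullback-invariant by the projection formula.
\item For $\Ent^\beta_{\mathcal T}$, and hence $\mathrm K^\beta$, the definition already chooses a smooth dominant representative. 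Independence of that choice is exactly the content of \cite[Theorem 9.5]{DZ26}, which we invoke.
\end{itemize}

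\textbf{Main obstacle.} The only delicate point is equivariance of the resolution together with the SNC condition on the reduced central fibre. I would handle it by citing functorial resolution, which commutes with the $\mathbb G_m$-action. One also needs the remark that $\cY_0$ is Cartier, being a fibre of $\cY\to\mathbb P^1$, so $(\cY_0)_{\mathrm{red}}$ makes sense as a divisor to make SNC.
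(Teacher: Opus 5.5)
Your proposal is correct and takes the same route as the paper. The paper handles this lemma only by citing the standard equivariant-resolution reduction of \cite[\S2]{DZ26} and the pullback-invariance of the non-Archimedean metric built into \cite[Definition 6.1]{BHJ17}. You have spelled out those steps: graph normalisation, functorial equivariant log resolution of the central fibre, and the projection formula for $E^\NA$ and $\nabla_{K_X}E^\NA$. Your appeal to \cite[Theorem 9.5]{DZ26} for the model-independence of $\Ent_{\mathcal T}^\beta$ is also exactly what the paper relies on.
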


This is the standard equivariant-resolution reduction; see \cite[\S2, under ``Compactification and domination'']{DZ26}. The invariance of the non-Archimedean metric under equivariant pullback is built into \cite[Definition 6.1]{BHJ17}.

\subsection{The log-discrepancy obstacle, its psh envelope, and independence of the resolution} \label{sec:model-independence}

We take a normal test configuration $\mathcal T=(\cX,\cL)$ with associated metric $\varphi:=\varphi_{\mathcal T}$ and, using Lemma \ref{lem:smooth-domination}, fix a smooth dominant model $(\cY,\cM)$. For $\beta\in\mathbb Q_{>1}$, we set $\cM_\beta:=\cM+\frac1\beta K^{\log}_{\cY/X\times\mathbb P^1}$ and $\mathcal T_\beta:=(\cY,\cM_\beta)$.

The SNC divisor $(\cY_0)_{\mathrm{red}}$ has a dual complex $\Delta_\cY$. Its vertices correspond to irreducible components of the central fibre, while a face records a nonempty intersection of components; points of a face encode the corresponding monomial valuations. Thus, there is a canonical retraction
\[
 r_\cY:X^{\mathrm{an}}\longrightarrow\Delta_\cY
\]
that sends a valuation to the monomial valuation having the same values on local equations of those components. It fixes $\Delta_\cY$ pointwise. In particular, $A_X\circ r_\cY$ is a continuous piecewise-affine model function whose vertex values are the log discrepancies of the components. For more details see \cite[Appendix A]{BJ23} and \cite[Remark 9.2]{DZ26}.

Although $\varphi$ is psh, the discrepancy term need not preserve psh semipositivity, so $f_{\beta,\cY}$ is not necessarily a psh metric. Instead we make the following definition.

\begin{definition}\label{def:log-discrepancy-obstacle}
For a chosen smooth dominant model $(\cY,\cM)$, we define the \emph{log-discrepancy obstacle}
\[
 f_{\beta,\cY}:=\varphi+\frac1\beta A_X\circ r_\cY
\]
and set
\[
 \varphi_{\beta,\cY}:=P(f_{\beta,\cY})
 =P\left(\varphi+\frac1\beta A_X\circ r_\cY\right).
\]
\end{definition}

By \cite[Lemma 9.1, Remark 9.2 and (74)]{DZ26}, $\varphi_{\beta,\cY}$ is precisely the non-Archimedean psh metric associated to the big model
\[
 \mathcal T_\beta =\left(\cY,\cM+\frac1\beta K^{\log}_{\cY/X\times\mathbb P^1}\right).
\]

\begin{lemma}\label{lem:model-independence}
Let $\cY_1$ and $\cY_2$ be two smooth dominant models of $\mathcal T$. Then $\varphi_{\beta,\cY_1}=\varphi_{\beta,\cY_2}$. We therefore denote their common metric by $\varphi_\beta$.

Moreover, for every smooth dominant model $\cY$,
$\varphi\leq\varphi_\beta\leq f_{\beta,\cY}$ and $0\leq\varphi_\beta-\varphi\leq\frac1\beta A_X\circ r_\cY$.
\end{lemma}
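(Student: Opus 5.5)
The plan is to first establish the inequalities for a single smooth dominant model $\cY$, and then deduce independence of the resolution from them. Since $A_X\geq0$ on $X^{\mathrm{an}}$ and $\beta>0$, we have $\varphi\leq f_{\beta,\cY}$. Here $\varphi=\varphi_{\mathcal T}$ is a rational Fubini--Study metric, hence $L$-psh, so it is a competitor in the supremum defining $P(f_{\beta,\cY})$. This gives $\varphi\leq\varphi_{\beta,\cY}$.

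For the upper bound, I note that $f_{\beta,\cY}$ is continuous: $\varphi$ is continuous and $A_X\circ r_\cY$ is a continuous piecewise-affine model function. Since $X$ is smooth, the envelope property (\cite[\S4.4]{BJ25}) says the supremum is already usc. Every competitor satisfies $\eta\leq f_{\beta,\cY}$ on $X^{\mathrm{div}}$, so the unregularised supremum is $\leq f_{\beta,\cY}$ on $X^{\mathrm{div}}$. By density of $X^{\mathrm{div}}$, continuity of $f_{\beta,\cY}$, and usc, this holds everywhere. (Alternatively, $P(f)\leq f$ for continuous $f$ follows directly by checking on $X^{\mathrm{div}}$, since the star regularisation of a family bounded by a continuous function remains bounded by it.) Subtracting $\varphi$ gives $0\leq\varphi_\beta-\varphi\leq\frac1\beta A_X\circ r_\cY$.

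For independence, I reduce to the case where $\cY_2$ dominates $\cY_1$, by passing to a common smooth equivariant resolution $\cY_3$ dominating both, whose reduced central fibre is SNC (Lemma \ref{lem:smooth-domination}). So assume $\pi:\cY_2\to\cY_1$. The key fact is the comparison of the retractions: $A_X\circ r_{\cY_1}\geq A_X\circ r_{\cY_2}\geq A_X$. This is the standard monotonicity in \cite[Appendix A]{BJ23}: $A_X$ is affine on each face of $\Delta_{\cY}$ and convex under refinement, and $A_X=\sup_\cY A_X\circ r_\cY$ increases along the net of models. This gives $f_{\beta,\cY_2}\leq f_{\beta,\cY_1}$, hence $\varphi_{\beta,\cY_2}\leq\varphi_{\beta,\cY_1}$.

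The reverse inequality is the main obstacle. I would derive it from the identification above via \cite[Lemma 9.1, Remark 9.2]{DZ26}: $\varphi_{\beta,\cY}$ is the metric of the big model $(\cY,\cM+\beta^{-1}K^{\log}_{\cY/X\times\mathbb P^1})$. On $\cY_2$ we have $\cM_2=\pi^*\cM_1$, and since both models are smooth with SNC reduced central fibres, $K^{\log}_{\cY_2}=\pi^*K^{\log}_{\cY_1}+E$ with $E\geq0$ effective and $\pi$-exceptional (the log canonical pair's log discrepancies are $\geq0$). Thus the line class on $\cY_2$ equals $\pi^*(\text{class on }\cY_1)+\beta^{-1}E$.

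Adding an effective exceptional divisor does not change the section rings, so the two big models define the same psh envelope (\cite[\S4.6]{BJ26}). Hence $\varphi_{\beta,\cY_1}=\varphi_{\beta,\cY_2}$. Equivalently, on the pluripotential side, any psh $\eta\leq f_{\beta,\cY_1}$ on $X^{\mathrm{div}}$ already satisfies $\eta\leq f_{\beta,\cY_2}$, because the difference of the obstacles is the model function of $\beta^{-1}E$ and psh competitors cannot detect exceptional effective parts. I expect this step to require the most care, and I would cite the DZ identification rather than reprove it.
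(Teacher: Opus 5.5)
Your proof of $\varphi\le\varphi_\beta\le f_{\beta,\cY}$ is the paper's argument: $\varphi$ is a competitor because $A_X\ge0$, and the envelope of a continuous obstacle stays below it by the envelope property. Your independence step also ultimately rests, as the paper's does, on \cite[Lemma 9.1]{DZ26} together with a common smooth resolution. However, the retraction monotonicity you invoke first is backwards.

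For $\pi:\cY_2\to\cY_1$ the correct chain is $A_X\circ r_{\cY_1}\le A_X\circ r_{\cY_2}\le A_X$, not $\ge$. Three things force this:
\begin{itemize}
\item It follows from the formula $A_X=\sup_\cY A_X\circ r_\cY$ that you quote yourself.
\item It cannot be otherwise, since $A_X\circ r_{\cY}$ is continuous, hence bounded, on the compact space $X^{\mathrm{an}}$, whereas $A_X$ takes the value $+\infty$.
\item It is what your own later computation gives. The identity $\cM_{\beta,2}=\pi^*\cM_{\beta,1}+\beta^{-1}E$ with $E\ge0$ says $f_{\beta,\cY_2}=f_{\beta,\cY_1}+\beta^{-1}g_E$, where $g_E\ge0$ is the model function of $E$.
\end{itemize}
So comparing obstacles only gives the trivial inequality $\varphi_{\beta,\cY_1}\le\varphi_{\beta,\cY_2}$: a finer model has a larger obstacle and hence a larger envelope. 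It does not give $\varphi_{\beta,\cY_2}\le\varphi_{\beta,\cY_1}$, as you claim.

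As a result you have the easy and hard halves swapped. The real content of model independence is $\varphi_{\beta,\cY_2}\le\varphi_{\beta,\cY_1}$. That is, every psh $\eta\le f_{\beta,\cY_1}+\beta^{-1}g_E$ already satisfies $\eta\le f_{\beta,\cY_1}$, because psh functions do not detect the effective exceptional part. Your closing ``equivalently'' sentence states the opposite, trivial, implication.

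The proof can be repaired without new ideas. Your section-ring argument gives equality of the two envelopes in both directions at once, provided you cite that the envelope of a big model is determined by its relative section ring, which is in effect \cite[Lemma 9.1]{DZ26} applied to $\pi$. So delete the retraction-monotonicity step and orient the final sentence correctly; what remains is exactly the paper's proof.
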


\begin{proof}
Suppose first that $\nu:\cY'\to\cY$ is a higher smooth model. By \cite[Lemma 9.1]{DZ26}, the big models $\left(\cY,\cM+\frac1\beta K^{\log}_{\cY/X\times\mathbb P^1}\right)$ and $\left(\cY',\nu^*\cM+\frac1\beta K^{\log}_{\cY'/X\times\mathbb P^1}\right)$ induce the same non-Archimedean potential. Hence
\[
 \varphi_{\beta,\cY'}=\varphi_{\beta,\cY}.
\]
For two arbitrary smooth dominant models, we take a common smooth equivariant resolution and apply the preceding observation to its two morphisms. This proves model independence.

Since $X$ is smooth, $A_X\geq0$, and hence $\varphi\leq f_{\beta,\cY}$. As $\varphi$ is psh, it is an admissible obstacle in the envelope defining $P(f_{\beta,\cY})$, so $\varphi\leq\varphi_\beta$. By definition, the supremum of the psh obstacles lies below $f_{\beta,\cY}$. Since $f_{\beta,\cY}$ is continuous, the same remains true after upper-semicontinuous regularisation, while the envelope property ensures that the resulting function is psh. Thus
\[
 \varphi_\beta=P(f_{\beta,\cY})\leq f_{\beta,\cY};
\]
see \cite[\S4.4, Lemma 4.4]{BJ25}. Subtracting $\varphi$ proves the remaining inequalities.
\end{proof}

Notice that, although the obstacle $f_{\beta,\cY}$ and the retraction $r_\cY$ depend on $\cY$, their psh envelope does not. When no model is displayed, we will use the notation $P(\varphi+\beta^{-1}A_X)$ for this common envelope.

\subsection{Ordered intersection estimates}\label{sec: intersection estimates}

In what follows, a \emph{semipositive PL model metric} $\psi$ means a metric represented, on a normal compactified model $\rho:\cY\to X\times\mathbb P^1$, by $\cL_\psi=\rho^*p_1^*L+D_\psi$, where $D_\psi$ is vertical and $\cL_\psi$ is relatively nef.

Let $\cX_0=\sum_Eb_EE$ be an irreducible decomposition, with $v_E:=b_E^{-1}\ord_E|_{K(X)}$ the associated valuation. For every component $E$ of the central fibre $\ord_E(D)=b_EA_X(v_E)$, and, equivalently, the model function determined by $D$ has value $A_X(v_E)$ at the vertex corresponding to $E$ (cf. \cite[Remark 9.2]{DZ26}). Since $X$ is smooth, $A_X(v_E)\geq0$.

The canonical retraction fixes the trivial valuation and $A_X(v_{\mathrm{triv}})=0$, so $f_{\beta,\cX}(v_{\mathrm{triv}})=\varphi(v_{\mathrm{triv}})$. By Lemma \ref{lem:model-independence}, $\varphi\leq\varphi_\beta\leq f_{\beta,\cX}$, and hence $\varphi_\beta(v_{\mathrm{triv}})=\varphi(v_{\mathrm{triv}})$. By \cite[Proposition 4.12(ii)]{BJ22}, $\sup\varphi=\varphi(v_{\mathrm{triv}})$ and $\sup\varphi_\beta=\varphi_\beta(v_{\mathrm{triv}})$. Thus the two suprema coincide, and after one common translation we may assume $\sup\varphi=\sup\varphi_\beta=0$. We will call the metrics with $\sup\varphi=\sup\varphi_\beta=0$ \emph{normalised}.

All positive products below are taken in the translation-normalised sense of Section \ref{subsec:DR-positive-intersection}; see also \cite[(70)]{DZ26} and \cite[Definition 2.5]{DR26}. For model metrics $\eta_i$ and numerical classes $\theta_i$ on $X$, we write $\theta_{i,\cY}:=\rho^*p_1^*\theta_i$. If $D_{\eta_i}$ is the vertical divisor representing $\eta_i$ on $\cY$, then the energy pairing is
\[
 (\theta_0,\eta_0)\cdots(\theta_n,\eta_n) := (\theta_{0,\cY}+D_{\eta_0})\cdots (\theta_{n,\cY}+D_{\eta_n}),
\]
where the right-hand side is the ordinary intersection number on $\cY$; see \cite[Definition 3.12]{BJ22} and \cite[Definition 6.11]{BHJ17}.

Here $(\Theta,0)$ denotes the pair consisting of a numerical class $\Theta$ on $X$ and the trivial model function. On a common model $\rho:\cY\to X\times\mathbb P^1$, it is represented by $\rho^*p_1^*\Theta$; thus, for example,
\[
 (\Theta,0)\cdot\cA^n =(\rho^*p_1^*\Theta)\cdot\cA^n
\]
is an ordinary intersection number on $\cY$. Correspondingly, $(\Theta,0)\cdot\langle\cL_\psi^n\rangle$ denotes the mixed positive intersection obtained by taking the supremum, or equivalently the Fujita limit, over nef model approximants dominated by $\psi$.

If $F$ is vertical and $\mathcal A_2,\ldots,\mathcal A_n$ are relatively nef, then
\[
 F^2\cdot\mathcal A_2\cdots\mathcal A_n\leq0
\]
by \cite[Lemma 6.14]{BHJ17} (see also \cite[Lemma 1]{LX14}). If $F$ and $G$ are vertical and $\cA_2,\ldots,\cA_n$ are relatively nef, we define
\[
 B(F,G):=-F\cdot G\cdot\cA_2\cdots\cA_n.
\]
By the above discussion, $B(H,H)\geq0$ for every vertical divisor $H$. Applying this to
$H=F+tG$, for $t\in\mathbb R$, shows that
\[
 B(F,F)+2tB(F,G)+t^2B(G,G)\geq0
\]
for every $t$. The discriminant is therefore nonpositive, and hence
\[
 |F\cdot G\cdot\cA_2\cdots\cA_n|^2 \leq \bigl(-F^2\cdot\cA_2\cdots\cA_n\bigr) \bigl(-G^2\cdot\cA_2\cdots\cA_n\bigr);
\]
see \cite[Definition 3.28, and equation (3.17)]{BJ22}. Also \cite[\S3.3]{BJ22}, and \cite[Lemma 3.24 and Corollary 3.34]{BJ22}, which is expressed in the language of psh pairs.

\begin{lemma}\label{lem:algebraic-dirichlet-bound}
Let $\varphi\leq\eta_i\leq0$, $2\leq i\leq n:=\dim(X)$, be normalised semipositive model metrics. If $G$ is the vertical divisor representing $-\varphi$, then there exists $C_{\mathrm H}>0$ such that
\[
 -G^2\cdot\cL_{\eta_2}\cdots\cL_{\eta_n} \leq C_{\mathrm H}VJ^\NA(\varphi),
\]
where $C_{\mathrm H}$ depends only on $n$.
\end{lemma}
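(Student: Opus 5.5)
The plan is to reduce to a standard inequality for the Dirichlet-type pairing $-G^2\cdot\cA_2\cdots\cA_n$ and then replace the $\eta_i$ one by one by $\varphi$ and the trivial metric $0$, using monotonicity in each nef slot. Throughout I work on a common smooth dominant model $\cY$ on which $\varphi$ and all $\eta_i$ are represented. There $\cL_\varphi=\cL_0-G$ with $\cL_0:=\rho^*p_1^*L$, and $\cL_{\eta_i}=\cL_0+D_{\eta_i}$. The normalisation $\sup\varphi=0$ and $\varphi\leq0$ give $G\geq0$, i.e.\ $G$ is an effective vertical divisor, and $\varphi\leq\eta_i\leq 0$ gives $-G\leq D_{\eta_i}\leq0$.

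\textbf{Step 1: expansion in the $\eta_i$.} Write $\cL_{\eta_i}=t_i\cL_\varphi+(\text{rest})$; more concretely, I would use multilinearity together with the ordering to compare each slot $\cL_{\eta_i}$ with the two nef extremes $\cL_\varphi$ and $\cL_0$. The key observation is that $-G^2\cdot\cA_2\cdots\cA_n$ is monotone in each nef slot in a controlled way. Indeed, for an effective vertical $F$ and nef $\cA_j$, one has $(-G^2)\cdot F\cdot\prod\cA_j\leq$ a combination of terms of the form $G\cdot\cA_1\cdots\cA_n$, by writing $-G^2\cdot F=G\cdot(-G\cdot F)$ and arguing as follows. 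Here I would instead use the cleaner route of \cite[Lemma 3.24, Corollary 3.34]{BJ22}: the quantity $-G^2\cdot\cL_{\eta_2}\cdots\cL_{\eta_n}$ equals, up to the factor $V$, the mixed Dirichlet functional $\int d\varphi\wedge d^c\varphi\wedge dd^c\eta_2\cdots$. Those results bound such mixed functionals by $C_n$ times the $I$-functional of the extreme pair, namely $\max(I(\varphi,0),\ldots)$, with $C_n$ depending only on $n$ (the bound involves quasi-triangle constants).

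\textbf{Step 2: unmixed case.} When all $\eta_i$ are either $0$ or $\varphi$, the terms $-G^2\cdot\cL_0^a\cdot\cL_\varphi^{n-1-a}$ are exactly the summands in the standard expansion of $(n+1)V\,(I-J)$-type functionals. Expanding $\cL_\varphi^{n+1}=(\cL_0-G)^{n+1}$ and using $\cL_0^{n}\cdot G=V\sup(-\varphi)\cdot$(fibre degree)$=0$ after normalisation, one can express $E^\NA(\varphi)$ and $\int\varphi\,\mathrm{MA}(\varphi)$ through these terms. Each term is nonnegative by the Hodge index inequality \cite[Lemma 6.14]{BHJ17}. Their sum is bounded by $C\,V\,I(\varphi)\leq C'V J^\NA(\varphi)$, using $I\leq (n+1)J$.

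\textbf{Step 3: mixed case, the main obstacle.} The real difficulty is passing from the unmixed terms to arbitrary $\eta_i$ between $\varphi$ and $0$. My first attempt would be a direct slot-by-slot argument. Replace $\cL_{\eta_i}$ by $\cL_0+D_{\eta_i}$ and expand. The terms involve $-G^2\cdot D_{\eta_i}\cdots$, where $D_{\eta_i}$ is antieffective and bounded below by $-G$. To control these, I would apply the Cauchy--Schwarz inequality for $B$ displayed above, with $F=D_{\eta_i}$. This should bound $B(D_{\eta_i},D_{\eta_i})$ by $B(G,G)$-type quantities via the convexity/monotonicity argument of \cite[Lemma 3.24]{BJ22}, yielding a recursive bound with constants depending only on $n$. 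If this fails, I would fall back on citing \cite[Corollary 3.34]{BJ22} directly, which asserts exactly the required bound in the language of psh pairs. Combining Steps 2 and 3 gives the claimed inequality with $C_{\mathrm H}$ depending only on $n$.
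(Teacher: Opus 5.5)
Your argument rests on the same two ingredients as the paper: an estimate of the unmixed Dirichlet terms by $J^\NA$, followed by the H\"older-type interpolation \cite[Corollary 3.34]{BJ22} for the mixed slots. Your Step 2 is a valid variant of the paper's unmixed estimate. The terms $-G^2\cdot\cL_0^a\cdot\cL_\varphi^{n-1-a}$ are nonnegative by \cite[Lemma 6.14]{BHJ17}. They sum to $G\cdot\cL_\varphi^n-G\cdot\cL_0^n=VI_\omega(\varphi)$; note $G\cdot\cL_0^n=-V\sup\varphi=0$, not $V\sup(-\varphi)$. Hence each term is at most $VI_\omega(\varphi)\leq(n+1)VJ^\NA(\varphi)$. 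The paper instead uses the nonnegative decreasing sequence $a_j=F\cdot\cL_v^j\cdot\cL_u^{n-j}$ with $\sum_j a_j=(n+1)V(E^\NA(v)-E^\NA(u))$. That avoids quoting $I\leq(n+1)J$ and gives the same constant. Your version also handles $n=1$ directly.

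Two points need fixing. First, drop the monotonicity ``observation'' in Step 1 and the heuristic in Step 3; neither is available. Replacing a nef slot $\cA_2$ by $\cA_2+F$ changes $-G^2\cdot\cA_2\cdots\cA_n$ by $-G^2\cdot F\cdot\cA_3\cdots\cA_n$. That is a product of three vertical classes and has no sign in general. Likewise, $-G\leq D_{\eta_i}\leq0$ alone gives no comparison of $B(D_{\eta_i},D_{\eta_i})$ with $B(G,G)$ at fixed nef slots. This is exactly why \cite{BJ22} uses an iterated Cauchy--Schwarz argument, writing each non-nef vertical slot as a difference of nef classes, and why the exponent $\alpha_n=2^{1-n}$ appears. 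Since you do not carry that out, your proof rests on the citation, as the paper's does.

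Second, the citation is not literally the required bound. It gives $-G^2\cdot\cL_{\eta_2}\cdots\cL_{\eta_n}\leq C_n\,\mathfrak d(\varphi)^{\alpha_n}\max_i\mathfrak d(\zeta_i)^{1-\alpha_n}$, with $\zeta_0=0$, $\zeta_1=\varphi$, $\zeta_i=\eta_i$, where $\mathfrak d$ is the maximum of the unmixed terms. So you must also bound $\mathfrak d(\eta_i)$ for every $i$, which your Step 2 does not do. The fix is short:
\begin{itemize}
\item Since $\varphi\leq\eta_i\leq0$ and $\sup\varphi=0$, we get $\sup\eta_i=0$, so Step 2 applies verbatim to $\eta_i$ and gives $\mathfrak d(\eta_i)\leq(n+1)VJ^\NA(\eta_i)$.
\item Monotonicity of $E^\NA$ gives $J^\NA(\eta_i)=-E^\NA(\eta_i)\leq-E^\NA(\varphi)=J^\NA(\varphi)$.
\end{itemize}
With every factor bounded by $(n+1)VJ^\NA(\varphi)$, the H\"older product collapses to the linear bound $C_n(n+1)VJ^\NA(\varphi)$, exactly as in the paper.
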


\begin{proof}
Suppose first that $n=1$. In this case there are no metrics $\eta_i$, and the desired estimate is
\[
 -G^2\leq C_{\mathrm H}VJ^\NA(\varphi).
\]
Let $\cL_0:=\rho^*p_1^*L$ and $\cL_\varphi=\cL_0-G$, and put $a_0:=G\cdot\cL_\varphi$, $a_1:=G\cdot\cL_0$.
By \cite[Equation (7.15) and Theorem 7.18]{BJ22} 
\[
 \begin{aligned}
 a_0 &=G\cdot\cL_\varphi =\int_{X^{\mathrm{an}}}(-\varphi)\,\operatorname{MA}(\varphi),\\
 a_1 &=G\cdot\cL_0 =\int_{X^{\mathrm{an}}} (-\varphi)\,\operatorname{MA}(0).
 \end{aligned}
\]
Both measures are positive because $\varphi$ and $0$ are $c_1(L)$-psh, while $-\varphi\geq0$. Consequently, $a_0,a_1\geq0$. The relative Hodge index theorem gives $a_1-a_0 =G\cdot(\cL_0-\cL_\varphi)=G^2\leq0$. Furthermore,
\[
 \begin{aligned}
 a_0+a_1
 &=G\cdot(\cL_\varphi+\cL_0)\\
 &=\cL_0^2-\cL_\varphi^2\\
 &=2V\bigl(E^\NA(0)-E^\NA(\varphi)\bigr)
 =2VJ^\NA(\varphi),
 \end{aligned}
\]
where we used $\sup\varphi=0$. Hence, since $a_0,a_1\geq0$, $a_0-a_1\leq a_0\leq a_0+a_1$ and thus
\[
 -G^2=a_0-a_1\leq a_0+a_1 =2VJ^\NA(\varphi).
\]
This proves the assertion when $n=1$. We thus assume $n\geq2$.

For two semipositive model metrics $u,v$, represented on a common model, we define
\[
 \mathfrak d(u,v) :=\max_{0\leq j\leq n-1} \bigl[-(D_u-D_v)^2\cdot\cL_u^j \cdot\cL_v^{n-1-j}\bigr],
\]
and $\mathfrak d(u):=\mathfrak d(u,0)$. Suppose first that $u\leq v$ and set $F:=D_v-D_u$. 

By \cite[Equation (7.15) and Theorem 7.18]{BJ22},
\[
 a_j =F\cdot\cL_v^j\cdot\cL_u^{n-j} =\int_{X^{\mathrm{an}}} (v-u)\,\operatorname{MA}\bigl(v^{\langle j\rangle},u^{\langle n-j\rangle}\bigr).
\]
The mixed measure in this integral is positive, since $u$ and $v$ are $c_1(L)$-psh, and the integrand is nonnegative because $u\leq v$. Hence $a_j\geq0$. Furthermore,
\[
 \begin{aligned}
 a_{j+1}-a_j
 &=F\cdot\cL_v^j\cdot\cL_u^{n-1-j}
 \cdot(\cL_v-\cL_u)\\
 &=F^2\cdot\cL_v^j\cdot\cL_u^{n-1-j} \leq 0,
 \end{aligned}
\]
where the inequality follows from the relative Hodge index theorem \cite[Lemma 6.14]{BHJ17}. Hence $a_{j+1}\leq a_j$. 

Since $F=\cL_v-\cL_u$, by \cite[Lemma 3.26]{BJ22} we have
\[
 \begin{aligned}
 \sum_{j=0}^na_j &=F\cdot\sum_{j=0}^n \cL_v^j\cdot\cL_u^{n-j}\\
 &=(\cL_v-\cL_u)\cdot \sum_{j=0}^n\cL_v^j\cdot\cL_u^{n-j}\\
 &=\cL_v^{n+1}-\cL_u^{n+1}.
 \end{aligned}
\]
For $0\leq j\leq n-1$, we have $a_j-a_{j+1} = -F^2\cdot\cL_v^j\cdot\cL_u^{n-1-j}$ and since every $a_j$ is nonnegative, we obtain
\begin{equation}\label{eq: lemma 34 calc}
 \begin{aligned}
 -F^2\cdot\cL_v^j\cdot\cL_u^{n-1-j} &=a_j-a_{j+1}\\
 &\leq a_j\\
 &\leq\sum_{k=0}^na_k\\
 &=(n+1)V\bigl(E^\NA(v)-E^\NA(u)\bigr).
 \end{aligned}
 \end{equation}
Taking the maximum over $0\leq j\leq n-1$ gives
\[
 \mathfrak d(u,v) \leq(n+1)V\bigl(E^\NA(v)-E^\NA(u)\bigr).
\]

We apply this estimate first to $u=\varphi$, $v=0$, and then to $u=\eta_i$, $v=0$. The common normalisation and $\varphi\leq\eta_i\leq0$ give
\[
 \mathfrak d(\varphi) \leq(n+1)VJ^\NA(\varphi), \qquad \mathfrak d(\eta_i) \leq(n+1)VJ^\NA(\eta_i) \leq(n+1)VJ^\NA(\varphi).
\]

We now set $\alpha_n:=2^{1-n}$, $\zeta_0:=0$, $\zeta_1:=\varphi$, and $\zeta_i:=\eta_i$ for $2\leq i\leq n$. In the notation of
\cite[Definition 3.28]{BJ22},
\[
 \|\zeta_0-\zeta_1\|^2_{(\theta,\zeta_2)\cdots(\theta,\zeta_n)} =-(0,\zeta_0-\zeta_1)^2 \cdot(\theta,\zeta_2)\cdots(\theta,\zeta_n),
\]
where $\theta=c_1(L)$. In our notation, $\zeta_0-\zeta_1=-\varphi$ is represented by $G$, while $(\theta,\eta_i)$ is represented by $\cL_{\eta_i}$. Consequently,
\[
 \|\zeta_0-\zeta_1\|^2_{(\theta,\zeta_2)\cdots(\theta,\zeta_n)} =-G^2\cdot\cL_{\eta_2}\cdots\cL_{\eta_n}.
\]

Moreover, the quantity $d_\theta$ of \cite[Definition 3.29]{BJ22} agrees in our notation with $\mathfrak d$. Thus \cite[Corollary 3.34]{BJ22} gives
\[
 \begin{aligned}
 -G^2\cdot\cL_{\eta_2}\cdots\cL_{\eta_n}
 &\leq C_n\mathfrak d(0,\varphi)^{\alpha_n}
 \max_{0\leq i\leq n}\mathfrak d(\zeta_i)^{1-\alpha_n}.
 \end{aligned}
\]
The definition of $\mathfrak d$ is symmetric in its two arguments, since interchanging them merely replaces $j$ by $n-1-j$. Hence $\mathfrak d(0,\varphi)=\mathfrak d(\varphi)$. Every quantity on the right-hand side is therefore bounded by $B:=(n+1)VJ^\NA(\varphi)$.
It follows that
\[
 -G^2\cdot\cL_{\eta_2}\cdots\cL_{\eta_n}
 \leq C_nB^{\alpha_n}B^{1-\alpha_n}
 =C_n(n+1)VJ^\NA(\varphi).
\]
This proves the lemma, with $C_{\mathrm H}=C_n(n+1)$.
\end{proof}

For comparison, \cite[Lemma 2.8]{BJ23} proves a general dimension-dependent H\"older estimate for the polarisation derivative. The ordered square-root estimate below is sharper in the present model setting, and we prove it directly by intersection theory.

\begin{theorem}\label{lem:algebraic-polarisation-error}
For every $\theta\in N^1(X)_{\mathbb R}$ there is a constant
$C_\theta>0$, depending only on $(X,L)$ and $\theta$, such that
\begin{equation}\label{eq:algebraic-polarisation-error}
 |\nabla_\theta E^\NA(\psi)-\nabla_\theta E^\NA(\varphi)|
 \leq C_\theta
 \bigl(E^\NA(\psi)-E^\NA(\varphi)\bigr)^{1/2}
 J^\NA(\varphi)^{1/2}
\end{equation}
whenever $\varphi$ is a semipositive PL metric,
$\varphi\leq\psi$, and $\sup\varphi=\sup\psi=0$. Here $\psi$ may be either a semipositive PL model metric or the finite-energy psh metric associated with a rational dominant big model.
\end{theorem}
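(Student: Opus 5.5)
I would first treat the case where both $\varphi$ and $\psi$ are semipositive PL model metrics, and then pass to the big-model case by approximation. Fix a common smooth model $\cY$ on which $\cL_\varphi=\cL_0-G$ and $\cL_\psi=\cL_0-H$ are relatively nef, where $\cL_0=\rho^*p_1^*L$, and set $F:=G-H=D_\psi-D_\varphi\geq0$, so that $\cL_\psi=\cL_\varphi+F$.

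By the formula from \cite[Lemma 2.4]{BJ23} recalled in Section \ref{sec:NA-background},
\[
 V\bigl(\nabla_\theta E^\NA(\psi)-\nabla_\theta E^\NA(\varphi)\bigr)
 =\theta_\cY\cdot(\cL_\psi^n-\cL_\varphi^n)
 -\tr_\omega(\theta)V\bigl(E^\NA(\psi)-E^\NA(\varphi)\bigr).
\]
The second term is harmless. It is bounded by $|\tr_\omega\theta|(E^\NA(\psi)-E^\NA(\varphi))$, and since $\varphi\leq\psi\leq0$ we have $E^\NA(\psi)-E^\NA(\varphi)\leq J^\NA(\varphi)$, so this difference is at most $(E^\NA(\psi)-E^\NA(\varphi))^{1/2}J^\NA(\varphi)^{1/2}$. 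For the first term I telescope:
\[
 \cL_\psi^n-\cL_\varphi^n=F\cdot\sum_{j=0}^{n-1}\cL_\psi^j\cL_\varphi^{n-1-j}.
\]
This reduces the problem to bounding $\theta_\cY\cdot F\cdot\cA_2\cdots\cA_n$ for relatively nef $\cA_i\in\{\cL_\varphi,\cL_\psi\}$.

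Next, write $\theta=a L-(aL-\theta)$ with $a\gg0$ so that both $aL$ and $aL-\theta$ are ample; this choice depends only on $(X,L,\theta)$. By linearity it then suffices to treat $\theta_\cY=\cL_0$ up to a constant. The key observation is that $\cL_0$ differs from the nef class $\cL_\varphi$ by the vertical divisor $G$. Writing $\cL_0=\cL_\varphi+G$, the term $F\cdot\cL_\varphi\cdot\cA_2\cdots\cA_n$ is a nonnegative mixed Monge--Amp\`ere integral of $\psi-\varphi$, bounded by a multiple of $V(E^\NA(\psi)-E^\NA(\varphi))$, exactly as in \eqref{eq: lemma 34 calc}. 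For the remaining term $F\cdot G\cdot\cA_2\cdots\cA_n$, I apply the Cauchy--Schwarz inequality for $B(F,G)$ stated above:
\[
 |F\cdot G\cdot\cA_2\cdots\cA_n|\leq\bigl(-F^2\cdot\cA_2\cdots\cA_n\bigr)^{1/2}\bigl(-G^2\cdot\cA_2\cdots\cA_n\bigr)^{1/2}.
\]
Lemma \ref{lem:algebraic-dirichlet-bound}, with $\eta_i\in\{\varphi,\psi\}$ (admissible since $\varphi\leq\eta_i\leq0$), bounds the second factor by $C_{\mathrm H}VJ^\NA(\varphi)$. The first factor is bounded by $(n+1)V(E^\NA(\psi)-E^\NA(\varphi))$ via the $\mathfrak d(u,v)$ estimate in that proof, but only when the $\cA_i$ are of the form $\cL_\psi^j\cL_\varphi^{n-1-j}$, and that holds here. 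Combining these bounds gives the square-root estimate. The leftover linear term $E^\NA(\psi)-E^\NA(\varphi)$ is again dominated by the geometric mean, as before.

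Finally, for $\psi$ the envelope of a rational dominant big model, I would approximate $\psi$ using Fujita/Trusiani nef approximants $\psi_k\nearrow\psi$ that satisfy $\varphi\leq\psi_k$ (taking the maximum with $\varphi$ if necessary), have $\sup\psi_k=0$, and converge in energy. The positive-intersection formalism of \cite{DR26} gives $E^\NA(\psi_k)\to E^\NA(\psi)$ and $\nabla_\theta E^\NA(\psi_k)\to\nabla_\theta E^\NA(\psi)$, the latter also by the strong continuity of $\nabla_\theta E^\NA$, and the inequality then passes to the limit. I expect the main obstacle to be this approximation step, together with checking that the normalisation $\sup\psi_k=0$ and the ordering $\varphi\leq\psi_k$ can be kept simultaneously on a common model.
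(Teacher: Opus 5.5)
Your proposal is correct and follows essentially the same route as the paper. Both telescope $\cL_\psi^n-\cL_\varphi^n$ and write $\cL_0=\cL_\varphi+G$. Both bound $F\cdot\cL_\varphi\cdots$ by $a_j\le(n+1)V\bigl(E^\NA(\psi)-E^\NA(\varphi)\bigr)$ and $F\cdot G\cdots$ by Cauchy--Schwarz together with Lemma \ref{lem:algebraic-dirichlet-bound}. Both then absorb the linear terms and approximate the big model by $\max\{\varphi,\psi_m\}$. The one step you should state explicitly is the reduction of the ample class $aL-\theta$ to a multiple of $\cL_0$. This is not linearity alone: it needs the positivity bound $0\le F\cdot\Theta_{\cY}\cdot\cA_2\cdots\cA_n\le m\,F\cdot\cL_0\cdot\cA_2\cdots\cA_n$ for $\Theta$ nef with $mL-\Theta$ nef, which uses $F\ge0$ and the $\cA_i$ relatively nef. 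The paper does exactly this with $\Theta_\pm$.
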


\begin{proof}
Suppose first that both metrics are semipositive model metrics. We choose a common compactified model $\rho:\cY\to X\times\mathbb P^1$ and let $F:=D_\psi-D_\varphi$, $G:=\cL_0-\cL_\varphi$ and $\cL_0:=\rho^*p_1^*L$. Thus $F$ represents the nonnegative model function $\psi-\varphi$, while $G$ represents $-\varphi$.

Since $\sup\varphi=\sup\psi=0$, we have
\[
 E^\NA(\psi)-E^\NA(\varphi) =J^\NA(\varphi)-J^\NA(\psi).
\]
Since $\varphi\leq\psi$, monotonicity of $E^\NA$ gives $E^\NA(\psi)-E^\NA(\varphi)\geq0$. Furthermore, the common normalisation implies $J^\NA(\varphi)=-E^\NA(\varphi)$ and $J^\NA(\psi)=-E^\NA(\psi)$. Since $J^\NA(\psi)\geq0$, we obtain
\[
 E^\NA(\psi)-E^\NA(\varphi) =J^\NA(\varphi)-J^\NA(\psi) \leq J^\NA(\varphi).
\]

For every $0\leq j\leq n-1$, \eqref{eq: lemma 34 calc} gives
\[
 -F^2\cdot\cL_\psi^j\cdot\cL_\varphi^{n-1-j} \leq (n+1)V\bigl(E^\NA(\psi)-E^\NA(\varphi)\bigr).
\]
Since every factor $\cL_\psi$ or $\cL_\varphi$ lies between $\cL_\varphi$ and $\cL_0$, Lemma \ref{lem:algebraic-dirichlet-bound} also gives
\[
 -G^2\cdot\cL_\psi^j\cdot\cL_\varphi^{n-1-j}
 \leq C_{\mathrm H}VJ^\NA(\varphi).
\]
Hence, using the Cauchy-Schwarz inequality in the preceding discussion, we have
\[
\left| F\cdot G\cdot \cL_\psi^j\cdot\cL_\varphi^{n-1-j}
 \right| \leq CV \bigl(E^\NA(\psi)-E^\NA(\varphi)\bigr)^{1/2} J^\NA(\varphi)^{1/2},
\]
where $C$ depends only on $n$.

We next estimate the intersections with $\theta$. We choose ample real classes $\Theta_+$ and $\Theta_-$ such that $\theta=\Theta_+-\Theta_-$ and choose $m>0$ sufficiently large such that $mL-\Theta_+$ and $mL-\Theta_-$ are nef. We denote their horizontal pullbacks to $\cY$ by $\Theta_{+,\cY}$ and $\Theta_{-,\cY}$.

Let $\operatorname{MA}_{\Theta_\pm}\bigl(\psi^{\langle j\rangle},\varphi^{\langle n-1-j\rangle}\bigr)$ denote the positive mixed model Monge--Amp\`ere measure determined by $\Theta_\pm$, $j$ copies of $\psi$, and $n-1-j$ copies of $\varphi$. By \cite[Equation (7.15) and Theorem 7.18]{BJ22}, the intersection--integration identity gives
\[
 F\cdot\Theta_{\pm,\cY}\cdot \cL_\psi^j\cdot\cL_\varphi^{n-1-j}
 =\int_{X^{\mathrm{an}}}(\psi-\varphi)\,\operatorname{MA}_{\Theta_\pm}\bigl(\psi^{\langle j\rangle},\varphi^{\langle n-1-j\rangle}\bigr)\geq0,
\]
and, since $mL-\Theta_\pm$ is nef,
\[
 F\cdot \bigl(m\cL_0-\Theta_{\pm,\cY}\bigr)\cdot \cL_\psi^j\cdot\cL_\varphi^{n-1-j} \geq0.
\]
Hence
\[
 0\leq F\cdot\Theta_{\pm,\cY}\cdot \cL_\psi^j\cdot\cL_\varphi^{n-1-j} \leq mF\cdot\cL_0\cdot \cL_\psi^j\cdot\cL_\varphi^{n-1-j}.
\]

Since $\cL_0=\cL_\varphi+G$,
\[
F\cdot\cL_0\cdot \cL_\psi^j\cdot\cL_\varphi^{n-1-j}= F\cdot\cL_\psi^j\cdot\cL_\varphi^{n-j} + F\cdot G\cdot \cL_\psi^j\cdot\cL_\varphi^{n-1-j}.
\]
The first term equals $a_j$ in the notation of the proof of Lemma \ref{lem:algebraic-dirichlet-bound}, thus $a_j\geq 0$ and we thus have
\[
 0\leq a_j = F\cdot\cL_\psi^j\cdot\cL_\varphi^{n-j}
 \leq \sum_{k=0}^na_k= \sum_{k=0}^n F\cdot\cL_\psi^k\cdot\cL_\varphi^{n-k} =(n+1)V \bigl(E^\NA(\psi)-E^\NA(\varphi)\bigr).
\]
Since $\theta_{\cY}=\Theta_{+,\cY}-\Theta_{-,\cY}$ and both intersections with $\Theta_{+,\cY}$ and $\Theta_{-,\cY}$ are nonnegative, the preceding estimates give
\[
 \begin{aligned}
 \left|
 F\cdot\theta_{\cY}\cdot
 \cL_\psi^j\cdot\cL_\varphi^{n-1-j}
 \right| &\leq
 F\cdot\Theta_{+,\cY}\cdot
 \cL_\psi^j\cdot\cL_\varphi^{n-1-j} +F\cdot\Theta_{-,\cY}\cdot
 \cL_\psi^j\cdot\cL_\varphi^{n-1-j}\\
 &\leq 2mF\cdot\cL_0\cdot
 \cL_\psi^j\cdot\cL_\varphi^{n-1-j}\\
 &\leq 2m(n+1)V\bigl(E^\NA(\psi)-E^\NA(\varphi)\bigr) +2mCV \bigl(E^\NA(\psi)-E^\NA(\varphi)\bigr)^{1/2}
 J^\NA(\varphi)^{1/2}.
 \end{aligned}
\]

We next absorb the first term into the second. Since $E^\NA(\psi)-E^\NA(\varphi) =J^\NA(\varphi)-J^\NA(\psi) \leq J^\NA(\varphi)$, and by the monotonicity of $f(t)=t^{1/2}$ we obtain
\[
 \left|F\cdot\theta_{\cY}\cdot \cL_\psi^j\cdot\cL_\varphi^{n-1-j} \right| \leq C_\theta V\bigl(E^\NA(\psi)-E^\NA(\varphi)\bigr)^{1/2} J^\NA(\varphi)^{1/2}.
\]

Since
\[
 \theta_{\cY}\cdot \bigl(\cL_\psi^n-\cL_\varphi^n\bigr) = \sum_{j=0}^{n-1} F\cdot\theta_{\cY}\cdot \cL_\psi^j\cdot\cL_\varphi^{n-1-j},
\]
we obtain
\[
 \left| \theta_{\cY}\cdot \bigl(\cL_\psi^n-\cL_\varphi^n\bigr) \right| \leq C_\theta V \bigl(E^\NA(\psi)-E^\NA(\varphi)\bigr)^{1/2}
 J^\NA(\varphi)^{1/2}.
\]
Now, by \cite[Definition 2.1 and Lemma 2.4]{BJ23}, we obtain
\[
\begin{aligned}
 |\nabla_\theta E^\NA(\psi)
 -\nabla_\theta E^\NA(\varphi)|&\leq |\operatorname{tr}_L(\theta)| \bigl(E^\NA(\psi)-E^\NA(\varphi)\bigr) + \frac1V \left| \theta_{\cY}\cdot \bigl(\cL_\psi^n-\cL_\varphi^n\bigr) \right|\\
 &\leq C_\theta \bigl(E^\NA(\psi)-E^\NA(\varphi)\bigr)^{1/2} J^\NA(\varphi)^{1/2}.
\end{aligned}
\]
This gives the result when both metrics are semipositive normalised metrics.

We now let $\psi$ be the finite-energy psh metric associated with a rational dominant big model. After one common central twist,
Fujita approximation (see for instance \cite[(2.7)]{DR26}) gives semipositive PL metrics $\psi_m\leq\psi$, with nef model classes $\cA_m:=\cL_{\psi_m}$ such that $\cA_m^{n+1}\longrightarrow \langle\cL_\psi^{n+1}\rangle$ and, for each sign,
\[
 \Theta_{\pm,\cY_m}\cdot\cA_m^n \longrightarrow \Theta_\pm\cdot\langle\cL_\psi^n\rangle.
\]

We now set $\widetilde\psi_m:=\max\{\varphi,\psi_m\}$ and $\widetilde\cA_m:=\cL_{\widetilde\psi_m}$. By \cite[Proposition 3.6(iii)]{BJ22}, $\widetilde\psi_m$ is a semipositive PL metric. Furthermore, $\varphi\leq\widetilde\psi_m\leq\psi\leq0$ so $\sup\widetilde\psi_m=0$, and $\widetilde\cA_m$ is still an admissible nef approximant from below.

Let $H_m:=D_{\widetilde\psi_m}-D_{\psi_m}$. The model function represented by $H_m$ is $\widetilde\psi_m-\psi_m$, hence is nonnegative. Therefore
\[
 \widetilde\cA_m^{n+1}-\cA_m^{n+1} = \sum_{j=0}^n H_m\cdot\widetilde\cA_m^j\cdot\cA_m^{n-j} \geq0,
\]
because every summand is the integral of this nonnegative model function against a positive mixed Monge--Amp\`ere measure. Similarly,
\[
\Theta_{\pm,\cY_m}\cdot \bigl(\widetilde\cA_m^n-\cA_m^n\bigr) = \sum_{j=0}^{n-1}H_m\cdot\Theta_{\pm,\cY_m}\cdot \widetilde\cA_m^j\cdot\cA_m^{n-1-j} \geq0.
\]

Since $\widetilde\cA_m$ is an admissible nef model class dominated by the big model defining $\psi$, the definition of the positive intersection product \cite[\S2.1.2]{DR26} gives $\widetilde\cA_m^{n+1} \leq\langle\cL_\psi^{n+1}\rangle$ and, in the pseudoeffective order, $\widetilde\cA_m^n \leq\langle\cL_\psi^n\rangle$. Since $\Theta_{\pm,\cY_m}$ are both nef, the latter inequality implies $\Theta_{\pm,\cY_m}\cdot\widetilde\cA_m^n \leq \Theta_\pm\cdot\langle\cL_\psi^n\rangle$. Together with the preceding lower bounds, we therefore have $\cA_m^{n+1} \leq\widetilde\cA_m^{n+1} \leq\langle\cL_\psi^{n+1}\rangle$ and $\Theta_{\pm,\cY_m}\cdot\cA_m^n \leq \Theta_{\pm,\cY_m}\cdot\widetilde\cA_m^n \leq \Theta_\pm\cdot\langle\cL_\psi^n\rangle$. The squeeze theorem hence gives $\widetilde\cA_m^{n+1} \longrightarrow \langle\cL_\psi^{n+1}\rangle$ and $\Theta_{\pm,\cY_m}\cdot\widetilde\cA_m^n \longrightarrow \Theta_\pm\cdot\langle\cL_\psi^n\rangle$. Since $\theta=\Theta_+-\Theta_-$, subtracting the two limits yields
\[
 \theta_{\cY_m}\cdot\widetilde\cA_m^n \longrightarrow \theta\cdot\langle\cL_\psi^n\rangle.
\]
The positive-intersection formula for the energy and \cite[Definition 2.1 and Lemma 2.4]{BJ23} now imply $E^\NA(\widetilde\psi_m)\longrightarrow E^\NA(\psi)$ and $\nabla_\theta E^\NA(\widetilde\psi_m)
 \longrightarrow\nabla_\theta E^\NA(\psi)$.

Applying the already proved model case to $(\varphi,\widetilde\psi_m)$ gives
\[
|\nabla_\theta E^\NA(\widetilde\psi_m)
 -\nabla_\theta E^\NA(\varphi)|\leq C_\theta \bigl(E^\NA(\widetilde\psi_m)-E^\NA(\varphi)\bigr)^{1/2}
 J^\NA(\varphi)^{1/2}.
\]
Hence, passing to the limit proves the assertion for $\psi$.
\end{proof}

\begin{corollary}\label{cor:algebraic-polarisation-linear}
For every $\theta\in N^1(X)_{\mathbb R}$ there is a constant $C_\theta'>0$ such that every semipositive PL model metric $\varphi$, normalised by $\sup\varphi=0$, satisfies
\begin{equation}\label{eq:algebraic-polarisation-linear}
 |\nabla_\theta E^\NA(\varphi)| \leq C_\theta'J^\NA(\varphi).
\end{equation}
\end{corollary}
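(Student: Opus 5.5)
The plan is to derive the corollary from Theorem~\ref{lem:algebraic-polarisation-error} by comparing $\varphi$ with the trivial metric $\psi:=0$. The trivial metric is a semipositive PL model metric: it is represented on $X\times\mathbb P^1$ by $\cL_0=p_1^*L$ with $D_0=0$. It satisfies $\varphi\leq 0=\psi$ because $\sup\varphi=0$, and of course $\sup\psi=0$. So the hypotheses of the theorem hold with $\psi=0$.

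The first step is to evaluate the terms at the trivial metric. We have $E^\NA(0)=0$. By the formula from \cite[Lemma 2.4 and (2.6)]{BJ23} recalled in Section~\ref{sec:NA-background},
\[
\nabla_\theta E^\NA(0)=\tfrac1V\bigl(\theta_{X\times\mathbb P^1}\cdot(p_1^*L)^n\bigr)-\tr_\omega(\theta)\cdot 0 .
\]
The intersection $\theta_{X\times\mathbb P^1}\cdot(p_1^*L)^n$ vanishes, since it is a pullback of $n+1$ classes from the $n$-dimensional $X$. Hence $\nabla_\theta E^\NA(0)=0$. Alternatively, $\nabla_\theta E^\NA_{\omega+t\eta}(0)$ is identically zero in $t$, so its derivative vanishes.

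The second step is to substitute these values into \eqref{eq:algebraic-polarisation-error}:
\[
|\nabla_\theta E^\NA(\varphi)|\leq C_\theta\bigl(-E^\NA(\varphi)\bigr)^{1/2}J^\NA(\varphi)^{1/2}.
\]
Because $\sup\varphi=0$, we have $-E^\NA(\varphi)=J^\NA(\varphi)$. The right-hand side is therefore $C_\theta J^\NA(\varphi)$, and we may take $C'_\theta:=C_\theta$.

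The only point needing care is whether the trivial metric is an admissible $\psi$ in Theorem~\ref{lem:algebraic-polarisation-error}, and whether $\nabla_\theta E^\NA(0)=0$ holds with the trace-term convention used there. Both are immediate from the definitions. If one prefers not to rely on the convention, one can instead argue directly, as in the first part of the theorem's proof. Take $F:=D_0-D_\varphi=G$. The bound on $|F\cdot\theta_\cY\cdot\cL_0^j\cdot\cL_\varphi^{n-1-j}|$ obtained there is at most $C_\theta V J^\NA(\varphi)$. Summing over $j$ and adding $|\tr_L(\theta)|J^\NA(\varphi)$ then gives the same linear bound.
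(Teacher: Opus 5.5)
Your proposal is correct and matches the paper's proof: apply Theorem \ref{lem:algebraic-polarisation-error} with $\psi=0$, and use $\nabla_\theta E^\NA(0)=0$ and $E^\NA(0)-E^\NA(\varphi)=J^\NA(\varphi)$ to turn the square-root product into $J^\NA(\varphi)$. You also verify two points the paper leaves implicit: that $\psi=0$ satisfies the theorem's hypotheses, and that $\nabla_\theta E^\NA(0)=0$ because $\theta_{X\times\mathbb P^1}\cdot(p_1^*L)^n=0$.
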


\begin{proof}
We apply Theorem \ref{lem:algebraic-polarisation-error} with $\psi=0$. Since $\nabla_\theta E^\NA(0)=0$ and $E^\NA(0)-E^\NA(\varphi)=J^\NA(\varphi)$, the result follows.
\end{proof}

\subsection{Comparison of Mabuchi functionals}\label{sec: comparison of stability}\label{sec: mabuchi comparison}

We retain the notation of the preceding subsections. Thus $\mathcal T$ is a normal ample test configuration with associated semipositive PL metric $\varphi=\varphi_{\mathcal T}$, and $\varphi_\beta$ is its log-discrepancy metric.

\begin{lemma}\label{lem:distance-entropy}
We have
\begin{equation}\label{eq:distance-entropy}
 d_1^\NA(\varphi,\varphi_\beta)
 =E^\NA(\varphi_\beta)-E^\NA(\varphi)
 =\frac1\beta\Ent_{\mathcal T}^\beta.
\end{equation}
\end{lemma}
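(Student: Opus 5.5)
The identity splits into two equalities, and I will prove them separately.

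\emph{First equality.} By Lemma \ref{lem:model-independence}, $\varphi\leq\varphi_\beta$ and both lie in $\mathcal E^{1,\NA}(L)$. Then $\varphi_\beta$ is the envelope of a continuous obstacle, and the envelope property holds since $X$ is smooth. Because $\varphi\leq\varphi_\beta$, the rooftop envelope satisfies $P(\varphi\wedge\varphi_\beta)=\varphi$. Substituting into \eqref{eq:d1-rooftop-background} gives
\[
d_1^\NA(\varphi,\varphi_\beta)=E^\NA(\varphi)+E^\NA(\varphi_\beta)-2E^\NA(\varphi)=E^\NA(\varphi_\beta)-E^\NA(\varphi).
\]
This is also the displayed consequence of the rooftop formula in the background section.

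\emph{Second equality.} Here I identify both energies intersection-theoretically on a smooth dominant model $(\cY,\cM)$ from Lemma \ref{lem:smooth-domination}. Since $\cM=\mu^*\cL$ is semiample, we have $E^\NA(\varphi)=\cM^{n+1}/((n+1)V)$. By \cite[Lemma 9.1, Remark 9.2]{DZ26}, $\varphi_\beta$ is the psh metric attached to the big model $\mathcal T_\beta=(\cY,\cM_\beta)$. The key input is the positive-intersection formula for the energy of a big model,
\[
E^\NA(\varphi_\beta)=\frac{\langle\cM_\beta^{n+1}\rangle}{(n+1)V},
\]
which is \cite[\S2.1.2--2.1.4, (2.7)]{DR26} and \cite[(70)]{DZ26}. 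It was already used at the end of the proof of Theorem \ref{lem:algebraic-polarisation-error}. Subtracting the two formulas gives
\[
E^\NA(\varphi_\beta)-E^\NA(\varphi)=\frac{\langle\cM_\beta^{n+1}\rangle-\cM^{n+1}}{(n+1)V}=\frac1\beta\Ent^\beta_{\mathcal T}.
\]

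\emph{Normalisation.} I will check that both energy formulas use the same normalisation. The positive product is taken in the translation-normalised sense, and $\sup\varphi=\sup\varphi_\beta$ as shown before Lemma \ref{lem:algebraic-dirichlet-bound}. Any common translation shifts both energies by the same constant, so the difference is unaffected.

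\emph{Main obstacle.} The delicate step is justifying $E^\NA(P(\varphi_{\cM_\beta}))=\langle\cM_\beta^{n+1}\rangle/((n+1)V)$ for the envelope of a merely big model, rather than for an ample one. I would argue this via Fujita approximation. There are nef approximants $\cA_m$ with $\psi_m\leq\varphi_\beta$ and $\cA_m^{n+1}\to\langle\cM_\beta^{n+1}\rangle$. Replacing $\psi_m$ by $\max\{\varphi,\psi_m\}$, as in the proof of Theorem \ref{lem:algebraic-polarisation-error}, gives $\psi_m\nearrow\varphi_\beta$ in energy. Continuity of $E^\NA$ along increasing sequences converging to the envelope, which uses the envelope property and \cite[Theorem 9.5]{DZ26}, then concludes.
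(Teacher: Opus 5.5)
Your proof is correct and follows the paper's route. The first equality comes from the rooftop formula \cite[Theorem 5.5]{BJ25} using $\varphi\leq\varphi_\beta$. The second comes from identifying $E^\NA(\varphi_\beta)$ with $\langle\cM_\beta^{n+1}\rangle/((n+1)V)$, which the paper takes directly from \cite[Proposition 9.4]{DZ26} in the packaged form $\Ent^\beta_{\mathcal T}=\beta\bigl(E^\NA(\varphi_\beta)-E^\NA(\varphi)\bigr)$.

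Your closing Fujita-approximation sketch is not needed, and as written it assumes what it sets out to prove: that the approximants converge to $\varphi_\beta$ in energy is exactly the envelope--volume identity. It is better to rest that step on the citation alone.
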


\begin{proof}
Since $X$ is smooth, it has the envelope property. Since $\varphi\leq\varphi_\beta$, \cite[Theorem 5.5(i),(iii)]{BJ25} gives
\[
 d_1^\NA(\varphi,\varphi_\beta) =E^\NA(\varphi_\beta)-E^\NA(\varphi).
\]
Together with \cite[Proposition 9.4]{DZ26}, we obtain 
\[
 \Ent_{\mathcal T}^\beta =\beta\bigl(E^\NA(\varphi_\beta)-E^\NA(\varphi)\bigr),
\]
which proves the assertion.
\end{proof}

Also, recall the following result.

\begin{lemma}[{\cite[Corollary 9.6]{DZ26}}]\label{lem:entropy-bridge}
For every normal ample test configuration $\mathcal T$ and every rational $\beta>1$, its log-discrepancy metric satisfies $\Ent_{\mathcal T}^\beta \geq\Ent^\NA(\varphi_\beta)$.
\end{lemma}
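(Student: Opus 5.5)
The plan is to reduce the inequality to concavity of the energy together with an orthogonality statement for the envelope $\varphi_\beta=P(f_{\beta,\cY})$.

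By Lemma \ref{lem:distance-entropy} we have $\Ent^\beta_{\mathcal T}=\beta\bigl(E^\NA(\varphi_\beta)-E^\NA(\varphi)\bigr)$. So it suffices to prove
\[
 E^\NA(\varphi_\beta)-E^\NA(\varphi)\geq\frac1\beta\int_{X^{\an}}A_X\,\operatorname{MA}(\varphi_\beta).
\]

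\emph{Step 1 (concavity).} Concavity of $E^\NA$ on $\mathcal E^{1,\NA}(L)$, with $\operatorname{MA}$ as its differential (\cite[(4.5)]{BJ25}), gives
\[
 E^\NA(\varphi)\leq E^\NA(\varphi_\beta)+\int(\varphi-\varphi_\beta)\,\operatorname{MA}(\varphi_\beta).
\]
Hence
\[
 E^\NA(\varphi_\beta)-E^\NA(\varphi)\geq\int(\varphi_\beta-\varphi)\,\operatorname{MA}(\varphi_\beta).
\]

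\emph{Step 2 (orthogonality).} The obstacle $f_{\beta,\cY}$ is continuous, and $X$ has the envelope property. The non-Archimedean orthogonality property \cite[\S4.6]{BJ26} (see also \cite{BJ25}) then says that $\operatorname{MA}(P(f))$ is supported on the contact locus $\{P(f)=f\}$. On that locus, $\varphi_\beta-\varphi=\beta^{-1}A_X\circ r_\cY$, so
\[
 \int(\varphi_\beta-\varphi)\,\operatorname{MA}(\varphi_\beta)=\frac1\beta\int A_X\circ r_\cY\,\operatorname{MA}(\varphi_\beta).
\]

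\emph{Step 3 (removing the retraction).} This step is the main obstacle, because the naive inequality goes the wrong way: $A_X\geq A_X\circ r_\cY$ in general \cite[Appendix A]{BJ23}. The way around it is to show that $\operatorname{MA}(\varphi_\beta)$ is supported on the dual complex $\Delta_\cY$. On $\Delta_\cY$ the retraction is the identity, so $A_X\circ r_\cY=A_X$ there; this also uses that $A_X$ is affine on the faces of $\Delta_\cY$, since $(\cY,(\cY_0)_{\mathrm{red}})$ is log smooth.

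The support claim itself I would get as follows. The obstacle $f_{\beta,\cY}$ factors through $r_\cY$. For an envelope of such an obstacle, $P(f)\circ r_\cY\geq P(f)$ is psh (retraction monotonicity for psh functions) and still lies below $f$. Hence $P(f)=P(f)\circ r_\cY$, i.e.\ $\varphi_\beta$ is invariant under the retraction. The Monge--Amp\`ere measure of a psh function invariant under $r_\cY$ is supported on $\Delta_\cY$; I would check this through the description of $\varphi_\beta$ as the metric of the big model $\mathcal T_\beta$ and the positive-intersection (Fujita approximation) formula for its measure \cite[Lemma 9.1, Remark 9.2]{DZ26}.

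Combining the three steps gives
\[
 \Ent^\beta_{\mathcal T}\geq\int A_X\,\operatorname{MA}(\varphi_\beta)=\Ent^\NA(\varphi_\beta).
\]
In fact this argument yields equality up to the concavity gap in Step 1.
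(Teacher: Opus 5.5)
Your Steps 1 and 2 are correct, and the overall route (concavity of $E^\NA$ plus orthogonality, combined with Lemma~\ref{lem:distance-entropy}) is a genuinely independent argument. The paper gives no proof and simply quotes \cite[Corollary 9.6]{DZ26}.

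\textbf{The gap.} It is in Step 3. Retraction monotonicity only says $\psi\le\psi\circ r_\cY$ for psh $\psi$. It does not say that $\psi\circ r_\cY$ is psh, and in general it is not.

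Take a prime divisor $Z\subset X$ and let $\cY$ be the blow-up of $X\times\mathbb A^1$ along $Z\times\{0\}$, with exceptional divisor $\mathcal E$. Then $\Delta_\cY=\{t\,\ord_Z:0\le t\le 1\}$ and $r_\cY(v)=\min\{v(Z),1\}\,\ord_Z$. Let
\[
\psi=\tfrac1m\max\{\log|\sigma|,\ \max_k\log|\sigma_k|-c\},
\]
with $\ord_Z\sigma=mb$, $(\sigma_k)$ a basis of $H^0(mL)$, and $c>b$. Then $\psi\circ r_\cY=-b\min\{v(Z),1\}$ is the model function of $-b\mathcal E$. This is psh only if $L-bZ$ is nef, since $\rho^*L-b\mathcal E$ restricts to $L-bZ$ on the strict transform of $X\times\{0\}$.

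For a concrete failure, take $X=\mathrm{Bl}_{q_1,q_2}\mathbb P^2$, $L=3H-G_1-G_2$, $Z$ the strict transform of the line through $q_1,q_2$, and rational $b\in(1,3)$. Then $L-bZ$ is effective but $(L-bZ)\cdot G_1=1-b<0$.

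In the same example, the envelope of the $r_\cY$-invariant obstacle $-b\min\{v(Z),1\}$ equals $-bs$ at $s\,\ord_Z$. At $s\,\ord_{q'}$, with $q'=Z\cap G_1$ and $0<s<1/2$, it is at most $-(2b-1)s$, because every competing section vanishing to order $mb$ along $Z$ is forced to vanish along $G_1$. Both points have the same retraction. So the invariance $\varphi_\beta=\varphi_\beta\circ r_\cY$ does not follow from your argument, and the support claim for $\operatorname{MA}(\varphi_\beta)$ is left unproved.

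\textbf{A repair.} You can avoid any support theorem by using model independence. By Lemma~\ref{lem:model-independence}, $\varphi_\beta=P(f_{\beta,\cY'})$ for every smooth dominant model $\cY'$ of $\mathcal T$. So your Steps 1 and 2, run with $\cY'$ in place of $\cY$, give
\[
\Ent^\beta_{\mathcal T}-\beta\delta=\int_{X^{\an}}A_X\circ r_{\cY'}\,\operatorname{MA}(\varphi_\beta),
\qquad
\delta:=E^\NA(\varphi_\beta)-E^\NA(\varphi)-\int_{X^{\an}}(\varphi_\beta-\varphi)\,\operatorname{MA}(\varphi_\beta)\geq 0,
\]
and the left-hand side does not depend on $\cY'$. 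These models are cofinal among snc models. The functions $A_X\circ r_{\cY'}$ form an increasing net of continuous functions with supremum $A_X$ \cite[Definition A.5 and Theorem A.10]{BJ23}. Monotone convergence for the Radon measure $\operatorname{MA}(\varphi_\beta)$ then yields $\Ent^\beta_{\mathcal T}-\beta\delta=\Ent^\NA(\varphi_\beta)$. This is the lemma, and it also gives your closing equality statement.

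Alternatively, $\operatorname{MA}(\varphi_\beta)$ is indeed carried by the vertices of $\Delta_\cY$, but the reason is differentiability of volumes, not retraction invariance. Adding an effective $\cY$-exceptional vertical divisor $F$ to $\cM_\beta$ does not change its volume, so $\int\varphi_F\,\operatorname{MA}(\varphi_\beta)=0$ for the nonnegative model function $\varphi_F$.
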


We first record the approximation step that passes an inequality from ample test configurations to their associated big models appearing above.

\begin{proposition}\label{prop:input}
Fix a normal ample test configuration $\mathcal T$ and a rational number $\beta>1$. Let $\varphi_\beta$ be the corresponding log-discrepancy metric. If there exists $\Lambda\in\mathbb R$ such that
\[
 \mathrm K^\NA(\psi)\geq\Lambda\|\psi\|_1
\]
for every normal ample test-configuration metric $\psi$, then
\[
 \mathrm K^\NA(\varphi_\beta)
 \geq\Lambda\|\varphi_\beta\|_1.
\]
\end{proposition}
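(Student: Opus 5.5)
The proof passes the inequality from ample test-configuration metrics to all of $\mathcal E^{1,\NA}(L)$, and then specialises to $\varphi_\beta$. The natural tool is Trusiani's energy regularisation, the solution of the Boucksom--Jonsson conjecture cited as \cite{Tru26}. It says that for every $\eta\in\mathcal E^{1,\NA}(L)$ with $\mathrm K^\NA(\eta)<\infty$ there is a sequence $\eta_k\in\mathcal H^\NA_{\mathbb Q}(L)$ with
\[
 d_1^\NA(\eta_k,\eta)\to0
 \quad\text{and}\quad
 \mathrm K^\NA(\eta_k)\to\mathrm K^\NA(\eta).
\]
Each rational Fubini--Study metric $\eta_k$ is the metric of a normal ample test configuration, so the hypothesis gives $\mathrm K^\NA(\eta_k)\geq\Lambda\|\eta_k\|_1$.

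To pass to the limit, I use Lemma~\ref{lem:distance-to-fixed-subset}: $\bigl|\|\eta_k\|_1-\|\eta\|_1\bigr|\leq d_1^\NA(\eta_k,\eta)\to0$. The right-hand side $\Lambda\|\eta_k\|_1$ therefore converges to $\Lambda\|\eta\|_1$ whatever the sign of $\Lambda$. Together with the convergence of the left-hand side, this gives $\mathrm K^\NA(\eta)\geq\Lambda\|\eta\|_1$.

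This needs $\mathrm K^\NA(\varphi_\beta)<\infty$, which I check directly. By Lemma~\ref{lem:entropy-bridge}, $\Ent^\NA(\varphi_\beta)\leq\Ent^\beta_{\mathcal T}$. The latter is finite, since by Lemma~\ref{lem:distance-entropy} it equals $\beta\bigl(E^\NA(\varphi_\beta)-E^\NA(\varphi)\bigr)$ with $\varphi_\beta\in\mathcal E^{1,\NA}$. The polarisation term $\nabla_{K_X}E^\NA$ is finite-valued, so $\mathrm K^\NA(\varphi_\beta)<\infty$. If for some reason the regularisation theorem were only available in a different form, the case $\mathrm K^\NA=+\infty$ would be trivial anyway, so finiteness is needed only to invoke the approximation.

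The main obstacle is making sure the cited regularisation gives convergence of the full functional $\mathrm K^\NA$ and not merely of the entropy. The entropy is only lower semicontinuous, so the direction that matters is $\limsup_k\Ent^\NA(\eta_k)\leq\Ent^\NA(\varphi_\beta)$. The polarisation term $\nabla_{K_X}E^\NA$ is $d_1^\NA$-continuous (Theorem~\ref{lem:algebraic-polarisation-error}, or \cite[Lemma 2.8]{BJ23}), so it converges along any $d_1^\NA$-convergent sequence, and only the entropy convergence has to come from \cite{Tru26}. If the statement in \cite{Tru26} is phrased in terms of entropy approximation, I would combine it with this continuity to obtain convergence of $\mathrm K^\NA$.
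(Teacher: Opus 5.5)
Your proposal is correct and follows essentially the paper's route. Trusiani's regularisation (the paper invokes \cite[Corollary A and its proof]{Tru26} together with \cite[Lemma 4.5]{Li21}) supplies ample test-configuration metrics converging to $\varphi_\beta$ in $d_1^\NA$ with convergent entropy, strong continuity of $\nabla_{K_X}E^\NA$ upgrades this to convergence of $\mathrm K^\NA$, and Lemma~\ref{lem:distance-to-fixed-subset} passes the norms to the limit. One small correction: Theorem~\ref{lem:algebraic-polarisation-error} alone does not give $d_1^\NA$-continuity of $\nabla_{K_X}E^\NA$, since it requires an ordered, normalised pair whose smaller member is PL. That step should rest on \cite[Lemma 2.8]{BJ23}, i.e.\ the strong continuity recalled in the preliminaries.
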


\begin{proof}
By \cite[Corollary A and its proof]{Tru26}, together with \cite[Lemma 4.5]{Li21}, there exist smooth ample test-configuration metrics $\psi_m$ such that $d_1^\NA(\psi_m,\varphi_\beta)\longrightarrow0$ and $\Ent^\NA(\psi_m)\longrightarrow\Ent^\NA(\varphi_\beta)$. The strong continuity of $\nabla_{K_X}E^\NA$ therefore gives $\mathrm K^\NA(\psi_m)\longrightarrow\mathrm K^\NA(\varphi_\beta)$, while Lemma \ref{lem:distance-to-fixed-subset} gives
\[
 \bigl|\|\psi_m\|_1-\|\varphi_\beta\|_1\bigr|
 \leq d_1^\NA(\psi_m,\varphi_\beta)\longrightarrow0.
\]
Passing to the limit in
$\mathrm K^\NA(\psi_m)\geq\Lambda\|\psi_m\|_1$
proves the assertion.
\end{proof}

\begin{theorem}\label{thm:mabuchi-comparison}
Let $\mathcal T$ be a normal ample test configuration with associated metric $\varphi$, and let $\varphi_\beta$ be its log-discrepancy metric. There is a constant $C>0$, depending only on $(X,L)$, such that
\[
 \mathrm K^\beta(\varphi_{\mathcal T})
 \geq\mathrm K^\NA(\varphi_\beta)
 -C\,d_1^\NA(\varphi,\varphi_\beta)^{1/2}J^\NA(\varphi)^{1/2}.
\]
\end{theorem}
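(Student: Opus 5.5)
The plan is to decompose both Mabuchi functionals into their entropy and polarisation-derivative parts and compare them term by term. By definition,
\[
 \mathrm K^\beta(\varphi_{\mathcal T})=\Ent^\beta_{\mathcal T}+\nabla_{K_X}E^\NA(\varphi),\qquad
 \mathrm K^\NA(\varphi_\beta)=\Ent^\NA(\varphi_\beta)+\nabla_{K_X}E^\NA(\varphi_\beta).
\]
Subtracting gives
\[
 \mathrm K^\beta(\varphi_{\mathcal T})-\mathrm K^\NA(\varphi_\beta)
 =\bigl(\Ent^\beta_{\mathcal T}-\Ent^\NA(\varphi_\beta)\bigr)
 -\bigl(\nabla_{K_X}E^\NA(\varphi_\beta)-\nabla_{K_X}E^\NA(\varphi)\bigr).
\]
By Lemma \ref{lem:entropy-bridge}, the first bracket is nonnegative. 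It therefore suffices to bound the second bracket from above by $C\,d_1^\NA(\varphi,\varphi_\beta)^{1/2}J^\NA(\varphi)^{1/2}$.

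For the polarisation term I would apply Theorem \ref{lem:algebraic-polarisation-error} with $\theta=c_1(K_X)$ and $\psi=\varphi_\beta$. The hypotheses need to be checked as follows.
\begin{enumerate}[label=\textup{(\alph*)}]
\item $\varphi=\varphi_{\mathcal T}$ is a semipositive PL metric, since it is represented by the semiample class $\cM$ on the smooth dominant model of Lemma \ref{lem:smooth-domination}.
\item $\varphi_\beta$ is the finite-energy psh metric of the rational dominant big model $\mathcal T_\beta=(\cY,\cM+\beta^{-1}K^{\log}_{\cY/X\times\mathbb P^1})$, by the discussion after Definition \ref{def:log-discrepancy-obstacle}.
\item $\varphi\leq\varphi_\beta$, by Lemma \ref{lem:model-independence}.
\item After one common translation we may take $\sup\varphi=\sup\varphi_\beta=0$, by the normalisation argument in \S\ref{sec: intersection estimates}. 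Both $\mathrm K^\beta$ and $\mathrm K^\NA$, as well as $J^\NA$ and $d_1^\NA(\varphi,\varphi_\beta)$, are invariant under this common translation. For $\mathrm K^\beta$ this holds because a central twist of $\cM$ does not affect $\Ent^\beta_{\mathcal T}$, and $\nabla_{K_X}E^\NA$ is translation invariant.
\end{enumerate}

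Theorem \ref{lem:algebraic-polarisation-error} then gives
\[
 |\nabla_{K_X}E^\NA(\varphi_\beta)-\nabla_{K_X}E^\NA(\varphi)|\leq C_{K_X}\bigl(E^\NA(\varphi_\beta)-E^\NA(\varphi)\bigr)^{1/2}J^\NA(\varphi)^{1/2}.
\]
Lemma \ref{lem:distance-entropy} identifies $E^\NA(\varphi_\beta)-E^\NA(\varphi)$ with $d_1^\NA(\varphi,\varphi_\beta)$. Setting $C:=C_{K_X}$, which depends only on $(X,L)$ and $K_X$ and hence only on $(X,L)$, and combining with the sign of the entropy bracket yields the claimed inequality.

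The main obstacle is in the verification step (b): we must make sure that the big-model case of Theorem \ref{lem:algebraic-polarisation-error} genuinely applies to $\varphi_\beta$. This requires that the positive-intersection description of $\nabla_{K_X}E^\NA(\varphi_\beta)$ agrees with the one used in $\mathrm K^\NA$, namely that the strongly continuous extension of $\nabla_{K_X}E^\NA$ to $\mathcal E^{1,\NA}$ coincides with the Fujita-limit formula. I would argue this through the approximation at the end of the proof of Theorem \ref{lem:algebraic-polarisation-error}. That argument produces PL metrics $\widetilde\psi_m\nearrow$ with $E^\NA(\widetilde\psi_m)\to E^\NA(\varphi_\beta)$. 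Since $\widetilde\psi_m\leq\varphi_\beta$ and the energies converge, $d_1^\NA(\widetilde\psi_m,\varphi_\beta)\to0$, so strong continuity identifies the two limits.
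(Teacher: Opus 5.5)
Your proposal is correct and is essentially the paper's proof. You discard the nonnegative entropy gap $\Ent^\beta_{\mathcal T}-\Ent^\NA(\varphi_\beta)$ via Lemma \ref{lem:entropy-bridge}, bound the difference of polarisation derivatives by Theorem \ref{lem:algebraic-polarisation-error}, and rewrite $E^\NA(\varphi_\beta)-E^\NA(\varphi)$ as $d_1^\NA(\varphi,\varphi_\beta)$ by Lemma \ref{lem:distance-entropy}; your choice $\theta=c_1(K_X)$ instead of the paper's $c_1(-K_X)$ is immaterial because of the absolute value. Your hypothesis checks, and your identification of the strongly continuous $\nabla_{K_X}E^\NA(\varphi_\beta)$ with the Fujita-limit value via $d_1^\NA(\widetilde\psi_m,\varphi_\beta)=E^\NA(\varphi_\beta)-E^\NA(\widetilde\psi_m)\to0$, simply make explicit what the paper leaves implicit (the monotonicity $\widetilde\psi_m\nearrow$ you mention is not established, but it is also not needed for this).
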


\begin{proof}
After one common translation, we may assume that $\sup\varphi=\sup\varphi_\beta=0$. By Lemma \ref{lem:entropy-bridge} and the definitions of the two Mabuchi functionals,
\[
\begin{aligned}
 \mathrm K^\beta(\varphi_{\mathcal T})
 &=\Ent_{\mathcal T}^\beta-\nabla_{-K_X}E^\NA(\varphi)\\
 &\geq\Ent^\NA(\varphi_\beta)-\nabla_{-K_X}E^\NA(\varphi)\\
 &=\mathrm K^\NA(\varphi_\beta)
 +\nabla_{-K_X}E^\NA(\varphi_\beta)
 -\nabla_{-K_X}E^\NA(\varphi).
\end{aligned}
\]
The result now follows from Theorem \ref{lem:algebraic-polarisation-error}, applied with $\theta=c_1(-K_X)$, and Lemma \ref{lem:distance-entropy}.
\end{proof}

\begin{corollary}\label{cor:transfer}
Suppose that there exists $\Lambda\geq0$ such that
\[
 \mathrm K^\NA(\varphi_\beta)
 \geq\Lambda\|\varphi_\beta\|_1
\]
for the log-discrepancy metric of every normal ample test configuration and every rational $\beta>1$. Then there exist $\varepsilon_\beta\geq0$, depending only on $(X,L)$, $\Lambda$, and $\beta$, such that $\varepsilon_\beta\to0$ and
\[
 \mathrm K^\beta(\varphi_{\mathcal T}) \geq\bigl(\Lambda-\varepsilon_\beta\bigr) \|\varphi_{\mathcal T}\|_1
\]
for every normal ample test configuration $\mathcal T$.
\end{corollary}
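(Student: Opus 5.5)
The plan is to combine Theorem \ref{thm:mabuchi-comparison} with the hypothesis, and then to control the norm of $\varphi_\beta$ in terms of the norm of $\varphi=\varphi_{\mathcal T}$. After a common translation, assume $\sup\varphi=\sup\varphi_\beta=0$. Write $\delta_\beta:=d_1^\NA(\varphi,\varphi_\beta)=E^\NA(\varphi_\beta)-E^\NA(\varphi)$, as in Lemma \ref{lem:distance-entropy}. Theorem \ref{thm:mabuchi-comparison} and the hypothesis give
\[
 \mathrm K^\beta(\varphi_{\mathcal T})\geq\Lambda\|\varphi_\beta\|_1-C\,\delta_\beta^{1/2}J^\NA(\varphi)^{1/2}.
\]
By Lemma \ref{lem:distance-to-fixed-subset} and $\Lambda\geq0$, we have $\Lambda\|\varphi_\beta\|_1\geq\Lambda\|\varphi\|_1-\Lambda\delta_\beta$. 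It then suffices to bound $\delta_\beta$ by $o(1)\cdot J^\NA(\varphi)$ as $\beta\to\infty$, uniformly in $\mathcal T$. Once that holds, Lemma \ref{lem:norm-comparison} converts $J^\NA(\varphi)\leq c_J^{-1}\|\varphi\|_1$, and we may take $\varepsilon_\beta:=c_J^{-1}\bigl(\Lambda\kappa_\beta+C\kappa_\beta^{1/2}\bigr)$, where $\delta_\beta\leq\kappa_\beta J^\NA(\varphi)$ and $\kappa_\beta\to0$.

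The main step is therefore the uniform bound $\delta_\beta\leq\kappa_\beta J^\NA(\varphi)$. Since $\varphi\leq\varphi_\beta\leq\varphi+\beta^{-1}A_X\circ r_\cY$ (Lemma \ref{lem:model-independence}), a crude estimate gives $\delta_\beta\leq\beta^{-1}\sup_{\Delta_\cY}A_X$, but this is not uniform in $\mathcal T$. Instead I will use $\delta_\beta=\beta^{-1}\Ent^\beta_{\mathcal T}$ together with monotonicity in $\beta$ (Proposition \ref{prop:Kbeta-dictionary}). This gives $\Ent^\beta_{\mathcal T}=\mathrm K^\beta(\varphi)+\nabla_{-K_X}E^\NA(\varphi)\leq\mathrm K^\NA(\varphi)+\nabla_{-K_X}E^\NA(\varphi)=\Ent^\NA(\varphi)$.

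That estimate is still not linear in $J^\NA(\varphi)$ with a uniform constant, so I would argue in a different way. Concavity of $E^\NA$ and the differential formula give
\[
 \delta_\beta\leq\int(\varphi_\beta-\varphi)\,\operatorname{MA}(\varphi)\leq\beta^{-1}\Ent^\NA(\varphi).
\]
The entropy again appears, so I expect to need a genuinely different input. The cleanest route is to bound $\delta_\beta\leq\beta^{-1}\Ent^\beta_{\mathcal T}$ using a lower bound $\mathrm K^\beta\geq-\nabla$ combined with Corollary \ref{cor:algebraic-polarisation-linear}. This gives $\Ent^\beta_{\mathcal T}=\mathrm K^\beta(\varphi)+\nabla_{-K_X}E^\NA(\varphi)$ with $|\nabla_{-K_X}E^\NA(\varphi)|\leq C'J^\NA(\varphi)$.

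It remains to bound $\mathrm K^\beta(\varphi)$ from above by $C''J^\NA(\varphi)$. If this fails, the desired inequality is trivially satisfied in the regime where $\mathrm K^\beta(\varphi)\geq\Lambda\|\varphi\|_1$. So I split into two cases.

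In the first case, $\mathrm K^\beta(\varphi_{\mathcal T})\geq\Lambda c_J^{-1}J^\NA(\varphi)\geq\Lambda\|\varphi\|_1$ (using $C_J=1$), and there is nothing to prove.

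In the second case, $\Ent^\beta_{\mathcal T}\leq(\Lambda c_J^{-1}+C')J^\NA(\varphi)$. Hence $\delta_\beta\leq\beta^{-1}(\Lambda c_J^{-1}+C')J^\NA(\varphi)$, which is the required bound with $\kappa_\beta=O(\beta^{-1})$. This produces $\varepsilon_\beta=O(\beta^{-1/2})$, depending only on $(X,L)$, $\Lambda$ and $\beta$.

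The delicate point is this case split, together with checking that the constants from Theorem \ref{thm:mabuchi-comparison} and Corollary \ref{cor:algebraic-polarisation-linear} are independent of $\mathcal T$, which the earlier statements assert.
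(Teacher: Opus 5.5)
Your final argument is correct and is essentially the paper's proof. Both split into cases according to whether $\mathrm K^\beta(\varphi_{\mathcal T})$ already exceeds the target; you use the threshold $\Lambda c_J^{-1}J^\NA(\varphi)$ rather than $\Lambda\|\varphi_{\mathcal T}\|_1$, which is only a cosmetic difference. Otherwise both bound $\beta\, d_1^\NA(\varphi,\varphi_\beta)=\Ent^\beta_{\mathcal T}=\mathrm K^\beta(\varphi)+\nabla_{-K_X}E^\NA(\varphi)$ linearly via Corollary \ref{cor:algebraic-polarisation-linear}, and then feed this into Theorem \ref{thm:mabuchi-comparison} and Lemma \ref{lem:distance-to-fixed-subset}. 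The exploratory detours in the middle (the crude bound and the entropy bounds) are unnecessary and can be dropped.
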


\begin{proof}
Let $\varphi = \varphi_{\mathcal{T}}$ be the metric associated to the test configuration. If $\|\varphi\|_1=0$, then $\varphi$ is constant by Lemma \ref{lem:zero-norm}. The translation invariance of the algebraic formula for $\mathrm K^\beta$ therefore gives $\mathrm K^\beta(\varphi)=0$, and there is nothing to prove.

Suppose that $\|\varphi\|_1>0$. By Corollary \ref{cor:algebraic-polarisation-linear} and Lemma \ref{lem:norm-comparison}, after enlarging a constant $C_0>0$ depending only on $(X,L)$, we have
\[
 |\nabla_{-K_X}E^\NA(\varphi)|
 \leq C_0\|\varphi\|_1.
\]
If $\mathrm K^\beta(\varphi)\geq\Lambda\|\varphi\|_1$, the required inequality is immediate. We may therefore assume that $\mathrm K^\beta(\varphi)<\Lambda\|\varphi\|_1$. Lemma \ref{lem:distance-entropy} then gives
\[
\begin{aligned}
 \beta d_1^\NA(\varphi,\varphi_\beta)
 &=\Ent_{\mathcal T}^\beta\\
 &=\mathrm K^\beta(\varphi)
 +\nabla_{-K_X}E^\NA(\varphi)\\
 &\leq(\Lambda+C_0)\|\varphi\|_1.
\end{aligned}
\]
Moreover, Lemma \ref{lem:distance-to-fixed-subset} and Lemma \ref{lem:norm-comparison} give inequalities $\|\varphi_\beta\|_1 \geq\|\varphi\|_1-d_1^\NA(\varphi,\varphi_\beta)$ and $J^\NA(\varphi)\leq c_J^{-1}\|\varphi\|_1$. Theorem \ref{thm:mabuchi-comparison} therefore gives
\[
\begin{aligned}
 \mathrm K^\beta(\varphi)
 &\geq\Lambda\|\varphi_\beta\|_1
 -C\,d_1^\NA(\varphi,\varphi_\beta)^{1/2}J^\NA(\varphi)^{1/2}\\
 &\geq\left(\Lambda-
 \frac{\Lambda(\Lambda+C_0)}{\beta}
 -C\left(\frac{\Lambda+C_0}{c_J\beta}\right)^{1/2}\right)
 \|\varphi\|_1.
\end{aligned}
\]
Defining 
\[\varepsilon_\beta: = \frac{\Lambda(\Lambda+C_0)}{\beta}
 +C\left(\frac{\Lambda+C_0}{c_J\beta}\right)^{1/2}
\]
we see immediately that $\varepsilon_\beta \geq 0$ and that $\varepsilon_\beta = O(\beta^{-1/2})$, i.e. $\varepsilon_\beta\to0$. Furthermore, since $C_0$, $c_J$ depend only on $(X,L)$, we conclude that $\varepsilon_\beta$ also depends only on $(X,L)$, $\Lambda$, and $\beta$. This concludes the proof.
\end{proof}

\begin{remark}\label{rem:BJ-alternative}
The general H\"older estimate of \cite[Lemma~2.8]{BJ23} is also strong enough for the conclusion of Corollary \ref{cor:transfer}. Combining it with Lemma \ref{lem:distance-entropy} gives an error of order $O(\beta^{-\alpha_n})$, for a dimension-dependent exponent $\alpha_n\in(0,1)$. The purpose of Theorem \ref{lem:algebraic-polarisation-error} is therefore to obtain a sharper error term and the explicit constant $\varepsilon_\beta$.
\end{remark}

\section{Consequences for stability and semistability} \label{sec:stability-consequences}

We will now use Corollary \ref{cor:transfer} to obtain a direct comparison between $\widehat{\mathrm{K}}$-stability and $\mathrm{K}^\beta$-stability.

\subsection{Uniform stability when the automorphism group is discrete}

\begin{theorem}\label{cor:algebraic}
Let $(X,L)$ be smooth with $\Aut^0(X,L)$ trivial. The following are equivalent:
\begin{enumerate}[label=\textup{(\roman*)}]
\item $(X,L)$ is uniformly $\widehat{\mathrm K}$-polystable;
\item $(X,L)$ is uniformly $\mathrm K^\beta$-stable for every sufficiently large rational $\beta$;
\item $(X,L)$ is uniformly $\mathrm K^\beta$-stable for some rational $\beta>1$.
\end{enumerate}
\end{theorem}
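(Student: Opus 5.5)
The plan is to prove the cycle (i)$\Rightarrow$(ii)$\Rightarrow$(iii)$\Rightarrow$(i). The implication (ii)$\Rightarrow$(iii) is trivial.

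For (iii)$\Rightarrow$(i), I will invoke Proposition \ref{prop:Kbeta-dictionary}. If $\mathrm K^{\beta}(\varphi_{\mathcal T})\geq\gamma J^\NA(\varphi_{\mathcal T})$ for all normal ample $\mathcal T$, then $\mathrm K^\NA(\varphi_{\mathcal T})\geq\gamma J^\NA(\varphi_{\mathcal T})$, so $(X,L)$ is uniformly K-stable in the sense of Remark \ref{rem: DF vs mabuchi}. Since $\Aut^0(X,L)$ is trivial, \cite[Corollary B]{Tru26} identifies uniform K-stability with uniform $\widehat{\mathrm K}$-polystability. If one prefers a more explicit route, approximate an arbitrary $\varphi\in\mathcal E^{1,\NA}(L)$ by ample test configuration metrics with converging entropy, as in the proof of Proposition \ref{prop:input}. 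The inequality $\mathrm K^\NA\geq\gamma c_J\|\cdot\|_1$ then passes to the limit by Lemmas \ref{lem:norm-comparison} and \ref{lem:distance-to-fixed-subset}. Because $G$ is trivial, the product locus consists of constants and the right-hand side of \eqref{eq:Kb-uniform} is $\sigma\|\varphi\|_1$, which is what we need.

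For (i)$\Rightarrow$(ii), assume $\mathrm K^\NA(\varphi)\geq\sigma\|\varphi\|_1$ for all $\varphi\in\mathcal E^{1,\NA}(L)$. The steps are as follows.
\begin{enumerate}[label=\textup{(\alph*)}]
\item Since every log-discrepancy metric $\varphi_\beta$ lies in $\mathcal E^{1,\NA}(L)$, the hypothesis of Corollary \ref{cor:transfer} holds directly with $\Lambda=\sigma$. Proposition \ref{prop:input} gives the same conclusion if one only assumes the inequality on ample test configuration metrics.
\item Corollary \ref{cor:transfer} then gives $\mathrm K^\beta(\varphi_{\mathcal T})\geq(\sigma-\varepsilon_\beta)\|\varphi_{\mathcal T}\|_1$ for every normal ample $\mathcal T$, where $\varepsilon_\beta$ is independent of $\mathcal T$ and $\varepsilon_\beta\to0$.
\item Choose $\beta_0$ such that $\varepsilon_\beta\leq\sigma/2$ for all rational $\beta\geq\beta_0$. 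This is possible because $\varepsilon_\beta$ is explicitly decreasing in $\beta$, and in any case monotonicity in $\beta$ from Proposition \ref{prop:Kbeta-dictionary} handles all larger $\beta$ once one $\beta_0$ works.
\item Conclude with Lemma \ref{lem:norm-comparison}: $\mathrm K^\beta(\varphi_{\mathcal T})\geq\frac{\sigma}{2}c_JJ^\NA(\varphi_{\mathcal T})$. This is uniform $\mathrm K^\beta$-stability with $\gamma=\sigma c_J/2$, and $\gamma$ does not depend on $\beta$.
\end{enumerate}

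The substantive work has already been done in Theorem \ref{thm:mabuchi-comparison} and Corollary \ref{cor:transfer}, so no step here should be a real obstacle. The one point to watch is the uniformity of $\varepsilon_\beta$ over all test configurations, which Corollary \ref{cor:transfer} provides, together with the care needed with normalisations when passing between $J^\NA$ and $\|\cdot\|_1$. For (iii)$\Rightarrow$(i), the delicate input is Trusiani's identification of uniform K-stability with $\widehat{\mathrm K}$-polystability. I will simply cite it.
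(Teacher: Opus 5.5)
Your proposal is correct and follows the paper's proof: (i)$\Rightarrow$(ii) goes through Corollary~\ref{cor:transfer} with $\Lambda=\sigma$ and then Lemma~\ref{lem:norm-comparison}, and (iii)$\Rightarrow$(i) goes through Proposition~\ref{prop:Kbeta-dictionary} and \cite[Corollary B]{Tru26}; you are also right that Proposition~\ref{prop:input} is redundant here, since (i) already gives the inequality on all of $\mathcal E^{1,\NA}(L)$, which contains $\varphi_\beta$. One caveat on your optional ``explicit route'' for (iii)$\Rightarrow$(i): approximating an arbitrary finite-entropy $\varphi\in\mathcal E^{1,\NA}(L)$ with converging entropy is not ``as in'' Proposition~\ref{prop:input}, which only handles the big-model metrics $\varphi_\beta$, but is the general entropy-regularisation result behind \cite[Corollary B]{Tru26}, so it unpacks that citation rather than giving an independent argument.
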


\begin{proof}
Assume first that $(X,L)$ is uniformly $\widehat{\mathrm K}$-polystable. Since $\Aut^0(X,L)$ is trivial, there exists $\sigma>0$ such that
\[
 \mathrm K^\NA(\psi)\geq\sigma\|\psi\|_1
\]
for every $\psi\in\mathcal E^{1,\NA}(L)$. Proposition \ref{prop:input} and Corollary \ref{cor:transfer}, applied with $\Lambda=\sigma$, gives $\varepsilon_\beta=O(\beta^{-1/2})$ such that
\[
 \mathrm K^\beta(\varphi_{\mathcal T})
 \geq(\sigma-\varepsilon_\beta)\|\varphi_{\mathcal T}\|_1.
\]
We thus choose $\beta_0>1$ such that $\varepsilon_\beta\leq\sigma/2$ whenever $\beta\geq\beta_0$. Lemma \ref{lem:norm-comparison} then gives
\[
 \mathrm K^\beta(\varphi_{\mathcal T}) \geq\frac\sigma2\|\varphi_{\mathcal T}\|_1 \geq\frac{\sigma c_J}{2}J^\NA(\varphi_{\mathcal T})
\]
for every normal ample test configuration and every rational $\beta\geq\beta_0$. This proves \textup{(ii)}, and \textup{(ii)}$\Rightarrow$\textup{(iii)} is immediate.

Conversely, suppose that \textup{(iii)} holds. By Proposition \ref{prop:Kbeta-dictionary} $(X,L)$ is uniformly K-stable. Since $\Aut^0(X,L)$ is trivial, \cite[Corollary B]{Tru26} gives uniform $\widehat{\mathrm K}$-polystability, proving \textup{(i)}.
\end{proof}

\subsection{Reduced \texorpdfstring{$\mathrm K^\beta$}{K-beta}-polystability}
\label{sec:reduced-Kbeta}

Let $G:=\operatorname{Aut}^0(X,L)$, and suppose that $G$ is reductive. Fix a maximal algebraic torus $\mathbb T\subset G$, and let $T\subset\mathbb T$ be its maximal compact subtorus. We write
\[
 N_{\mathbb Q}:=\operatorname{Hom}(\mathbb G_m,\mathbb T)\otimes_{\mathbb Z}\mathbb Q,
 \qquad N_{\mathbb R}:=N_{\mathbb Q}\otimes_{\mathbb Q}\mathbb R.
\]
If $\varphi$ is a $\mathbb T$-equivariant metric and $\xi\in N_{\mathbb Q}$, the rational product twist of the corresponding test configuration is denoted by $\mathcal T_\xi$, and its metric by $\xi\star\varphi$. The proof of \cite[Theorem 8.19]{BJ26} shows that this action extends continuously to $N_{\mathbb R}$ and that
\[
 d_1^\NA(\xi\star\varphi,\xi\star\psi)
 =d_1^\NA(\varphi,\psi).
\]
We set
\[
 q_{\mathbb T}(\varphi)
 :=\inf_{\xi\in N_{\mathbb R}}\|\xi\star\varphi\|_1.
\]
It follows from the preceding equality and Lemma \ref{lem:distance-to-fixed-subset} that
\begin{equation}\label{eq:reduced-norm-Lipschitz}
 |q_{\mathbb T}(\varphi)-q_{\mathbb T}(\psi)|
 \leq d_1^\NA(\varphi,\psi).
\end{equation}

For $\xi\in N_{\mathbb Q}$, the value of the Futaki character on $\xi$ is
\[
 \operatorname{Fut}_{X,L}(\xi)
 :=\mathrm K^\NA(\xi\star0).
\]
We write $\operatorname{Fut}_{X,L}=0$ when this character vanishes identically. In that case the non-Archimedean Mabuchi functional is invariant under product twists; \cite[Corollary 8.8]{BJ26}.

For a rational number $\beta>1$, we define
\[
 \mathrm K_{\mathbb T,\mathrm{red}}^\beta(\varphi_{\mathcal T}) :=\sup_{\xi\in N_{\mathbb Q}} \mathrm K^\beta(\varphi_{\mathcal T_\xi}).
\]
\begin{definition}
 We say that $(X,L)$ is \emph{uniformly reduced $\mathrm K^\beta$-polystable} if there exists $\gamma>0$ such that
\[
 \mathrm K_{\mathbb T,\mathrm{red}}^\beta(\varphi_{\mathcal T})
 \geq\gamma q_{\mathbb T}(\varphi_{\mathcal T})
\]
for every $\mathbb T$-equivariant normal ample test configuration $\mathcal T$.
\end{definition}

This stability condition is close to, but not identical with, the reduced stability condition of \cite[\S10, Equations (86), (89) and Theorem 10.4]{DZ26}. In particular, Darvas--Zhang use the reduced radial $J$-functional and a $G$-uniform Mabuchi inequality on log-discrepancy models, while here we optimise the fixed $\beta$-Mabuchi invariant over rational product twists of an ample test configuration.

We assume from now on that $\operatorname{Fut}_{X,L}=0$.

\begin{lemma}\label{lem:reduced-Kbeta-basic}
Every $\mathbb T$-equivariant normal ample test-configuration metric satisfies
\[
 \mathrm K_{\mathbb T,\mathrm{red}}^\beta(\varphi)
 \leq\mathrm K^\NA(\varphi).
\]
If $q_{\mathbb T}(\varphi)=0$, then
\[
 \mathrm K_{\mathbb T,\mathrm{red}}^\beta(\varphi)=0.
\]
\end{lemma}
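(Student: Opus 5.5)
The first inequality should follow termwise from Proposition \ref{prop:Kbeta-dictionary} together with twist-invariance of $\mathrm K^\NA$. Fix $\xi\in N_{\mathbb Q}$. The twist $\mathcal T_\xi$ is again a normal ample test configuration, so Proposition \ref{prop:Kbeta-dictionary} gives
\[
\mathrm K^\beta(\varphi_{\mathcal T_\xi})\leq\mathrm K^\NA(\varphi_{\mathcal T_\xi})=\mathrm K^\NA(\xi\star\varphi).
\]
Since $\operatorname{Fut}_{X,L}=0$, \cite[Corollary 8.8]{BJ26} gives $\mathrm K^\NA(\xi\star\varphi)=\mathrm K^\NA(\varphi)$. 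Taking the supremum over $\xi\in N_{\mathbb Q}$ then yields $\mathrm K^\beta_{\mathbb T,\mathrm{red}}(\varphi)\leq\mathrm K^\NA(\varphi)$.

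For the second claim, assume $q_{\mathbb T}(\varphi)=0$. I would show two inequalities. The lower bound $\mathrm K^\beta_{\mathbb T,\mathrm{red}}(\varphi)\geq0$ comes from the trivial twist $\xi=0$ if $\mathrm K^\beta(\varphi)\geq0$. More robustly, it comes from locating a twist that makes $\varphi$ constant.

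The key step is to show that $q_{\mathbb T}(\varphi)=0$ forces $\varphi$ to be a product metric, i.e.\ $\varphi=\xi_0\star c$ for some $\xi_0\in N_{\mathbb Q}$ and constant $c$. The infimum over $N_{\mathbb R}$ should be attained. The reason is that $\xi\mapsto\|\xi\star\varphi\|_1$ is continuous and proper on $N_{\mathbb R}$: it grows like a norm, since $\|\xi\star 0\|_1$ is comparable to $|\xi|$ and $d_1^\NA$ is twist-invariant. At a minimiser $\xi_1$ we get $\|\xi_1\star\varphi\|_1=0$, so $\xi_1\star\varphi$ is constant by Lemma \ref{lem:zero-norm}. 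Hence $\varphi=(-\xi_1)\star c$ is a real product metric.

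Because $\varphi$ comes from a rational ample test configuration, $\xi_1$ should be rational. I would argue this through the correspondence between product test configurations and cocharacters, using the values of $\varphi$ at rational divisorial valuations. This rationality step is the main obstacle. If it cannot be settled directly, I would instead use a density argument in $N_{\mathbb Q}$ combined with continuity of $\mathrm K^\beta$ along product twists, which would also need justification.

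Granting rationality, take $\xi=\xi_1\in N_{\mathbb Q}$. Then $\mathcal T_\xi$ is equivalent to a trivial test configuration up to translation. By translation invariance of the algebraic formula for $\mathrm K^\beta$, as used in Corollary \ref{cor:transfer}, this gives $\mathrm K^\beta(\varphi_{\mathcal T_\xi})=0$, so the supremum is at least $0$.

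For the upper bound, the first part gives $\mathrm K^\beta_{\mathbb T,\mathrm{red}}(\varphi)\leq\mathrm K^\NA(\varphi)$. Writing $\varphi=(-\xi_1)\star c$, twist-invariance and translation invariance give $\mathrm K^\NA(\varphi)=\mathrm K^\NA((-\xi_1)\star0)=\operatorname{Fut}_{X,L}(-\xi_1)=0$. Combining the two bounds gives $\mathrm K^\beta_{\mathbb T,\mathrm{red}}(\varphi)=0$.
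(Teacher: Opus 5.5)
Your proposal follows the paper's proof. The first inequality comes termwise from Proposition~\ref{prop:Kbeta-dictionary} together with twist-invariance of $\mathrm K^\NA$ when $\operatorname{Fut}_{X,L}=0$. The second comes from a rational twist sending $\varphi$ to a constant, where $\mathrm K^\beta$ vanishes, combined with $\mathrm K^\NA(\varphi)=0$.

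The one step you leave open is that the twist $\xi_1$ killing $\varphi$ can be taken in $N_{\mathbb Q}$. The paper does not prove this from scratch either. It imports it in one line from the description of the real product locus in \cite[Proposition 4.11, Definition 4.12 and Corollary 4.14]{BJ26}: an algebraic metric at zero reduced distance is a rational product metric modulo translation. That citation also makes your properness-and-attainment argument unnecessary, so you can simply use it.

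If you want a self-contained argument, your sketched route should close, though you would need to check the following. Because $\varphi$ is a rational Fubini--Study metric, the numbers $\sup_{X^{\an}}(\log|s|-m\varphi)$ for $\mathbb T$-weight vectors $s\in H^0(X,mL)$ are rational. On the other hand, the product description $\varphi=(-\xi_1)\star c$ should compute them as $\pm\langle\alpha,\xi_1\rangle-mc$, where $\alpha$ is the weight of $s$ and the sign depends on conventions. This rests on the norm of a product metric being the weight norm, which you would need to verify. Differences of such weights in a fixed degree span $\operatorname{Hom}(\mathbb T,\mathbb G_m)\otimes_{\mathbb Z}\mathbb Q$, since $\mathbb T$ acts effectively on $X$. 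This forces $\xi_1\in N_{\mathbb Q}$, so the density/continuity fallback you mention is not needed.
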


\begin{proof}
For every $\xi\in N_{\mathbb Q}$, Proposition \ref{prop:Kbeta-dictionary} and the vanishing of the Futaki character give
\[
 \mathrm K^\beta(\xi\star\varphi)
 \leq\mathrm K^\NA(\xi\star\varphi)
 =\mathrm K^\NA(\varphi).
\]
Taking the supremum proves the first assertion.

Suppose that $q_{\mathbb T}(\varphi)=0$. By the description of the real product locus in \cite[Proposition 4.11, Definition 4.12 and Corollary 4.14]{BJ26}, an algebraic metric at zero reduced distance is a rational product metric modulo translation. Hence some rational product twist takes $\varphi$ to a constant metric. For this twist the $\mathrm K^\beta$-invariant vanishes, so $\mathrm K_{\mathbb T,\mathrm{red}}^\beta(\varphi)\geq0$. On the other hand, $\mathrm K^\NA(\varphi)=0$ by the vanishing of the Futaki character, and the first assertion gives the reverse inequality.
\end{proof}

\begin{lemma}\label{lem:optimal-rational-twist}
Let $\varphi$ be a $\mathbb T$-equivariant ample test-configuration metric with $q_{\mathbb T}(\varphi)>0$. Then, for every $a>1$, there exists $\xi\in N_{\mathbb Q}$ such that $q_{\mathbb T}(\xi\star\varphi)=q_{\mathbb T}(\varphi)$ and $\|\xi\star\varphi\|_1<a q_{\mathbb T}(\varphi)$.
\end{lemma}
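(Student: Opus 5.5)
First, I will check that $q_{\mathbb T}$ is invariant under twists. For $\xi\in N_{\mathbb Q}$ and $\eta\in N_{\mathbb R}$, the twisting action is additive, so $\eta\star(\xi\star\varphi)=(\eta+\xi)\star\varphi$. I expect this to follow from the construction in the proof of \cite[Theorem 8.19]{BJ26}: it holds for rational cocharacters because composing product twists corresponds to adding cocharacters, and it passes to $N_{\mathbb R}$ by continuity. Since $\eta\mapsto\eta+\xi$ is a bijection of $N_{\mathbb R}$, taking the infimum over $\eta$ gives
\[
q_{\mathbb T}(\xi\star\varphi)=\inf_{\eta\in N_{\mathbb R}}\|(\eta+\xi)\star\varphi\|_1=q_{\mathbb T}(\varphi).
\]
So the first required property holds for every $\xi\in N_{\mathbb Q}$, and the only real content is the second one.

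For the strict inequality, set $q:=q_{\mathbb T}(\varphi)>0$. Since $aq>q$, the definition of the infimum gives $\eta\in N_{\mathbb R}$ with $\|\eta\star\varphi\|_1<aq$. The issue is that $\eta$ may be irrational, so I will approximate it by rational cocharacters. Consider the map $\eta\mapsto\|\eta\star\varphi\|_1$ on $N_{\mathbb R}$. By the continuity of the action $N_{\mathbb R}\to\mathcal E^{1,\NA}(L)$ from \cite[Theorem 8.19]{BJ26}, composed with the $1$-Lipschitz function $\|\cdot\|_1$ of Lemma \ref{lem:distance-to-fixed-subset}, this map is continuous. So the set $\{\eta\in N_{\mathbb R}:\|\eta\star\varphi\|_1<aq\}$ is open and nonempty. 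Because $N_{\mathbb Q}$ is dense in $N_{\mathbb R}$, the set contains some $\xi\in N_{\mathbb Q}$, and this $\xi$ is the required twist.

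The main obstacle is to justify continuity of $\eta\mapsto\eta\star\varphi$ for the $d_1^\NA$ topology, together with the additivity used in the first step. The paper states only that the action extends continuously, so I will make the continuity concrete. My approach is to realise $\eta\star\varphi$ as the potential $v\mapsto\varphi(\eta^{-1}\cdot v)+\langle\mu,\eta\rangle$-type expression on $\mathbb T$-invariant valuations, following \cite[\S4]{BJ26}. Then $\sup|\eta\star\varphi-\eta'\star\varphi|\le C|\eta-\eta'|$ on the compact model, and uniform convergence implies $d_1^\NA$-convergence.

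If that route is too heavy, there is a fallback that only needs continuity at rational points. From the isometry property, $d_1^\NA(\xi\star\varphi,\eta\star\varphi)=d_1^\NA(\varphi,(\eta-\xi)\star\varphi)$. This tends to $0$ as $\eta-\xi\to0$, by continuity of the action at $0$ applied to the fixed metric $\varphi$.
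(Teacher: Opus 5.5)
Your proposal is correct and follows the paper's proof essentially step for step. Both arguments get invariance of $q_{\mathbb T}$ under twists from additivity of the $N_{\mathbb R}$-action and the bijection $\zeta\mapsto\zeta+\theta$, pick $\eta\in N_{\mathbb R}$ with $\|\eta\star\varphi\|_1<aq_{\mathbb T}(\varphi)$ from the infimum, and then approximate $\eta$ by a rational $\xi$ using continuity of $\xi\mapsto\|\xi\star\varphi\|_1$ and density of $N_{\mathbb Q}$. For that continuity, the paper takes the Lipschitz bound $d_1^\NA(\xi\star\varphi,\xi'\star\varphi)\le C\|\xi-\xi'\|$ directly from the proof of \cite[Theorem 8.19]{BJ26}, which is the same cited input your continuity claim and isometry-based fallback rely on.
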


\begin{proof}
By definition, $q_{\mathbb T}(\varphi) =\inf_{\eta\in N_{\mathbb R}}\|\eta\star\varphi\|_1$ and since $a>1$ and $q_{\mathbb T}(\varphi)>0$, we have $a q_{\mathbb T}(\varphi)>q_{\mathbb T}(\varphi)$. The definition of the infimum therefore gives $\eta\in N_{\mathbb R}$ such that $\|\eta\star\varphi\|_1<a q_{\mathbb T}(\varphi)$.

The twisting operation is an action of the additive group $N_{\mathbb R}$. Hence, for every $\theta\in N_{\mathbb R}$,
\[
\begin{aligned}
 q_{\mathbb T}(\theta\star\varphi) &=\inf_{\zeta\in N_{\mathbb R}} \|\zeta\star(\theta\star\varphi)\|_1\\
 &=\inf_{\zeta\in N_{\mathbb R}} \|(\zeta+\theta)\star\varphi\|_1 =q_{\mathbb T}(\varphi),
\end{aligned}
\]
where the last equality follows because
$\zeta\mapsto\zeta+\theta$ is a bijection of $N_{\mathbb R}$.

Moreover, the proof of \cite[Theorem 8.19]{BJ26} gives $d_1^\NA(\xi\star\varphi,\xi'\star\varphi) \leq C\|\xi-\xi'\|$ for $\xi,\xi'\in N_{\mathbb R}$. Notice that the map $N_{\mathbb R}\ni\xi\longmapsto\|\xi\star\varphi\|_1$ is continuous, and as $N_{\mathbb Q}$ is dense in $N_{\mathbb R}$, we may choose $\xi\in N_{\mathbb Q}$ sufficiently close to $\eta$ that $\|\xi\star\varphi\|_1<a q_{\mathbb T}(\varphi)$ with $q_{\mathbb T}(\xi\star\varphi)=q_{\mathbb T}(\varphi)$.
\end{proof}

\begin{theorem}\label{thm:reduced-Kbeta-comparison}
Suppose that $G=\operatorname{Aut}^0(X,L)$ is reductive with maximal torus $\mathbb T\subset G$ and that $\operatorname{Fut}_{X,L}=0$. Then the following conditions are equivalent:
\begin{enumerate}[label=\textup{(\roman*)}]
\item $(X,L)$ is uniformly $\widehat{\mathrm K}$-polystable;
\item $(X,L)$ is uniformly reduced $\mathrm K^\beta$-polystable for every sufficiently large rational $\beta$;
\item $(X,L)$ is uniformly reduced $\mathrm K^\beta$-polystable for some rational $\beta>1$.
\end{enumerate}
\end{theorem}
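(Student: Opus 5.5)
The plan mirrors the proof of Theorem \ref{cor:algebraic}, with $\|\cdot\|_1$ replaced by $q_{\mathbb T}$ and the supremum over rational product twists used to absorb the automorphisms.

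\emph{(i)$\Rightarrow$(ii).} By uniform $\widehat{\mathrm K}$-polystability there is $\sigma>0$ with $\mathrm K^\NA(\psi)\geq\sigma d_1^\NA(\psi,\mathcal P_{\mathbb R})$ on $\mathcal E^{1,\NA}(L)$. The first step is a comparison $d_1^\NA(\psi,\mathcal P_{\mathbb R})\geq c\,q_{\mathbb T}(\psi)$ for $\mathbb T$-invariant $\psi$, with $c>0$ fixed. For this I would invoke the description of $\mathcal P_{\mathbb R}$ in \cite[\S8]{BJ26}: real product metrics are $G$-translates of $N_{\mathbb R}$-twists of constants, and for $\mathbb T$-invariant $\psi$ one may restrict to the torus, as in the proof of \cite[Theorem 8.19]{BJ26}. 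I expect this reduction from $G$ to $\mathbb T$ to be the main obstacle. If it only holds up to a constant, the constant can be absorbed into $\sigma$.

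Fix a $\mathbb T$-equivariant $\mathcal T$ with $q_{\mathbb T}(\varphi_{\mathcal T})>0$; when $q_{\mathbb T}=0$, Lemma \ref{lem:reduced-Kbeta-basic} gives the claim. By Lemma \ref{lem:optimal-rational-twist} choose $\xi\in N_{\mathbb Q}$ with $\|\xi\star\varphi\|_1<2q_{\mathbb T}(\varphi)$ and $q_{\mathbb T}(\xi\star\varphi)=q_{\mathbb T}(\varphi)$. Set $\varphi':=\varphi_{\mathcal T_\xi}$, which is again a $\mathbb T$-equivariant ample test configuration metric, and let $\varphi'_\beta$ be its log-discrepancy metric. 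The construction is equivariant, so $\varphi'_\beta$ is $\mathbb T$-invariant. Arguing as in Proposition \ref{prop:input}, Trusiani's approximants can be taken $\mathbb T$-invariant (or $q_{\mathbb T}$ can be used directly, via the Lipschitz bound \eqref{eq:reduced-norm-Lipschitz}). This gives
\[
\mathrm K^\NA(\varphi'_\beta)\geq\sigma c\, q_{\mathbb T}(\varphi'_\beta)\geq\sigma c\bigl(q_{\mathbb T}(\varphi)-d_1^\NA(\varphi',\varphi'_\beta)\bigr).
\]
Now apply Theorem \ref{thm:mabuchi-comparison} to $\varphi'$. By Corollary \ref{cor:algebraic-polarisation-linear}, Lemma \ref{lem:norm-comparison} and Lemma \ref{lem:distance-entropy}, exactly as in Corollary \ref{cor:transfer}, either $\mathrm K^\beta(\varphi')\geq\sigma c\,q_{\mathbb T}(\varphi)$ already holds, or
\[
\beta d_1^\NA(\varphi',\varphi'_\beta)\leq(\sigma c+C_0)\|\varphi'\|_1\leq 2(\sigma c+C_0)q_{\mathbb T}(\varphi).
\]
In the second case, $J^\NA(\varphi')\leq c_J^{-1}\|\varphi'\|_1\leq 2c_J^{-1}q_{\mathbb T}(\varphi)$ gives
\[
\mathrm K^\beta(\varphi')\geq(\sigma c-\varepsilon_\beta)\,q_{\mathbb T}(\varphi),\qquad \varepsilon_\beta=O(\beta^{-1/2}).
\]
Since $\mathrm K^\beta_{\mathbb T,\mathrm{red}}(\varphi)\geq\mathrm K^\beta(\varphi')$, taking $\beta$ large gives (ii) with $\gamma=\sigma c/2$.

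\emph{(ii)$\Rightarrow$(iii)} is trivial.

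\emph{(iii)$\Rightarrow$(i).} By Lemma \ref{lem:reduced-Kbeta-basic}, $\mathrm K^\NA(\varphi)\geq\mathrm K^\beta_{\mathbb T,\mathrm{red}}(\varphi)\geq\gamma q_{\mathbb T}(\varphi)$ for every $\mathbb T$-equivariant ample test configuration metric. Since $q_{\mathbb T}$ is comparable to the reduced $J$-norm of \cite{BJ26,Li21}, this is $\mathbb T$-equivariant uniform reduced K-stability. By the torus-equivariant reduction of \cite{BJ26}, together with the equivariant version of \cite[Corollary B]{Tru26} (the analogue of the argument used in Theorem \ref{cor:algebraic}), this implies uniform $\widehat{\mathrm K}$-polystability. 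Here $\operatorname{Fut}_{X,L}=0$ ensures $\mathrm K^\NA$ is twist invariant, and reductivity of $G$ is part of the hypothesis.
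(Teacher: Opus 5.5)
Your proposal is correct and follows the paper's proof essentially step for step. Both use the same optimal rational twist from Lemma~\ref{lem:optimal-rational-twist}, the same $\mathbb T$-invariant log-discrepancy metric, and the same dichotomy leading to the $O(\beta^{-1/2})$ error via Theorem~\ref{thm:mabuchi-comparison}. For (iii)$\Rightarrow$(i) the paper also passes to models by Trusiani's equivariant approximation and then applies the torus reduction of \cite{BJ26}.

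The $G$-to-$\mathbb T$ comparison you flag as the main obstacle is exactly what the paper imports from \cite[Corollary 8.18 and the proof of Theorem 8.19]{BJ26}. That reference gives $\mathrm K^\NA(\psi)\geq\sigma q_{\mathbb T}(\psi)$ directly on $\mathcal E^{1,\NA}(L)^{\mathbb T}$. Also, no Trusiani approximation is needed in (i)$\Rightarrow$(ii), since (i) already holds on all of $\mathcal E^{1,\NA}(L)$.
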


\begin{proof}
We assume \textup{(i)}. By \cite[Corollary 8.18 and the proof of Theorem 8.19]{BJ26}, there exists $\sigma>0$ such that
\[
 \mathrm K^\NA(\psi)\geq\sigma q_{\mathbb T}(\psi)
\]
for every $\psi\in\mathcal E^{1,\NA}(L)^{\mathbb T}$.

We fix a $\mathbb T$-equivariant normal ample test-configuration metric $\varphi$. If $q_{\mathbb T}(\varphi)=0$, Lemma \ref{lem:reduced-Kbeta-basic} gives $K_{\mathbb T,\mathrm{red}}^\beta(\varphi)=0$, so the desired inequality is automatic.

Hence, we suppose that $q_{\mathbb T}(\varphi)>0$. By Lemma \ref{lem:optimal-rational-twist}, applied with $a=2$, there exists $\xi\in N_{\mathbb Q}$ such that $q_{\mathbb T}(\xi\star\varphi)=q_{\mathbb T}(\varphi)$ and $\|\xi\star\varphi\|_1<2q_{\mathbb T}(\varphi)$. We define $\psi:=\xi\star\varphi$. Since $\xi$ is rational, $\psi$ is the ample test-configuration metric associated to the rational product twist $\mathcal T_\xi$ of $\mathcal T$.

We now choose a $\mathbb T\times\mathbb G_m$-equivariant smooth dominant model with SNC central fibre, as in the proof of \cite[Corollary A]{Tru26}, and let $\psi_\beta$ be the log-discrepancy metric associated to $\psi$. The obstacle defining $\psi_\beta$ is $\mathbb T$-invariant. Since the action of $\mathbb T$ preserves the set of psh metrics lying below the obstacle, it also preserves their supremum. Thus $\psi_\beta\in\mathcal E^{1,\NA}(L)^{\mathbb T}$, and
\[
 \mathrm K^\NA(\psi_\beta) \geq\sigma q_{\mathbb T}(\psi_\beta).
\]

By Corollary \ref{cor:algebraic-polarisation-linear}, Lemma \ref{lem:norm-comparison}, and the choice of $\psi$, there is a constant $C_0>0$, independent of $\varphi$, such that
\[
 |\nabla_{-K_X}E^\NA(\psi)|
 \leq C_0J^\NA(\psi)
 <2C_0c_J^{-1}q_{\mathbb T}(\varphi).
\]
If $\mathrm K^\beta(\varphi_{\mathcal T_\xi})\geq\sigma q_{\mathbb T}(\varphi)$, the required lower bound already holds. Otherwise, Lemma \ref{lem:distance-entropy} gives
\[
 d_1^\NA(\psi,\psi_\beta)
 \leq\frac{\sigma+2C_0c_J^{-1}}{\beta}
 q_{\mathbb T}(\varphi).
\]
Furthermore, \eqref{eq:reduced-norm-Lipschitz} gives
\[
 q_{\mathbb T}(\psi_\beta)
 \geq q_{\mathbb T}(\psi)-d_1^\NA(\psi,\psi_\beta)
 =q_{\mathbb T}(\varphi)-d_1^\NA(\psi,\psi_\beta),
\]
while $J^\NA(\psi)<2c_J^{-1}q_{\mathbb T}(\varphi)$. Theorem \ref{thm:mabuchi-comparison} therefore yields
\[
\begin{aligned}
 \mathrm K^\beta(\varphi_{\mathcal T_\xi})
 &\geq\sigma q_{\mathbb T}(\psi_\beta) -C\,d_1^\NA(\psi,\psi_\beta)^{1/2}J^\NA(\psi)^{1/2}\\
 &\geq\left(\sigma-
 \frac{\sigma(\sigma+2C_0c_J^{-1})}{\beta} -C\left(\frac{2c_J^{-1}(\sigma+2C_0c_J^{-1})}{\beta}\right)^{1/2}\right)
 q_{\mathbb T}(\varphi).
\end{aligned}
\]
Defining 
\[\varepsilon_\beta:=\frac{\sigma(\sigma+2C_0c_J^{-1})}{\beta}+C\left(\frac{2c_J^{-1}(\sigma+2C_0c_J^{-1})}{\beta}\right)^{1/2}\]
we have $\varepsilon_\beta\geq 0$ and $\varepsilon_\beta\to 0$. Thus we can find $\beta_0$ such that for all rational $\beta\geq \beta_0$ we have 
\[\mathrm K^\beta(\varphi_{\mathcal T_\xi})\geq \frac{\sigma}{2}q_{\mathbb T}(\varphi). \]
Taking the supremum over all rational product twists gives
\[
 \mathrm K_{\mathbb T,\mathrm{red}}^\beta(\varphi) \geq\frac\sigma2q_{\mathbb T}(\varphi),
\]
which proves \textup{(ii)}. The implication \textup{(ii)}$\Rightarrow$\textup{(iii)} is immediate.

It remains to prove \textup{(iii)}$\Rightarrow$\textup{(i)}. We fix $\beta>1$ and $\gamma>0$ as in \textup{(iii)}. For every $\mathbb T$-equivariant normal ample test-configuration metric, Lemma \ref{lem:reduced-Kbeta-basic} gives
\begin{equation}\label{eq:reduced-ample-input}
 \mathrm K^\NA(\varphi) \geq\mathrm K_{\mathbb T,\mathrm{red}}^\beta(\varphi) \geq\gamma q_{\mathbb T}(\varphi).
\end{equation}

We first pass this inequality to $\mathbb T$-equivariant model metrics. After taking a $\mathbb T\times\mathbb G_m$-equivariant smooth SNC resolution, \cite[Corollary A and its proof]{Tru26}, together with \cite[Lemma 4.5]{Li21}, provides $\mathbb T$-equivariant smooth ample metrics $\varphi_m$ such that $d_1^\NA(\varphi_m,\varphi)\longrightarrow0$ and $\Ent^\NA(\varphi_m)\longrightarrow\Ent^\NA(\varphi)$, i.e. that $\mathrm K^\NA(\varphi_m)\longrightarrow\mathrm K^\NA(\varphi)$ and $q_{\mathbb T}(\varphi_m)\longrightarrow q_{\mathbb T}(\varphi)$. Passing to the limit in \eqref{eq:reduced-ample-input} proves the same uniform inequality for all $\mathbb T$-equivariant model metrics.

Since $G$ is reductive and $T$ is maximal, the identity component of the centraliser of $T$ is $\mathbb T$. The model inequality obtained above is therefore precisely $\mathbb T$-uniform K-stability for models. By \cite[Lemma 8.17]{BJ26}, this gives uniform $T$-equivariant $\widehat{\mathrm K}$-polystability, and \cite[Corollary 8.18]{BJ26} then gives uniform $\widehat{\mathrm K}$-polystability.
\end{proof}

\begin{remark}
By \cite[Theorem A]{BJ26}, the conditions in Theorem \ref{thm:reduced-Kbeta-comparison} are also equivalent to the existence of a cscK metric. It is closely related to the reduced Yau--Tian--Donaldson correspondence of \cite[Theorem 10.4]{DZ26}.
\end{remark}

\subsection{Semistability thresholds}
\label{sec:k-ss}

We conclude by comparing K-semistability with $\mathrm K^\beta$-semistability.

For every normal ample test configuration $\mathcal T$, we define
\[
 \lambda_\beta(X,L) :=\inf_{\|\varphi_{\mathcal T}\|_1>0} \frac{\mathrm K^\beta(\varphi_{\mathcal T})}{\|\varphi_{\mathcal T}\|_1}, \qquad \lambda_\infty(X,L) :=\inf_{\|\varphi_{\mathcal T}\|_1>0} \frac{\mathrm K^\NA(\varphi_{\mathcal T})}{\|\varphi_{\mathcal T}\|_1}.
\]

If $\|\varphi_{\mathcal T}\|_1=0$, then $\varphi_{\mathcal T}$ is constant by Lemma \ref{lem:zero-norm}. The translation invariance of the algebraic formulas gives $\mathrm K^\beta(\varphi_{\mathcal T})=\mathrm K^\NA(\varphi_{\mathcal T})=0$. Consequently, $\mathrm K^\beta$-semistability is equivalent to $\lambda_\beta\geq0$, while K-semistability is equivalent to $\lambda_\infty\geq0$ by \cite[Proposition 8.2]{BHJ17}.

Since $X$ is smooth, we have $\Ent_{\mathcal T}^\beta\geq0$. Corollary \ref{cor:algebraic-polarisation-linear} and Lemma \ref{lem:norm-comparison} therefore give a constant $C_0>0$ such that
\[
 -C_0\leq\lambda_\beta\leq\lambda_\infty<+\infty.
\]

\begin{lemma}\label{lem:threshold-models}
Every log-discrepancy metric $\varphi_\beta$ satisfies
\begin{equation}\label{eq:model-optimal-lower-bound}
 \mathrm K^\NA(\varphi_\beta) \geq\lambda_\infty(X,L)\|\varphi_\beta\|_1.
\end{equation}
\end{lemma}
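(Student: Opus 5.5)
This is a direct application of Proposition \ref{prop:input} with $\Lambda:=\lambda_\infty(X,L)$. The plan is to check its hypothesis, namely
\[
\mathrm K^\NA(\psi)\geq\lambda_\infty\|\psi\|_1
\]
for every normal ample test-configuration metric $\psi$, and then read off \eqref{eq:model-optimal-lower-bound}.

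First, $\Lambda$ is an admissible real constant. The bounds recorded just before the lemma give $-C_0\leq\lambda_\beta\leq\lambda_\infty<+\infty$, so $\lambda_\infty\in\mathbb R$.

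Second, I verify the hypothesis by splitting into two cases according to $\|\psi\|_1$.

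\begin{enumerate}[label=\textup{(\alph*)}]
\item If $\|\psi\|_1>0$, the inequality is the definition of $\lambda_\infty$ as an infimum over such metrics. I should note that the infimum defining $\lambda_\infty$ runs over metrics $\varphi_{\mathcal T}$ of normal ample test configurations, which is exactly the class allowed in Proposition \ref{prop:input}.
\item If $\|\psi\|_1=0$, then $\psi$ is constant by Lemma \ref{lem:zero-norm}. Translation invariance then gives $\mathrm K^\NA(\psi)=0=\lambda_\infty\cdot 0$, as already observed in the discussion preceding the lemma.
\end{enumerate}

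Third, I apply Proposition \ref{prop:input}. Its proof approximates $\varphi_\beta$ in $d_1^\NA$ by ample metrics $\psi_m$ whose entropies converge to $\Ent^\NA(\varphi_\beta)$, using Trusiani's result. The strong continuity of $\nabla_{K_X}E^\NA$ and Lemma \ref{lem:distance-to-fixed-subset} then pass the inequality to the limit, giving \eqref{eq:model-optimal-lower-bound}.

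The only delicate point is that $\lambda_\infty$ may be negative. In that case a lower semicontinuity argument for $\mathrm K^\NA$ alone would not be enough, since we need convergence of both sides. Proposition \ref{prop:input}, however, supplies genuine convergence $\mathrm K^\NA(\psi_m)\to\mathrm K^\NA(\varphi_\beta)$ and $\|\psi_m\|_1\to\|\varphi_\beta\|_1$, and it was stated for arbitrary $\Lambda\in\mathbb R$. So the sign of $\lambda_\infty$ causes no difficulty, and there is no real obstacle beyond checking the constant case (b).
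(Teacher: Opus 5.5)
Your proposal is correct and is the paper's proof: you check the hypothesis of Proposition \ref{prop:input} with $\Lambda=\lambda_\infty(X,L)$ on all normal ample test-configuration metrics (by the definition of the infimum when $\|\psi\|_1>0$, and because constant metrics have $\mathrm K^\NA=0$ when $\|\psi\|_1=0$), then apply that proposition. One small correction to your closing remark: lower semicontinuity of $\mathrm K^\NA$ gives the inequality in the wrong direction whatever the sign of $\lambda_\infty$, so the entropy convergence supplied by Proposition \ref{prop:input} is needed in general, not only when $\lambda_\infty<0$.
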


\begin{proof}
By the definition of $\lambda_\infty$, the inequality
\[
 \mathrm K^\NA(\psi)
 \geq\lambda_\infty(X,L)\|\psi\|_1
\]
holds for every normal ample test-configuration metric $\psi$, including those of zero norm. Proposition \ref{prop:input}, applied with $\Lambda=\lambda_\infty(X,L)$, proves the assertion.
\end{proof}

\begin{theorem}\label{thm:threshold-convergence}
Suppose that $(X,L)$ is K-semistable. Then there exists $\varepsilon_\beta\geq 0$, with $\varepsilon_\beta\to 0$ as $\beta \to \infty$ such that
\begin{equation}\label{eq:threshold-error}
 \lambda_\infty-\varepsilon_\beta
 \leq\lambda_\beta\leq\lambda_\infty.
\end{equation}
In particular, $\lambda_\beta(X,L)\nearrow\lambda_\infty(X,L)$ as $\beta\to\infty$.
\end{theorem}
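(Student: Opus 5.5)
The upper bound $\lambda_\beta\leq\lambda_\infty$ is immediate. Proposition \ref{prop:Kbeta-dictionary} gives $\mathrm K^\beta(\varphi_{\mathcal T})\leq\mathrm K^\NA(\varphi_{\mathcal T})$ for every normal ample test configuration. Dividing by $\|\varphi_{\mathcal T}\|_1>0$ and taking infima over the same index set yields the inequality. The same monotonicity in $\beta$ from Proposition \ref{prop:Kbeta-dictionary} shows that $\beta\mapsto\lambda_\beta$ is nondecreasing on rational $\beta$. Hence the final assertion $\lambda_\beta\nearrow\lambda_\infty$ follows once the lower bound is established with $\varepsilon_\beta\to0$.

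For the lower bound I will combine Lemma \ref{lem:threshold-models} with Corollary \ref{cor:transfer}, taking $\Lambda:=\lambda_\infty(X,L)$. The K-semistability hypothesis is used exactly here. It gives $\lambda_\infty\geq0$, as noted just before Lemma \ref{lem:threshold-models} via \cite[Proposition 8.2]{BHJ17}, and $\lambda_\infty<+\infty$ by the displayed bound $-C_0\leq\lambda_\beta\leq\lambda_\infty<+\infty$. So $\Lambda$ is a finite nonnegative number, as Corollary \ref{cor:transfer} requires.

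Lemma \ref{lem:threshold-models} says that $\mathrm K^\NA(\varphi_\beta)\geq\Lambda\|\varphi_\beta\|_1$ for the log-discrepancy metric of every normal ample test configuration and every rational $\beta>1$. This is precisely the hypothesis of Corollary \ref{cor:transfer}. That corollary then produces
\[
\varepsilon_\beta=\frac{\Lambda(\Lambda+C_0)}{\beta}+C\Bigl(\frac{\Lambda+C_0}{c_J\beta}\Bigr)^{1/2}\geq0,
\]
which depends only on $(X,L)$, $\Lambda$ and $\beta$ and satisfies $\varepsilon_\beta=O(\beta^{-1/2})\to0$. It also gives $\mathrm K^\beta(\varphi_{\mathcal T})\geq(\lambda_\infty-\varepsilon_\beta)\|\varphi_{\mathcal T}\|_1$ for every $\mathcal T$. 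Dividing by $\|\varphi_{\mathcal T}\|_1$ when it is positive and taking the infimum gives $\lambda_\beta\geq\lambda_\infty-\varepsilon_\beta$.

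The only delicate point is the sign restriction $\Lambda\geq0$ in Corollary \ref{cor:transfer}. Its proof uses $\|\varphi_\beta\|_1\geq\|\varphi\|_1-d_1^\NA(\varphi,\varphi_\beta)$ multiplied by $\Lambda$, which is valid only when $\Lambda\geq0$. That is why K-semistability is assumed. Everything else is a direct assembly of earlier results, so I expect no genuine obstacle beyond checking that $\varepsilon_\beta$ does not depend on $\mathcal T$. This holds because $\Lambda$ is a fixed invariant of $(X,L)$.
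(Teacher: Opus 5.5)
Your proposal is correct and follows the paper's proof essentially verbatim. The upper bound $\lambda_\beta\leq\lambda_\infty$ and the monotonicity in $\beta$ come from the Darvas--Zhang comparison (Proposition \ref{prop:Kbeta-dictionary}), and the lower bound comes from feeding Lemma \ref{lem:threshold-models} into Corollary \ref{cor:transfer} with $\Lambda=\lambda_\infty\geq0$; you also correctly identify that K-semistability enters exactly to guarantee $\Lambda\geq0$ there.
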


\begin{proof}
Since $(X,L)$ is K-semistable, $\lambda_\infty(X,L)\geq0$. Let $\mathcal T$ be a normal ample test configuration with associated metric $\varphi_{\mathcal T}$ satisfying $\|\varphi_{\mathcal T}\|_1>0$, and let $\varphi_\beta$ be its log-discrepancy metric. By Lemma \ref{lem:threshold-models},
\[
 \mathrm K^\NA(\varphi_\beta)
 \geq\lambda_\infty(X,L)\|\varphi_\beta\|_1.
\]
The hypothesis of Corollary \ref{cor:transfer} therefore holds with $\Lambda=\lambda_\infty(X,L)$. It follows that there exists $\varepsilon_\beta\geq0$, independent of $\mathcal T$, such that $\varepsilon_\beta=O(\beta^{-1/2})$ and
\[
 \mathrm K^\beta(\varphi_{\mathcal T})
 \geq\bigl(\lambda_\infty(X,L)-\varepsilon_\beta\bigr)
 \|\varphi_{\mathcal T}\|_1.
\]
Dividing by $\|\varphi_{\mathcal T}\|_1>0$ and taking the infimum over all normal ample test configurations of positive norm yields
\[
 \lambda_\beta(X,L) \geq\lambda_\infty(X,L)-\varepsilon_\beta.
\]

On the other hand, \cite[Theorem 9.5]{DZ26} gives $\mathrm K^\beta(\varphi_{\mathcal T})\leq\mathrm K^\NA(\varphi_{\mathcal T})$, and hence $\lambda_\beta(X,L)\leq\lambda_\infty(X,L)$. The same theorem shows that $\mathrm K^\beta(\varphi_{\mathcal T})$ is increasing in $\beta$ for every $\mathcal T$, so $\lambda_\beta(X,L)$ is also increasing. Since $\varepsilon_\beta\to0$, the result follows.
\end{proof}

\begin{corollary}\label{cor:asymptotic-semistability}
There exists $C'=C'(X,L)>0$ such that every K-semistable $(X,L)$ satisfies $\mathrm K^\beta(\varphi_{\mathcal T})\geq-C'\beta^{-1/2}\|\varphi_{\mathcal T}\|_1$ for every normal ample test configuration and every rational $\beta>1$. Moreover:
\begin{enumerate}[label=\textup{(\roman*)}]
\item $(X,L)$ is K-semistable if and only if
$\liminf_{\beta\to\infty,\,\beta\in\mathbb Q}\lambda_\beta(X,L)\geq0$;
\item if $(X,L)$ is K-semistable but not uniformly K-stable, then $-C'\beta^{-1/2}\leq\lambda_\beta(X,L)\leq0$ and $\lambda_\beta(X,L)\nearrow0$;
\item if $(X,L)$ is uniformly K-stable, then
$\lambda_\beta(X,L)>0$ for all sufficiently large rational $\beta$.
\end{enumerate}
\end{corollary}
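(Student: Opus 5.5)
The plan is to reduce everything to Corollary \ref{cor:transfer} and Theorem \ref{thm:threshold-convergence}, tracking the explicit form of $\varepsilon_\beta$.

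\emph{The uniform bound.} Let $(X,L)$ be K-semistable, so $\lambda_\infty\geq0$. Lemma \ref{lem:threshold-models} gives $\mathrm K^\NA(\varphi_\beta)\geq\lambda_\infty\|\varphi_\beta\|_1\geq0$ for every log-discrepancy metric. We apply Corollary \ref{cor:transfer} with $\Lambda=0$, not with $\Lambda=\lambda_\infty$, so that the constant is independent of $\lambda_\infty$. With $\Lambda=0$ its formula reads $\varepsilon_\beta=C(C_0/(c_J\beta))^{1/2}$. Here $C$ comes from Theorem \ref{thm:mabuchi-comparison}, and $C_0,c_J$ come from Corollary \ref{cor:algebraic-polarisation-linear} and Lemma \ref{lem:norm-comparison}; all of them depend only on $(X,L)$. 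Setting $C':=C(C_0/c_J)^{1/2}$ yields $\mathrm K^\beta(\varphi_{\mathcal T})\geq -C'\beta^{-1/2}\|\varphi_{\mathcal T}\|_1$ for all $\mathcal T$ and all rational $\beta>1$. The case $\|\varphi_{\mathcal T}\|_1=0$ is trivial by translation invariance.

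\emph{Part (i).} Suppose $(X,L)$ is K-semistable. The bound above gives $\lambda_\beta\geq-C'\beta^{-1/2}$, so $\liminf\lambda_\beta\geq0$. Conversely, suppose $\liminf_{\beta}\lambda_\beta\geq0$. Proposition \ref{prop:Kbeta-dictionary} gives $\lambda_\beta\leq\lambda_\infty$ for every $\beta$, so $\lambda_\infty\geq\liminf\lambda_\beta\geq0$. This is K-semistability by \cite[Proposition 8.2]{BHJ17}, as recalled before Lemma \ref{lem:threshold-models}.

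\emph{Part (ii).} Here the only genuine point is to identify "not uniformly K-stable" with $\lambda_\infty\leq0$. By Remark \ref{rem: DF vs mabuchi}, uniform K-stability means $\mathrm K^\NA\geq\delta J^\NA$ on ample test configurations. By Lemma \ref{lem:norm-comparison}, $J^\NA$ and $\|\cdot\|_1$ are equivalent, so this is the same as $\lambda_\infty>0$. Hence K-semistable but not uniformly K-stable means exactly $\lambda_\infty=0$. We then get $-C'\beta^{-1/2}\leq\lambda_\beta\leq\lambda_\infty=0$. Monotone convergence $\lambda_\beta\nearrow0$ follows from Theorem \ref{thm:threshold-convergence}.

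\emph{Part (iii).} Uniform K-stability gives $\lambda_\infty>0$ by the same equivalence. Theorem \ref{thm:threshold-convergence}, or directly Corollary \ref{cor:transfer} with $\Lambda=\lambda_\infty$, gives $\lambda_\beta\geq\lambda_\infty-\varepsilon_\beta$ with $\varepsilon_\beta\to0$. Hence $\lambda_\beta\geq\lambda_\infty/2>0$ for all large rational $\beta$.

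The main obstacle is making sure $C'$ is genuinely independent of $\lambda_\infty$ and of $\mathcal T$. Using $\Lambda=0$ in Corollary \ref{cor:transfer} is what achieves this. One must check that its case split, which assumes $\mathrm K^\beta<\Lambda\|\varphi\|_1=0$, still yields $\beta d_1^\NA(\varphi,\varphi_\beta)\leq C_0\|\varphi\|_1$. It does, via Lemma \ref{lem:distance-entropy}.
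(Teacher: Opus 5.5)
Your proposal is correct and follows essentially the same route as the paper. The uniform bound comes from Corollary \ref{cor:transfer} fed by Lemma \ref{lem:threshold-models}; (i) uses $\lambda_\beta\le\lambda_\infty$; (ii) reduces to $\lambda_\infty=0$; and (iii) uses $\lambda_\infty>0$ together with Theorem \ref{thm:threshold-convergence}. The only difference is bookkeeping. You apply Corollary \ref{cor:transfer} with $\Lambda=0$, which makes $C'=C(C_0/c_J)^{1/2}$ independent of $\lambda_\infty$. The paper instead takes $\Lambda=\lambda_\infty$ and absorbs the resulting $\lambda_\infty$-dependent $O(\beta^{-1/2})$ term, using $\beta^{-1}\le\beta^{-1/2}$, into $C'(X,L)$.
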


\begin{proof}
Theorem \ref{thm:threshold-convergence} gives $\lambda_\beta\geq\lambda_\infty-O(\beta^{-1/2})$. Under K-semistability, $\lambda_\infty\geq0$, so after increasing the fixed constant this implies $\lambda_\beta\geq-C'\beta^{-1/2}$, which also gives the first displayed inequality.

For \textup{(i)}, K-semistability implies $\lambda_\beta\to\lambda_\infty\geq0$. Conversely, $\lambda_\beta\leq\lambda_\infty$ for every rational $\beta>1$, so a nonnegative lower limit forces $\lambda_\infty\geq0$, which is equivalent to K-semistability.

For \textup{(ii)}, K-semistability gives
$\lambda_\infty(X,L)\geq0$. Suppose that
$\lambda_\infty(X,L)>0$. Every normal ample test configuration $\mathcal T$ satisfies $\mathrm K^\NA(\varphi_{\mathcal T}) \geq\lambda_\infty(X,L)\|\varphi_{\mathcal T}\|_1$. By Lemma \ref{lem:norm-comparison},
\[
 \mathrm K^\NA(\varphi_{\mathcal T})
 \geq c_J\lambda_\infty(X,L) J^\NA(\varphi_{\mathcal T}).
\]
Since $c_J\lambda_\infty(X,L)>0$, $(X,L)$ is uniformly K-stable by \cite[Proposition 8.2]{BHJ17}, contrary to the assumption. Therefore $\lambda_\infty(X,L)=0$.

Theorem \ref{thm:threshold-convergence} now gives $\varepsilon_\beta \to 0$ with $-\varepsilon_\beta \leq\lambda_\beta(X,L)\leq0$, i.e. $\lambda_\beta(X,L)\nearrow0$. Using $\varepsilon_\beta=O(\beta^{-1/2})$ and increasing the constant if necessary, we obtain
\[
 -C'\beta^{-1/2} \leq\lambda_\beta(X,L)\leq0.
\]

For \textup{(iii)}, uniform K-stability gives a constant $\delta>0$
such that $\mathrm K^\NA(\varphi_{\mathcal T}) \geq\delta J^\NA(\varphi_{\mathcal T}) \geq\frac{\delta}{C_J}\|\varphi_{\mathcal T}\|_1$. Hence $\lambda_\infty(X,L)\geq\frac{\delta}{C_J}>0$. Since $\lambda_\beta\to\lambda_\infty>0$, we obtain $\lambda_\beta>0$ for all sufficiently large rational $\beta$.
\end{proof}

\begin{definition}[Asymptotic $\mathrm K^\beta$-semistability]\label{def:asymptotic-Kbeta-semistability}
We say that $(X,L)$ is \emph{asymptotically $\mathrm K^\beta$-semistable} if
\[
 \liminf_{\beta\to\infty,\,\beta\in\mathbb Q}
 \lambda_\beta(X,L)\geq0.
\]
\end{definition}

Corollary \ref{cor:asymptotic-semistability}\textup{(i)} says precisely that ordinary K-semistability is equivalent to asymptotic $\mathrm K^\beta$-semistability. Since $\widehat{\mathrm K}$-semistability implies ordinary K-semistability by restriction, equivalently by \cite[Proposition 8.13(i), and Corollary 8.15]{BJ26}, it also implies
\[
 \widehat{\mathrm K}\text{-semistability}
 \Longrightarrow
 \text{asymptotic }\mathrm K^\beta\text{-semistability}.
\]

\end{document}